\newtheorem{theorem}{Theorem}[section]
\newtheorem{lemma}[theorem]{Lemma}
\newtheorem{claim}[theorem]{Claim}
\newtheorem{proposition}[theorem]{Proposition}
\newtheorem{fact}[theorem]{Fact}
\newtheorem{Corollary}[theorem]{Corollary}
\theoremstyle{definition}
\newtheorem{definition}[theorem]{Definition}
\theoremstyle{remark}
\numberwithin{equation}{section}
\begin{document}

\title[Isomorphism of pointed minimal systems]{ Isomorphism of pointed minimal systems is not classifiable by countable structures}



\author{Ruiwen Li}
\address{Nankai university}
\curraddr{School of Mathematical Sciences and LPMC, Nankai University, Tianjin 300071, P.R. China}
\email{rwli@mail.nankai.edu.cn}

\author{Bo Peng}
\address{McGill university}
\curraddr{Department of Mathmatics and Statistics, McGill University. 805 Sherbrooke Street West Montreal, Quebec, Canada, H3A 2K6}
\email{bo.peng3@mail.mcgill.ca}
\thanks{}

\subjclass[]{}

\date{}

\dedicatory{}

\commby{}

\begin{abstract}
 We prove that the topological conjugacy relations both for minimal compact systems and pointed minimal compact systems are not Borel-reducible to any Borel $S_{\infty}$-action.
\end{abstract}

\maketitle


\bibliographystyle{amsplain}
\section{Introduction}

      Over the last thirty years, a descriptive set-theoretic complexity theory for equivalence relations defined on standard Borel spaces has been developed by Kechris, Louveau, Hjorth and others (see \cite{Ke} and \cite{Hjorth}).  Let $X$ and $Y$ be two Polish spaces. For equivalence relations $E$ on $X$ and $F$ on $Y$, if there exists a Borel map from $X$ to $Y$ such that $f(x)Ff(y)$ iff $xEy$, then we say $E$ is \textbf{Borel reducible} to $F$, denoted by $E\le_B F$. We write $E\sim_B F$ if both $E\leq_B F$ and $F\leq_B E$ hold. For two equivalence relations $E$ and $F$, if $E$ is Borel reducible to $F$, then we will regard $F$ as a more complicated equivalence relation. 
      
      However, given an arbitrary equivalence relation, it is hard to find the exact complexity of it among all equivalence relations. Some equivalence relations play a more important role in the complexity theory. We call them benchmarks. By comparing an equivalence relation with those benchmarks, we can have a better understanding of the complexity of the given equivalence relation. 

     The group $S_{\infty}$ is the group of permutations of $\mathbb{N}$ with the topology of pointwise convergence. We say that an equivalence relation is \textbf{classifiable by countable structures} if it is Borel reducible to an equivalence relation induced by a Borel $S_{\infty}$-action. An equivalence relation $E$ is \textbf{Borel-complete} if it is classifiable by countable structures and every equivalence relation which is classifiable by countable structures is Borel-reducible to $E$. For group action cases, Hjorth \cite{Hjorth} introduced the notion of \textbf{turbulent actions} and showed that equivalence relations induced by turbulent actions are not classifiable by countable structures. In dynamical systems, Foreman and Weiss \cite{FW} proved that the conjugacy action of the group of measure preserving transformations on itself is turbulent. Carmelo and Gao \cite{CG} proved that the conjugacy relation of Cantor systems is Borel-complete. Also, Farah, Toms and T$\ddot{{\rm o}}$rniquist \cite{FTT} proved that the isomorphism of separable simple AI algebras is not classifiable by countable structures. The complexity of conjugacy relation of Cantor minimal systems and ergodic measure preserving transformations are still open. A famous result of Foreman, Rudolph and Weiss \cite{FRW} shows that the conjugacy relation of all ergodic measure preserving transformations is not Borel. Based on their construction, a very recent result of Deka, Garc\'{i}a-Ramos, Kasprzak, Kunde and Kwietniak \cite{DGKKK} shows that the conjugacy relation of Cantor minimal systems is not Borel. 

      An equivalence relation is called an \textbf{orbit equivalence relation} if it can be induced by a Borel action of a Polish group. A complete orbit equivalence relation is an equivalence relation induced by a Polish group action such that all orbit equivalence relations are Borel reducible to it. A famous result of Sabok \cite{Sabok} shows that the affine homeomorphism relation of Choquet simplices is a complete orbit equivalence relation. Based on this result, Zielinski \cite{Zielinski} proved that even the affine homeomorphism of Bauer simplices is a complete orbit equivalence relation, see  \cite{CG2, Ciesla} for related results. As a direct consequence of this result, the conjugacy relation of compact systems is also complete, see \cite[Page 92]{Sabok1} for a simple reduction. In fact, Bruin and Vejnar \cite{BH} showed that the conjugacy relation of Hilbert cube systems is a complete orbit equivalence relation, see also \cite{KV} for a related result.
      
      There are two related results in topological dynamics. Williams \cite{Williams} proved that all Bauer simplices can be realized as the set of invariant measures of minimal compact systems and studied the invariant measure of \textbf{Toeplitz subshifts}. Downarowicz \cite{Downarowicz} generalized this result to all Choquet simplices and showed that all Choquet simplices can be realized as the set of invariant measures of Toplitz symbolic subshifts. In fact, it remains unclear what the complexity of the isomorphism relation on the Toeplitz subshifts is \cite[Question 1.3]{ST}. In the context of symbolic subshifts, the conjugacy relation is a countable equivalence relation, so it can not be a complete orbit equivalence relation. However, Williams's construction (Oxtoby subshifts in $\Sigma^{\mathbb{Z}}$ with an arbitrary compact metric space) \cite{Williams} gives arbitrary minimal compact systems. A natural question is the following: can we map a compact metric space $X$ to a minimal compact system whose set of invariant measures is $P(X)$ (all probablity measure supported on $X$ with weak* topology) in a Borel way such that the isomorphism type of this system is determined by the homeomorphism type of $X$?  This would imply that the conjugacy relation of minimal compact systems is complete in all orbit equivalence relations. However, as it turns out (see Theorem 5.1 and Theorem 7.2), the isomorphism of Oxtoby systems is a Borel equivalence relation so it can not be a complete orbit equivalence relation.

      Another interesting problem is the conjugacy of pointed minimal compact systems. Kaya \cite{Kaya} proved that the conjugacy relation for pointed Cantor minimal systems is Borel bireducible with $=^{+}_{\mathbb{R}}$ which stays quite low in the Borel reducibility hierarchy. It is natural to ask about the complexity for general pointed minimal compact systems. Our main result shows that in the setting of all minimal compact systems the complexity grows considerably:
      
      \begin{theorem}\label{PCM}
          The conjugacy relation of pointed minimal compact systems is not Borel-reducible to any Borel $S_{\infty}$-action.
      \end{theorem}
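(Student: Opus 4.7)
The plan is to invoke Hjorth's turbulence theorem: if we can exhibit a continuous Polish group action $G \curvearrowright X$ on a Polish space $X$ whose orbit equivalence relation Borel-reduces to pointed conjugacy, and which satisfies the turbulence conditions (every orbit is dense and meager, and every local orbit is somewhere dense), then the induced equivalence relation is not classifiable by countable structures, hence neither is pointed conjugacy. The task therefore splits into two largely independent parts: (i) build a Borel parametrization of pointed minimal compact systems by some Polish space $X$, equivariant for a natural Polish group action of some $G$; and (ii) verify the turbulence conditions for this action.

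For part (i), I would follow the Williams--Downarowicz paradigm hinted at in the introduction. Starting from a fixed ``skeleton'' minimal system (for instance, a well-chosen Toeplitz/odometer tower), I would attach to each parameter $p \in X$ an enrichment of the symbolic structure near the orbit of a distinguished point $x_0$. The parameter $p$ should be encoded asymptotically into the first-return pattern of $x_0$ into a designated clopen or open set, so that a conjugacy of the resulting pointed system is forced to preserve this pattern; meanwhile, parameters in the same $G$-orbit should give rise to pointed systems that are explicitly conjugate via a homeomorphism constructed from the group element. A natural candidate for $G$ is a turbulent Polish group from the dynamical or functional-analytic menu already cited in the introduction: the Foreman--Weiss conjugacy action on measure-preserving transformations, or an $\ell^p/c_0$-type summable-sequence action acting by perturbations of the encoded parameters.

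Part (ii) breaks down further. Density of orbits is typically routine from the encoding. Meagerness of orbits follows if the encoding respects a natural Baire-category structure coming from the turbulent source action. The hard step is somewhere-density of local orbits: one must show that, given a small perturbation budget in $G$, the reachable pointed systems fill an open set up to a comeager adjustment. This usually requires a genericity lemma showing that small modifications of the encoded parameter can be realized dynamically by homeomorphisms that move the basepoint within a controlled neighborhood. In parallel, one verifies the reduction is faithful by extracting a pointed-conjugacy invariant, namely the asymptotic first-return statistics of $x_0$, that recovers the $G$-orbit of $p$.

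The main obstacle will be balancing rigidity against flexibility. To make the reduction faithful we need the basepoint to rigidly encode its parameter, whereas to make the action turbulent we need enough self-conjugacies to guarantee that local orbits spread. The resolution I would attempt is to choose the skeleton minimal system with a large, well-understood group of self-conjugacies (for example, an equicontinuous factor with a rich topological centralizer) and to encode the parameter only in an asymptotic/tail manner at $x_0$, so that pointed conjugacies modulo tail equivalence have exactly the flexibility required for turbulence while the tail equivalence class of the encoded data remains a faithful invariant.
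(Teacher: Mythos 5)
Your proposal is a strategy outline rather than a proof, and the one step you defer --- making the encoding faithful --- is precisely where all the mathematical content of the paper lies. The overall shape is right: the paper also reduces a known turbulent orbit equivalence relation (the $c_0$-action on $(S^1)^{\mathbb{N}}$, whose turbulence is quoted from Gao's book, so your part (ii) is unnecessary if you pick from the ``menu'') into pointed conjugacy via a Toeplitz-type construction in the Williams style. But you never say what the reduction actually is, and you explicitly flag the rigidity/flexibility tension as ``the main obstacle'' without resolving it. That obstacle is not a technicality to be balanced by a clever choice of skeleton; it is the theorem one has to prove.

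Concretely, the paper's route is: (a) isolate the \emph{topological type} relation $E_{\rm tt}(X)$ on sequences (two sequences are equivalent iff exactly the same index subsequences converge) and prove that $(X,f,x)$ and $(Y,g,y)$ are conjugate as pointed systems iff the interleaved orbit sequences $(x,f(x),x,f^2(x),\dots)$ and $(y,g(y),y,g^2(y),\dots)$ are $E_{\rm tt}$-equivalent; (b) send a sequence $(x_i)$ in the Hilbert cube to the Oxtoby system generated by $(x_i)$ and a fixed fast growing period structure, pointed at the Oxtoby sequence itself, and prove that this is a reduction of $E_{\rm tt}$; (c) reduce the $c_0$-action to $E_{\rm tt}(S^1)$ by interleaving with a fixed dense sequence. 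Step (b) is the hard part: one must show that \emph{any} conjugacy between two such Oxtoby systems carries the generating Oxtoby sequence to a shift of the other (Theorem 5.1 of the paper), which is proved by a delicate combinatorial analysis of the periodic structure (the ``$p_j$-$p_i$-piece'' machinery, or alternatively the maximal periodic block analysis of Section 6). Your ``asymptotic first-return statistics'' invariant and ``tail equivalence'' resolution are not substitutes for this: without a rigidity theorem of this kind there is no argument that non-equivalent parameters yield non-conjugate pointed systems, and indeed the rigidity genuinely fails when the limit set of the parameter sequence is a singleton (the system degenerates to an odometer, where every point can be moved to every other by an automorphism), so the dichotomy has to be confronted, not finessed. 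As written, your proposal has a genuine gap at its core.
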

          
     As consequences of our construction, we get the following,

      \begin{theorem}\label{CM}
          The conjugacy relation of minimal compact systems is not Borel-reducible to any Borel $S_{\infty}$-action.
      \end{theorem}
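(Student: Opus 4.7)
The plan is to derive Theorem \ref{CM} directly from Theorem \ref{PCM} by showing that the Borel reduction witnessing Theorem \ref{PCM} can be arranged so that it also reduces the source equivalence relation to unpointed topological conjugacy. Write $f\colon Z \to \{\text{pointed minimal compact systems}\}$, $\alpha \mapsto (X_\alpha, T_\alpha, p_\alpha)$, for the Borel map produced in the proof of Theorem \ref{PCM}. I would aim to verify that for the specific systems in the image of $f$, pointed conjugacy and unpointed conjugacy coincide; the forgetful map $(X_\alpha,T_\alpha,p_\alpha) \mapsto (X_\alpha,T_\alpha)$ is trivially Borel, so this would give the desired reduction from a non-classifiable equivalence relation to unpointed conjugacy of minimal compact systems.

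The mechanism for this coincidence is rigidity: the distinguished point $p_\alpha$ should be determined, up to the automorphism group of $(X_\alpha,T_\alpha)$, by some purely dynamical feature of the unpointed system. Then any unpointed conjugacy $(X_\alpha, T_\alpha) \to (X_\beta, T_\beta)$ can be composed with an automorphism on one side to carry $p_\alpha$ to $p_\beta$, producing a pointed conjugacy. Such a feature can be built into the construction for Theorem \ref{PCM} in one of two standard ways: either $p_\alpha$ is already the unique point enjoying a dynamically definable property (for instance, the unique fibre of cardinality one under a canonical almost one-to-one factor map, or the unique point at which a canonical continuous cocycle attains a distinguished value), or one post-composes with an almost one-to-one extension that canonically marks the orbit of $p_\alpha$, for example by doubling exactly the points of that orbit while preserving minimality and compactness.

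The main obstacle is that minimal systems have no fixed points and no isolated orbits, so the distinguished point cannot be picked out by any naive invariant of the dynamics. The construction witnessing Theorem \ref{PCM} must therefore carry enough additional structure to single out $p_\alpha$ among the orbits of $(X_\alpha, T_\alpha)$, and verifying this characterization is where the core work lies. Once that is in hand, preservation of minimality, compactness, and Borelness of the assignment $\alpha \mapsto (X_\alpha, T_\alpha)$ under the forgetful (or the optional almost one-to-one extension) step is routine, and the theorem follows by composition with the non-classifiable equivalence relation provided in the proof of Theorem \ref{PCM}.
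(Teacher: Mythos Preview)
Your proposal is correct and matches the paper's approach: the paper's reduction (Theorem~\ref{R}) lands in the class of Oxtoby systems, and the key rigidity result Theorem~\ref{Main1} shows that any conjugacy between two such systems sends the distinguished Oxtoby sequence to a shift of the other, so pointed and unpointed conjugacy coincide exactly as you outline. The paper therefore proves Theorems~\ref{PCM} and~\ref{CM} simultaneously from Theorem~\ref{R} and Theorem~\ref{Turbulent}, with the rigidity you identify as the ``main obstacle'' being precisely the content of Theorem~\ref{Main1}.
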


      \begin{theorem}\label{FCM}
          The flip conjugacy relation of minimal compact systems is not Borel-reducible to any Borel $S_{\infty}$-action.
      \end{theorem}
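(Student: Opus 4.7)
The plan is to deduce Theorem~\ref{FCM} from Theorem~\ref{CM} by showing that the Borel reduction used to prove Theorem~\ref{CM} can be arranged so that every minimal compact system in its image is conjugate to its own inverse. Granting this, flip conjugacy and conjugacy coincide on the range of the reduction, and the same Borel map then witnesses that flip conjugacy is not Borel reducible to any Borel $S_\infty$-action.

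To make this precise, write $C$ for the conjugacy relation and $F$ for the flip conjugacy relation on minimal compact systems. Each $F$-class is by definition $[X,T]_C\cup[X,T^{-1}]_C$. Let $E$ be an equivalence relation that is not classifiable by countable structures, and let $f$ be the Borel reduction of $E$ into $C$ produced in the proof of Theorem~\ref{CM}. If $f(y)$ is always conjugate to its own inverse, then $f(x)\,F\,f(y)$ means $f(x)\,C\,f(y)$ or $f(x)\,C\,f(y)^{-1}$, and in the second case the self-inverse hypothesis gives $f(x)\,C\,f(y)$ as well. Hence $x\,E\,y$ is equivalent to $f(x)\,F\,f(y)$, making $f$ a Borel reduction of $E$ into $F$. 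Non-classifiability of $E$ by countable structures then yields the theorem.

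To secure the self-inverse hypothesis, I would analyze the Oxtoby-type construction of Theorem~\ref{CM} and produce, for each parameter $x$, an explicit homeomorphism $\sigma_x\colon X_x\to X_x$ satisfying $\sigma_x\circ T_x\circ\sigma_x^{-1}=T_x^{-1}$. For Oxtoby subshifts coded by a compact metric space, the natural candidate is the sequence-reversal map $\sigma(y)(n)=y(-n)$, which is an isomorphism from the subshift to its inverse whenever the family of admissible words is closed under word reversal. If the construction of Theorem~\ref{CM} does not immediately supply such palindromic combinatorial data, I would modify it by interleaving each admissible block with its reverse, or by passing to a joining of $(X_x,T_x)$ with $(X_x,T_x^{-1})$ over a common maximal equicontinuous factor, so that the resulting minimal system is manifestly isomorphic to its own inverse while still encoding the parameter $x$ in a way that separates the original $E$-classes.

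The main obstacle is to verify that the symmetrization preserves both minimality and the injectivity of the induced map on $E$-classes. This amounts to rechecking the invariants distinguishing non-isomorphic systems in the proof of Theorem~\ref{CM} against the slightly modified construction; no essentially new idea is expected to be required, but it is the one step where the details of the construction underlying Theorem~\ref{CM} genuinely enter.
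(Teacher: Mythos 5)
Your proposal is correct and is essentially the paper's argument: the paper notes that Oxtoby systems are conjugate to their own inverses (the Oxtoby sequence satisfies $z(n)=z(-n-1)$, so $z^{-1}=S^{-1}z$ and the reversal map you propose works with no modification), hence flip conjugacy and conjugacy coincide on the range of the reduction from $E_{\rm tt}([0,1]^{\mathbb{N}})$. The fallback symmetrization you describe is unnecessary, since the construction already places its blocks symmetrically at positions $\equiv -1$ or $0$ modulo $p_{i+1}/p_i$.
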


      To get those results, we study the isomorphism relation restricted to Oxtoby systems which will be defined in Section \ref{Toepliz}. The isomorphism type of Oxtoby systems is intrinsically connected with \textbf{topological type} of sequences which will be introduced in Section \ref{TT relation}. We will map a sequence in the Hilbert cube to an Oxtoby system such that two sequences have the same topological type if and only if the associated Oxtoby systems are conjugate. The main technical part is Theorem \ref{Main1} and we provide two different proofs to that theorem in Section \ref{proof 1} and \ref{proof 2}. In Section \ref{Borel reduction}, we show that one can reduce $c_0$ to topological type relation of sequences, which gives us Theorem \ref{PCM}, Theorem \ref{CM} and Theorem \ref{FCM}. Then we talk about some estimations about the complexity of the topological type relation in Section \ref{Ding Gu} and Section \ref{s9}.
      
\section{Preliminaries}

     In this paper, we will use the following notation. Let $X$ be a set. For an interval $I=[a,b]$ with $a,b\in \mathbb{Z}$ and $z\in X^{\mathbb{Z}}$. By $z[a,b]$, we mean the sequence $(z(a),z(a+1), \dots, z(b))$. We will refer to $z[a,b]$ as to an \textbf{interval}. Similar notation will be used for open and half open half closed intervals. 
     
     By a \textbf{compact system}, we mean a pair $(X,f)$ where $X$ is a compact metric space and $f$ is a self-homeomorphism of $X$. In this paper, by a system or a dynamical system, we always mean a compact system. A system is called \textbf{minimal} if it does not contain any proper invariant closed subset. In other words, every orbit $\{S^n(x):n\in\mathbb{Z}\}$ denoted by $O(x)$ is dense in $X$, where $x$ is a point in $X$. By a \textbf{subsystem} of $(X,T)$, we mean a closed $T$-invariant subset of $X$. Two systems $(X,f),(Y,g)$ are said to be \textbf{conjugate} (or \textbf{isomorphic}) if there is a homeomorphism $h:X\rightarrow Y$ such that $h \circ f= g\circ h$. We call such $h$ an \textbf{isomorphism} between $(X,f)$ and $(Y,g)$.  It is easy to check that this is an equivalence relation for minimal compact systems. Also, two systems $(X,f)$ and $(Y,g)$ are $\textbf{flip conjugate}$ if either $(X,f)$ and $(Y,g)$ or $(X,f^{-1})$ and $(Y,g)$ are conjugate. A \textbf{factor map} from $(X,f)$ to $(Y,g)$ is a continuous surjection $\pi$ from $X$ to $Y$, such that $\pi \circ f=g\circ \pi$. 

     We say that a dynamical syatem $(Y,S)$ is \textbf{equicontinuous}  if the function class $\{S^n:n\in\mathbb{Z}\}$ is equicontinuous. Every minimal flow $(X,T)$ admits the unique \textbf{maximal equicontinuous factor} system $(Y,S)$ together with a factor map $f$ from $(X,T)$ to $(Y,S)$ such that for any other equicontinuous system $(Z,R)$ together with a factor map $g$ from $(X,T)$ to $(Z,R)$, there is a factor map $h$ from $(Y,S)$ to $(Z,R)$ such that $g=h\circ f$. For more details on this, see \cite{Ellis} and \cite{Williams}.
     
     Let $X$ be a compact metric space. By a \textbf{subshift} we mean a subsystem of $(X^{\mathbb{Z}},S)$ where $S$ denotes the left shift action, i.e. 
    $$
    S(x)(n)=x(n+1),\,\, \forall n\in \mathbb{Z}.
    $$

    To study the complexity of the conjugacy relation of minimal compact systems, we need to encode all minimal compact systems as elements of a standard Borel space. We will consider the minimal subsystems of $(([0,1]^{\mathbb{N}})^{\mathbb{Z}},S)$. 
       Indeed, for every minimal compact system $(X,f)$, we can find an embedding $i$ from $X$ to $[0,1]^{\mathbb{N}}$. Thus, we can embed $(X,f)$ to $(([0,1]^{\mathbb{N}})^{\mathbb{Z}},S)$ by $e$ as:
    $$
    e(x)(n) = i(f^n(x)).
    $$
    Consider the system $(e(X),S))$. This system is obviously conjugate to $(X,f)$. Thus, we can view all minimal compact systems as minimal subsystems of $(([0,1]^{\mathbb{N}})^{\mathbb{Z}},S)$. By considering the Vietoris topology, we can topologize the space of all minimal compact systems. Also, it is well-known that the the set of all minimal subshifts is Borel which provides a standard Borel structure.

    A \textbf{pointed minimal compact system} is a minimal compact system $(X,f)$ together with a point $x\in X$, written as $(X,f,x)$. Two pointed minimal compact systems $(X,f,x)$ and $(Y,g,y)$ are \textbf{conjugate} if there is an isomorphism from $(X,f)$ to $(Y,g)$ sending $x$ to $y$. To topologize the space of all pointed minimal compact systems, we need to consider the subspace topology of the product space  $\mathcal{K}(([0,1]^{\mathbb{N}})^{\mathbb{Z}})\times ([0,1]^{\mathbb{N}})^{\mathbb{Z}}$, where $\mathcal{K}(([0,1]^{\mathbb{N}})^{\mathbb{Z}})$ is the class of compact subsets of $([0,1]^{\mathbb{N}})^{\mathbb{Z}}$ with the Vietoris topology. The set of all minimal compact systems is known to be a Borel subset of $\mathcal{K}(([0,1]^{\mathbb{N}})^{\mathbb{Z}})$ (see \cite[Page 547]{Foreman}). From this it easily follows the set of pointed minimal compact systems is a Borel subset of $\mathcal{K}(([0,1]^{\mathbb{N}})^{\mathbb{Z}})\times ([0,1]^{\mathbb{N}})^{\mathbb{Z}}$. For the convenience of the reader we present a simple folklore proof (which also appears in \cite[Lemma 2.8]{DGKKK}):

    \begin{fact}
        \begin{enumerate}
            \item The set of all minimal subsystems of $(([0,1]^{\mathbb{N}})^{\mathbb{Z}},S)$ is a $G_{\delta}$ subset of $\mathcal{K}(([0,1]^{\mathbb{N}})^{\mathbb{Z}})$.
            \item The set of all pointed minimal subsystems of $(([0,1]^{\mathbb{N}})^{\mathbb{Z}},S)$ is a $G_{\delta}$ subset of $\mathcal{K}(([0,1]^{\mathbb{N}})^{\mathbb{Z}})\times ([0,1]^{\mathbb{N}})^{\mathbb{Z}}$.
        \end{enumerate} 
       
    \end{fact}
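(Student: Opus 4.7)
The plan is to exhibit each of the two sets as an explicit countable intersection of open sets in the Vietoris topology. Throughout, let $Y:=([0,1]^{\mathbb{N}})^{\mathbb{Z}}$ and fix a countable basis $\{V_i\}_{i\in\mathbb{N}}$ of its topology. Recall that the Vietoris topology on $\mathcal{K}(Y)$ is generated by the sub-basic open sets $\{K:K\cap V\neq\emptyset\}$ and $\{K:K\subseteq W\}$ for $V,W$ open in $Y$, and that because $Y$ is compact metric, Vietoris convergence coincides with Hausdorff-metric convergence.

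For part (1), I would first handle invariance and non-emptiness: since $S$ is a homeomorphism, $K\mapsto S(K)$ is continuous on $\mathcal{K}(Y)$, so $\{K:S(K)=K\}$ is closed, while $\{K:K\neq\emptyset\}=\{K:K\cap Y\neq\emptyset\}$ is open. To capture minimality, I would use the standard observation that a non-empty $S$-invariant compact $K\subseteq Y$ is minimal iff for every basic open $V_i$ meeting $K$ there is a finite $N$ such that $W_{N,i}:=\bigcup_{|n|\le N}S^{-n}(V_i)$ covers $K$; this uses compactness of $K$ to reduce the cover $\bigcup_{n\in\mathbb{Z}}S^{-n}(V_i)\supseteq K$ (guaranteed by density of every orbit) to a finite subcover. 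Equivalently, for each $i$, either $K\cap V_i=\emptyset$ or there exists $N$ with $K\subseteq W_{N,i}$. The first alternative defines a closed subset of $\mathcal{K}(Y)$; the second defines the open set $\bigcup_N\{K:K\subseteq W_{N,i}\}$. The complement of their union is (open)$\cap$(closed), which is $F_\sigma$ since open sets in a metrizable space are $F_\sigma$; hence each disjunction is $G_\delta$, and intersecting over all $i$ keeps the class $G_\delta$.

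For part (2), the pointed case reduces to part (1). The projection $\mathcal{K}(Y)\times Y\to\mathcal{K}(Y)$ is continuous, so the preimage of the $G_\delta$ set from part (1) is $G_\delta$ in the product. It remains to intersect with $\{(K,x):x\in K\}$, which is closed: if $(K_n,x_n)\to(K,x)$ with $x_n\in K_n$, Hausdorff convergence gives $d(x,K)\le d(x,x_n)+d(x_n,K)\to 0$, so $x\in K$. The only substantive step in the whole argument is the equivalence of minimality with the basis-level covering condition; once that is in hand, the rest is routine bookkeeping with Vietoris sub-basic sets and the elementary fact that open sets in metric spaces are $F_\sigma$.
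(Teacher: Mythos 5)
Your proposal is correct and follows essentially the same route as the paper: both characterize minimality of an invariant compact set by the condition that every basic open set meeting it admits a finite cover of the whole set by iterates, and then read off the Borel complexity clause by clause over a countable basis. The paper merely states the resulting formula and calls it "a $G_{\delta}$ condition," whereas you spell out the bookkeeping (closed $\cup$ open has $F_\sigma$ complement, hence each clause is $G_\delta$) and the reduction of (2) to (1) via the closedness of $\{(K,x):x\in K\}$; these are the same argument at different levels of detail.
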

        
    \begin{proof}
         (1) Let $(X,S)$ be a subsystem of $(([0,1]^{\mathbb{N}})^{\mathbb{Z}},S)$. Fix a countable basis $\{U_i\}$ of $([0,1]^{\mathbb{N}})^{\mathbb{Z}}$. It is routine to check that $(X,S)$ is subsytem of $(([0,1]^{\mathbb{N}})^{\mathbb{Z}},S)$ if and only if 
         $$
               \forall i\in \mathbb{N}\,(U_i \cap X \neq \emptyset \Leftrightarrow S(U_i) \cap X \neq \emptyset).            $$
               And it is minimal subsystem if and only if
               $$
               \forall i\in \mathbb{N}\,(U_i \cap X \neq \emptyset \Leftrightarrow S(U_i) \cap X \neq \emptyset)\wedge
               \forall i\in \mathbb{N}\,(U_i \cap X \neq \emptyset \Rightarrow \exists\, n\in \mathbb{N}\,\, X\subset \bigcup_{k=0}^n S^k U_i).               
               $$
          which is a $G_{\delta}$ condition. (2) follows easily from (1).
    \end{proof}

       The conjugacy relation of pointed minimal compact system is a Borel equivalence relation \cite{Kaya2}. The conjugacy relation of minimal compact system is not Borel \cite{DGKKK}.
\section{Toeplitz systems and Oxtoby systems}\label{Toepliz}
    We will consider the Toeplitz systems considered by Williams \cite{Williams}. Let $X$ be a compact metric space and $z \in X^{\mathbb{Z}}$ be a sequence. Let $p$ be 
 a natural number, we define ${\rm Per}_{p}(z)=\{n\in \mathbb{Z}|\forall m\equiv n({\rm mod}\,p)\, z(m)=z(n)\}$ and let ${\rm Per}(z)=\bigcup_{n\in \mathbb{N}} {\rm Per}_n(z)$. We call ${\rm Per}(z)$ the \textbf{periodic part} of $z$. Also, we let ${\rm Aper}(z)=\mathbb{Z}\setminus {\rm Per}(z)$.
    \begin{definition}
        Let $X$ be a compact metric space. A sequence $z \in X^{\mathbb{Z}}$ is a \textbf{Toeplitz sequence} if ${\rm Per}(z)=\mathbb{Z}$. A \textbf{Toeplitz system} is the orbit closure (the closure of its orbit in $X^\mathbb{Z}$) of some Toeplitz sequence with the shift map. 
    \end{definition}

    It is well-known that Toeplitz systems are minimal.
    We say $p$ is an \textbf{essential period} of $z$, if ${\rm Per}_p(z) \neq {\rm Per}_q(z) $ for all $q<p$. We will only consider \textbf{non-periodic} Toeplitz sequences, that is, sequences $z$ satisfying
    $\forall n\in \mathbb{Z}, S^nz \neq z$. 
    By a \textbf{period structure} of a non-periodic Toeplitz sequence $z$, we mean a sequence of natural numbers $(p_i)$ such that

    \begin{enumerate}
        \item $p_i$ is an essential period of $z$.
        \item $p_i\mid p_{i+1}$.
        \item  $\bigcup_{i\in \mathbb{N}} {\rm Per}_{p_i}(z)=\mathbb{Z}$.
    \end{enumerate}
   
    Note that every non-periodic Toeplitz sequence has a period structure and the period structure is not uniquely determined. 
    

    Now let $z$ be a non-periodic Toeplitz sequence and $(p_i)$ be its period structure. We will use the same notation as in \cite{Williams}. Let 
    $$
    A^i_n=\{S^mz|m\equiv n (\mbox{mod}\,p_i)\}
    $$ 
    where $0 \leq n < p_i$. Denote the orbit closure of $z$ by $\overline{O}(z)$. By the \textbf{$p$-skeleton} of $x\in \overline{O}(z)$, we will mean the subsequence of $x$ restricted to ${\rm Per}_{p}(x)$. In other words, when we say $x$ and $y$ have the same $p$-skeleton, we mean ${\rm Per}_{p}(x)={\rm Per}_p(y)$ and $x|_{{\rm Per}_{p}(x)}=y|_{{\rm Per}_{p}(y)}$.
    We will use the following lemma by Williams,
    
    \begin{lemma}\label{W1}
        \rm (Williams), \cite[Lemma 2.3]{Williams}. Let $X$ be a compact metric space, $z$ be a non-periodic Toeplitz sequence in $X^{\mathbb{Z}}$ and $(p_i)$ be the period structure of $(\overline{O}(z),S)$. Then for all $i\in \mathbb{N}$, $\{\overline{A^i_n}\},n=0,1,\dots,p_i-1$ is a partition of $\overline{O}(z)$ into clopen sets. Also, $A^i_n\subset A^j_m$ for $j<i,,m\equiv n$(mod$\,p_j$) and $SA^i_n=A^i_{n+1}$.
    \end{lemma}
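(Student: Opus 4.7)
The plan is to dispatch the two bookkeeping items first and then concentrate on the topological partition statement, since clopen-ness follows formally from pairwise disjointness of the closures. The equivariance $SA^i_n=A^i_{n+1}$ is immediate from the definition, and the nesting $A^i_n\subset A^j_m$ for $j<i$, $m\equiv n\pmod{p_j}$ holds because $p_j\mid p_i$ when $j<i$, so any $k\equiv n\pmod{p_i}$ automatically satisfies $k\equiv n\equiv m\pmod{p_j}$. Moreover, the sets $A^i_0,\dots,A^i_{p_i-1}$ are pairwise disjoint as subsets of the orbit: an identity $S^m z=S^{m'}z$ would force $z$ to have period $|m-m'|$, contradicting non-periodicity. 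Since closure commutes with finite unions, $\overline{O}(z)=\bigcup_{n=0}^{p_i-1}\overline{A^i_n}$.

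For the crucial disjointness of the closures, I will exploit the $p_i$-skeleton. Write $T:={\rm Per}_{p_i}(z)\bmod p_i\subseteq\mathbb{Z}/p_i\mathbb{Z}$ and define $\tau_i\colon T\to X$ by $\tau_i(r):=z(r)$ (well-defined by $p_i$-periodicity of $z$ on ${\rm Per}_{p_i}(z)$). The essentiality of $p_i$ will be used via the following rigidity statement: for no $0<c<p_i$ is $T$ $c$-invariant with $\tau_i(r+c)=\tau_i(r)$ for all $r\in T$; such a $c$ would yield ${\rm Per}_d(z)\supseteq{\rm Per}_{p_i}(z)$ for $d:=\gcd(c,p_i)<p_i$, hence equality, contradicting the essentiality of $p_i$. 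Complementarily, if $x=S^m z\in A^i_n$ (so $m\equiv n\pmod{p_i}$) and $(j+n)\bmod p_i\in T$, then $x(j)=\tau_i((j+n)\bmod p_i)$; this closed-coordinate condition passes to the closure, so every $y\in\overline{A^i_n}$ is completely determined on the residue classes in $T-n$, and in particular $(T-n)\subseteq{\rm Per}_{p_i}(y)\bmod p_i$.

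Now suppose for contradiction $y\in\overline{A^i_n}\cap\overline{A^i_{n'}}$ with $c:=(n'-n)\bmod p_i\in(0,p_i)$. If $T$ is $c$-invariant, then $T-n=T-n'$, and the two determinations of $y$ on this common residue set give $\tau_i(r+c)=\tau_i(r)$ for all $r\in T$, in direct contradiction to the rigidity above. Otherwise $T-c\ne T$, and a direct count gives $|(T-n)\cup(T-n')|=|T\cup(T-c)|>|T|$, whence $g(y)>|T|=g(z)$, where $g(x):=|{\rm Per}_{p_i}(x)\bmod p_i|$. Since each condition ``$n_0\in{\rm Per}_{p_i}(\cdot)$'' cuts out a closed subset of $X^{\mathbb{Z}}$, the function $g$ is a finite sum of indicators of closed sets and is therefore upper semicontinuous; it is also $S$-invariant. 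By minimality of $(\overline{O}(z),S)$ choose $(l_k)$ with $S^{l_k}y\to z$; then $|T|=g(z)\geq\limsup_k g(S^{l_k}y)=g(y)>|T|$, the desired contradiction. Hence the closures are pairwise disjoint, and each $\overline{A^i_n}$ is clopen as the complement in $\overline{O}(z)$ of the finite closed union $\bigcup_{n'\neq n}\overline{A^i_{n'}}$.

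The delicate step is the disjointness of the closures; everything rests on converting essentiality of $p_i$ into either a forbidden symmetry of $\tau_i$ or a forbidden increase in the $p_i$-periodic-residue count under the upper-semicontinuous $S$-invariant $g$.
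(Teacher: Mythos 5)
The paper itself does not prove this lemma; it is quoted from Williams, so there is no in-text proof to compare against, and your argument must stand on its own — which it does. The routine parts ($SA^i_n=A^i_{n+1}$, the nesting from $p_j\mid p_i$, disjointness of the $A^i_n$ within the orbit via non-periodicity, and the covering of $\overline{O}(z)$ by finitely many closures) are all fine, and the substance is where you put it: disjointness of the closures. Your Case A correctly turns a common point of $\overline{A^i_n}$ and $\overline{A^i_{n'}}$ with $c$-invariant skeleton support $T$ into a translation symmetry of the skeleton, and the rigidity step is sound once one adds the standard (and implicitly used) reverse inclusion ${\rm Per}_d(z)\subseteq{\rm Per}_{p_i}(z)$ for $d\mid p_i$, so that the conclusion is the equality ${\rm Per}_d(z)={\rm Per}_{p_i}(z)$ contradicting essentiality. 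Case B is the genuinely nonroutine point, and your device of the $S$-invariant, upper semicontinuous count $g(x)=|{\rm Per}_{p_i}(x)\bmod p_i|$, combined with minimality of Toeplitz systems (which the paper records independently of this lemma, so there is no circularity), handles it cleanly: a point lying in two closures with misaligned skeleton supports would carry strictly more $p_i$-periodic residues than $z$, which semicontinuity along an orbit returning to $z$ forbids. This is in the spirit of the classical Jacobs--Keane/Williams arguments, which likewise propagate the shifted $p_i$-skeleton to the closure and then exploit essentiality; your packaging of the misaligned case via the semicontinuous counting function is a tidy alternative to chasing the two skeletons back to $z$ by hand.
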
 

    Fix a sequence of natural numbers $(p_i)$ such that $p_i\mid p_{i+1}$. Let $G=\varprojlim \mathbb{Z}_{p_i}$. In other words,
    $$
    G=\{(n_i)\in\prod_i \mathbb{Z}_{p_i}|\forall j<i,\, n_i \equiv n_j ({\rm mod}\,p_j)\},
    $$
    topologized by the subspace topology of the product space $\prod_i \mathbb{Z}_{p_i}$.
    Denote the element $(1,1,\dots)$ by $\Bar{1}$. Note that $G$ is a subgroup of $\prod_i \mathbb{Z}_{p_i}$, thus there is a natural automorphism $\widehat{1}$ on $G$:
    $$
    \widehat{1}(g)=g+\Bar{1}.
    $$

     Suppose $(\overline{O}(z),S)$ is a Toeplitz system with perodic structure $(p_i)$. Then $(G,\widehat{1})$ is the maximal equicontinuous factor of $(\overline{O}(z),S)$ by \cite[Theorem 2.2]{Williams}. Note that $G$ is determined by the period structure of $z$.
    Let $\pi :(\overline{O}(z),S) \rightarrow (G,\widehat{1})$ be defined so that
    $\pi^{-1}((n_i))= \bigcap_i \overline{A^i_{n_i}}$. By Lemma \ref{W1}, this is well defined. Note that $\pi$ is a factor map. We will call this $\pi$ the \textbf{canonical factor map} of $(\overline{O}(z),S)$. 

    Let $X$ be a compact metric space. Let $z_1$ and $z_2$ be two non-periodic Toeplitz sequences in $X^{\mathbb{Z}}$ with the same period structure. Suppose there is an isomorphism $f$ between $(\overline{O}(z_1),S)$ and $(\overline{O}(z_2),S)$. Let $(G_1,\widehat{1})$ and $(G_2,\widehat{1})$ be the maximal equicontinuous factors of $(\overline{O}(z_1),S)$ and $(\overline{O}(z_2),S)$. Let $\pi_1$ and $\pi_2$ be the canonical factor maps of $(\overline{O}(z_1),S)$ and $(\overline{O}(z_2),S)$ respectively. Since they have the same period structure, we know that $G_1=G_2$. By the definition of the maximal equicontinuous factor, there will be an isomorphism $h:G_1\rightarrow G_2$ such that $h\circ \pi_1=\pi_2\circ f$.

    \begin{lemma}\label{EQF}
        Let $X$ be a compact metric space and $z$ be a non-Topelitz sequence in $X^{\mathbb{Z}}$. Let $G$ be the maximal equicontinuous factor of $(\overline{O}(z),S)$ and $\pi: (\overline{O}(z),S) \rightarrow (G,\widehat{1})$  be the canonical factor map. Then 
       \begin{itemize}
           \item[(i)] for any $g\in G$, every element in $\pi^{-1}(g)$ has the same periodic and non-periodic part,
           \item[(ii)] if $y$ is a Toeplitz sequence in $\overline{O}(z)$, then $\pi^{-1}(\pi(y))=\{y\}$.
       \end{itemize}
    \end{lemma}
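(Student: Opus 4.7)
The plan is to produce, for each $g = (n_i) \in G$, a set $P(g) \subseteq \mathbb{Z}$ and a partial function $v_g \colon P(g) \to X$ such that $\pi^{-1}(g) = \{x \in \overline{O}(z) : x|_{P(g)} = v_g\}$ and ${\rm Per}(x) = P(g)$ for every $x \in \pi^{-1}(g)$. Both parts then follow immediately: (i) because any two fiber elements share the same periodic positions and agree on them via $v_g$, and (ii) because a Toeplitz $y$ has ${\rm Per}(y) = \mathbb{Z}$, forcing $P(\pi(y)) = \mathbb{Z}$ and a singleton fiber.

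Fix integer representatives $\tilde n_i \equiv n_i \pmod{p_i}$ and set
\[
P(g) := \bigcup_i \bigl({\rm Per}_{p_i}(z) - \tilde n_i\bigr), \qquad v_g(k) := z(k + \tilde n_i) \text{ for any } i \text{ with } k + \tilde n_i \in {\rm Per}_{p_i}(z).
\]
The coherence $\tilde n_{i+1} \equiv \tilde n_i \pmod{p_i}$, together with $p_i \mid p_{i+1}$, ${\rm Per}_{p_i}(z) \subseteq {\rm Per}_{p_{i+1}}(z)$, and the $p_i$-periodicity of $z$ on ${\rm Per}_{p_i}(z)$, makes the union increasing, $v_g$ well defined, and both independent of the choice of representatives. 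For the easy inclusion $P(g) \subseteq {\rm Per}(x)$ with $x|_{P(g)} = v_g$, Lemma \ref{W1} and a diagonal argument give, for any $x \in \pi^{-1}(g)$, integers $m_t$ with $m_t \equiv \tilde n_i \pmod{p_i}$ for all $i \le t$ and $S^{m_t} z \to x$. For $k \in {\rm Per}_{p_i}(z) - \tilde n_i$ and $t \ge i$, both $k + m_t$ and $k + \tilde n_i$ lie in ${\rm Per}_{p_i}(z)$ and in the same $p_i$-residue class, so $z(k + m_t) = z(k + \tilde n_i)$; passing to the limit gives $x(k) = v_g(k)$, and applying the same identity at $k + p_i \ell$ yields $k \in {\rm Per}_{p_i}(x)$.

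The hard part, and the main obstacle, is the reverse inclusion ${\rm Per}(x) \subseteq P(g)$. Given $k \in {\rm Per}_q(x)$, one must produce some $i$ with $k + \tilde n_i \in {\rm Per}_{p_i}(z)$. The strategy is to combine the pointwise convergences $z(k + qj + m_t) \to x(k + qj) = x(k)$ for every $j \in \mathbb{Z}$ together with $z(k + m_t) \to x(k)$; the fact that $\bigcup_i {\rm Per}_{p_i}(z) = \mathbb{Z}$, so each $k + m_t$ has a smallest index $i(t)$ with $k + m_t \in {\rm Per}_{p_{i(t)}}(z)$; and the essentiality of the $p_i$. If $i(t) \to \infty$, then within each $z$ one can locate arbitrarily long windows around $k + m_t$ witnessing failure of $p_{i(t) - 1}$-periodicity at $k + m_t$; transferring these windows through $S^{m_t} z \to x$ would plant arbitrarily long aperiodic blocks in $x$ arbitrarily close to $k$, contradicting $q$-periodicity at $k$. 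Hence $i(t)$ is bounded by some fixed $i_0$, giving $k + m_t \in {\rm Per}_{p_{i_0}}(z)$ for some $t$, i.e.\ $k \in {\rm Per}_{p_{i_0}}(z) - \tilde n_{i_0} \subseteq P(g)$.

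With ${\rm Per}(x) = P(g)$ and $x|_{P(g)} = v_g$ established for every $x \in \pi^{-1}(g)$, parts (i) and (ii) of the lemma follow as indicated in the first paragraph.
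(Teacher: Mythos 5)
Your strategy is considerably more ambitious than the paper's. The paper disposes of the lemma in two lines: by Lemma \ref{W1}, every element of $\pi^{-1}(g)=\bigcap_i\overline{A^i_{n_i}}$ inherits the $p_i$-skeleton of $S^{n_i}z$ for every $i$, which gives (i), and a Toeplitz point is determined by its skeletons, which gives (ii). You instead set out to prove the exact identity ${\rm Per}(x)=P(g)=\bigcup_i\bigl({\rm Per}_{p_i}(z)-\tilde n_i\bigr)$ for every $x\in\pi^{-1}(g)$. Your ``easy inclusion'' ($P(g)\subseteq{\rm Per}(x)$ together with $x|_{P(g)}=v_g$) is correct and is essentially the paper's argument written out in full. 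The problem is the reverse inclusion, which you yourself flag as the main obstacle and for which you offer only a sketch that does not go through.

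Concretely: when $i(t)\to\infty$, a witness to $k+m_t\notin{\rm Per}_{p_{i(t)-1}}(z)$ is a position $k+m_t+jp_{i(t)-1}$ with $j\neq0$, hence lies at distance at least $p_{i(t)-1}\to\infty$ from $k+m_t$. The convergence $S^{m_t}z\to x$ controls only a fixed finite window for each coordinate, so these witnesses simply escape to infinity and nothing ``transfers'' to $x$ --- that is exactly the mechanism by which $k$ becomes a hole of $x$ rather than a visibly aperiodic position. Furthermore, the intended contradiction is not one: $k\in{\rm Per}_q(x)$ constrains $x$ only along the progression $k+q\mathbb{Z}$ and says nothing about positions merely near $k$, so ``arbitrarily long aperiodic blocks close to $k$'' (even if you could plant them) are perfectly compatible with $q$-periodicity at $k$. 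What the reverse inclusion actually requires is an analysis of the values of $x$ along the entire progression $k+q\mathbb{Z}$: a hole $k$ lies in ${\rm Per}_q(x)$ precisely when $x$ is constant on that progression, and excluding this needs structural information about which values the skeleton takes there (the kind of information Lemmas \ref{CS} and \ref{L3} later extract for Oxtoby sequences); your sketch never engages with this. Since both (i) and (ii) in your write-up are routed through the unproved identity --- (i) via ``same periodic positions $=P(g)$'' and (ii) via ``${\rm Per}(y)=\mathbb{Z}$ forces $P(\pi(y))=\mathbb{Z}$'' --- the proposal is incomplete at its central step.
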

    \begin{proof}
        (i) Suppose $g=(n_i)$ and $\pi^{-1}(g)= \bigcap_i \overline{A^i_{n_i}}$. By the definition of $A^i_n$, all elements in $\overline{A^i_n}$ have the same $p_i$-period structure as $S^nz$ and this holds for all $i$.

        (ii) Suppose $y\in \overline{O}(z)$ is a Toeplitz sequence. Then all positions of $y$ are periodic. Therefore, only the element $y$ itself can have the same $p_i$-skeleton as $y$ for all $i$.
    \end{proof}

     \noindent \textbf{Notation}. Given a Topelitz system $(\overline{O}(z),S)$ and its maximal equicontinuous factor $(G,\widehat{1})$, for $g\in G$, we will use notations ${\rm Per}(g)$ and ${\rm Aper}(g)$ to denote the periodic part and non-periodic part of any elements in $\pi^{-1}(g)$. This is well-defined by Lemma \ref{EQF}.

     \vspace{0.5em}
     The following lemma is well-known but we prove it for completeness.
     \begin{lemma}\label{non-Toeplitz exist}
         Let $X$ be a compact metric space. Any infinite Topelitz system $(Y,S)$ in $(X^{\mathbb{Z}},S)$ contains a non-Toeplitz sequence.
     \end{lemma}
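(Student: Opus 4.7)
The plan is to argue by contradiction: assume that every element of $Y$ is Toeplitz, and derive that $Y$ is finite, contradicting the hypothesis. Since $Y$ is infinite, the generating Toeplitz sequence must be non-periodic, so the framework of Section \ref{Toepliz} applies: fix a period structure $(p_i)$, set $G=\varprojlim \mathbb{Z}_{p_i}$, and consider the canonical factor map $\pi\colon Y\to G$. Under the contradiction hypothesis, Lemma \ref{EQF}(ii) applies to every $y\in Y$, giving $\pi^{-1}(\pi(y))=\{y\}$. Hence $\pi$ is an injective continuous map between compact Hausdorff spaces, therefore a homeomorphism conjugating $(Y,S)$ with $(G,\widehat{1})$.

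Next, I transport the zeroth coordinate along this conjugacy. Define $\varphi\colon G\to X$ by $\varphi(g)=\pi^{-1}(g)(0)$; this is continuous, and shift-equivariance yields $\pi^{-1}(g)(n)=\varphi(g+n\Bar{1})$ for every $n\in\mathbb{Z}$. The requirement that $\pi^{-1}(g)$ be Toeplitz at position $0$ then becomes: there exists $p\in\mathbb{N}$ with $\varphi(g)=\varphi(g+kp\Bar{1})$ for all $k\in\mathbb{Z}$. Letting $A_p\subseteq G$ denote the closed subgroup generated by $p\Bar{1}$, continuity of $\varphi$ upgrades this to $\varphi$ being constant on the whole coset $g+A_p$. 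A short group-theoretic computation shows that the image of $A_p$ in $\mathbb{Z}_{p_i}$ has index $\gcd(p,p_i)$, which stabilizes at a divisor of $p$; hence $G/A_p$ is a finite cyclic group and $A_p$ is clopen in $G$.

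Define $V_p=\{g\in G:\varphi|_{g+A_p}\text{ is constant}\}$, clopen in $G$. The previous paragraph shows that, under the contradiction hypothesis, $G=\bigcup_p V_p$. The family is monotone under divisibility: if $p\mid q$ then $A_q\subseteq A_p$, so $V_p\subseteq V_q$. Extracting a finite subcover by compactness and taking $p^*$ to be the least common multiple of its indices produces a single $p^*$ with $V_{p^*}=G$, i.e., $\varphi$ factors through the finite quotient $G/A_{p^*}$. Therefore every $y=\pi^{-1}(g)\in Y$ is periodic with period $d=|G/A_{p^*}|$, and minimality of $Y$ forces it to be a single finite orbit of size at most $d$, contradicting $Y$ infinite.

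The chief technical point is the passage from a pointwise Toeplitz condition (one period $p$ per $g$) to a uniform one (a single $p^*$ working for all $g\in G$). The key enabling observation is that $A_p$ always has finite index in the pro-cyclic group $G$, which makes each $V_p$ clopen and lets the compactness argument combine the local periods into a common one.
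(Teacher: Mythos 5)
Your proof is correct, but it takes a genuinely different route from the paper's. The paper argues directly: since $Y$ is infinite, for each $i$ the set $\mathbb{Z}\setminus{\rm Per}_{p_i}(y)$ is nonempty (hence infinite), so one picks $n_i\notin{\rm Per}_{p_i}(y)$, passes to a convergent subsequence of $(S^{n_i}y)$, and observes that the limit $y'$ has $0\notin{\rm Per}_{p_i}(y')$ for every $i$ — a two-line compactness/diagonal argument that exhibits the non-Toeplitz point explicitly. You instead argue by contradiction through the maximal equicontinuous factor: if every point were Toeplitz, Lemma \ref{EQF}(ii) makes $\pi$ a conjugacy with the odometer $(G,\widehat{1})$, and then you must upgrade the pointwise Toeplitz condition (a period $p$ depending on $g$) to a uniform period; your key observation that each $A_p$ has finite index in $G$, so the sets $V_p$ are clopen and a compactness argument plus the divisibility monotonicity yields a single $p^*$, is the right way to close that gap, and the conclusion that $Y$ would then be a finite periodic orbit is sound. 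The trade-off: the paper's argument is shorter, constructive, and needs no structure theory; yours is longer and leans on Lemma \ref{EQF} and the group structure of $G$, but it proves the stronger structural statement that a Toeplitz system all of whose points are Toeplitz is conjugate to an odometer and in fact forced to be periodic, which is of some independent interest.
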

     \begin{proof}
         Take a periodic structure $(p_i)$ of $(Y,S)$ and a Topelitz sequence $y\in Y$. Since $Y$ is infinite, for any $i$, there exists infinitely many natural numbers $n$ such that $n\not \in {\rm Per}_{p_i}(y)$. Let $(n_i)$ be an increasing sequence of natural numbers such that $n_i\not \in {\rm Per}_{p_i}(y)$, by compactness and taking a subsequence, we may assume $(S^{n_i}y)$  converges to $y'$. Since $0\not \in {\rm Per}_{p_i}(S^{n_j}y)$ for all $j\geq i$, we have $0\not \in {\rm Per}_{p_i}(y')$ for all $i$. Thus, $0\not \in {\rm Per}(y')$.
     \end{proof}
     Now let us introduce \textbf{Oxtoby systems}. Subshifts generated by Oxtoby sequences were also studied in \cite{Williams} where Williams proved every compact metric space could be realized as the set of ergodic measures of such a minimal compact system. To define such systems, first we let $X$ be a compact metric space, $(x_i)$ be a countable subset of $X$.   
    
    \vspace{0.5em}
    
    \begin{definition}
        Let $(p_i)$ be a sequence of natural numbers such that:

          (0) $p_0=1$.
          
          (1) $p_i | p_{i+1}$,  $ \forall i\in \mathbb{N}$.
          
          (2) $p_1\geq 3$ and $\forall i$, $\frac{p_{i+1}}{p_i} \geq 3$. 

          We call such sequences of $(p_n)$ a \textbf{fast growing} sequence.
    \end{definition}
          
          A sequence $z\in X^{\mathbb{Z}}$ is called an \textbf{Oxtoby sequence} (generated by a fast growing sequence $(p_i)$ and $(x_i)$) if it is defined by the the following inductive construction: In the first step, we set $z(n)=x_1$ for all $n\equiv -1$ or $0$ mod $p_1$. For $i\in \mathbb{N}$ and $k\in \mathbb{Z}$ we let $J(i,k)$ denote the set of $n\in [kp_i, (k+1)p_i)$ for which $z(n)$ has not been defined at the end of the $i^{\rm th}$ step. The $(i+1)^{\rm th}$ step is to set $z(n)=x_{i+1}$ for $n\in J(i,k)$ with $k\equiv -1$ or $0$ mod $\frac{p_{i+1}}{p_i}$. By (1) and (2), $z$ is well defined and Toeplitz. 
       
           Denote such a sequence by $z((p_i),(x_i))$ and denote its orbit closure by $\overline{O}(z((p_i),(x_i)))$.

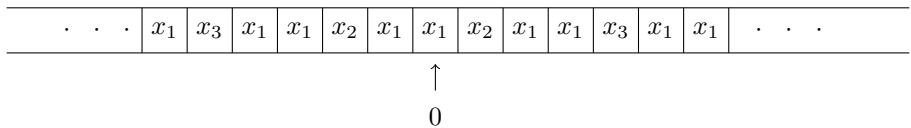
\begin{figure}[h!]
\centering
\begin{tikzpicture}
\usetikzlibrary{math}

\usetikzlibrary{decorations.pathreplacing}

\node (x0) at (-3 * 0.4, 0.0)    {$\cdot$} ;
\node (x0) at (-2 * 0.4, 0.0)    {$\cdot$} ;
\node (x0) at (-1 * 0.4, 0.0)    {$\cdot$} ;
\node (00) at ( 0 * 0.4, 0.0)    {} ;
\node (01) at ( 1 * 0.4, 0.0)    {} ;
\node (02) at ( 2 * 0.4, 0.0)    {} ;
\node (03) at ( 0.1, 0.0)    {$x_1$} ;
\node (04) at ( 0.7, 0.0)    {$x_3$} ;
\node (05) at ( 1.3, 0.0)    {$x_1$} ;
\node (06) at ( 1.9, 0.0)    {$x_1$} ;
\node (07) at ( 2.5, 0.0)    {$x_2$} ;
\node (08) at ( 3.1, 0.0)    {$x_1$} ;
\node (09) at ( 3.7, 0.0)    {$x_1$} ;
\node (10) at ( 4.3, 0.0)    {$x_2$} ;
\node (11) at ( 4.9, 0.0)    {$x_1$} ;
\node (12) at ( 5.5, 0.0)    {$x_1$} ;
\node (13) at ( 6.1, 0.0)    {$x_3$} ;
\node (14) at ( 6.7, 0.0)    {$x_1$} ;
\node (15) at ( 7.3, 0.0)    {$x_1$} ;
\node (16) at (16 * 0.4, 0.0)    {} ;
\node (17) at (17 * 0.4, 0.0)    {} ;
\node (18) at (18 * 0.4, 0.0)    {} ;
\node (19) at (19 * 0.4, 0.0)    {} ;
\node (20) at (20 * 0.4, 0.0)    {$\cdot$} ;
\node (20) at (21 * 0.4, 0.0)    {$\cdot$} ;
\node (20) at (22 * 0.4, 0.0)    {$\cdot$} ;
\node      at (3.7, -1.15) {0}       ;
\draw[-] (-5 * 0.4, 0.3) -- (25 * 0.4, 0.3) ;
\draw[-] (-5 * 0.4, -0.3) -- (25 * 0.4, -0.3) ;
\draw[-] (2.2, -0.3) -- (2.2, 0.3) ;
\draw[-] (3.4, -0.3) -- (3.4, 0.3) ;
\draw[-] (5.8, -0.3) -- (5.8, 0.3) ;
\draw[-] (4.6, -0.3) -- (4.6, 0.3) ;
\draw[-] (7, -0.3) -- (7, 0.3) ;
\draw[-] (1, -0.3) -- (1, 0.3) ;
\draw[-] (-0.2, -0.3) -- (-0.2, 0.3) ;
\draw[-] (2.8, -0.3) -- (2.8, 0.3) ;
\draw[-] (4, -0.3) -- (4, 0.3) ;
\draw[-] (5.2, -0.3) -- (5.2, 0.3) ;
\draw[-] (6.4, -0.3) -- (6.4, 0.3) ;
\draw[-] (0.4, -0.3) -- (0.4, 0.3) ;
\draw[-] (1.6, -0.3) -- (1.6, 0.3) ;
\draw[-] (7.6, -0.3) -- (7.6, 0.3) ;
\draw[->] (3.7, -0.3-0.5) -- (3.7, 0.3-0.75) ;

\end{tikzpicture}
\caption{An Oxtoby sequence with $p_n=3^n$}
\end{figure}

           \vspace{0.5em}

       Note that every Oxtoby sequence is a Toeplitz sequence. Oxtoby systems have a lot of interesting properties, for example, all aperiodic positions of a non-Toeplitz sequence in such a system are occupied by the same element in $X$.  More precisely, we have the following:
        \vspace{0.5em}
        
        \begin{lemma} \label{CS}
            {\rm (Williams, \cite[Lemma 3.3]{Williams})}  Let $X$ be a compact metric space, $(x_i)$ be a sequence of elements in $X$ and $(p_i)$ be a fast growing sequence. Let $z((p_i),(x_i))$ be an Oxtoby sequence and $y\in \overline{O}(z((p_i),(x_i)))$, then for all $n,m\in {\rm Aper}(y)$, we have $y(n)=y(m)$.
        \end{lemma}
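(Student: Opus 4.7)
The statement is vacuous when $y$ is Toeplitz, so I will assume $y$ is non-Toeplitz and fix $n,m \in \operatorname{Aper}(y)$ with $n<m$. I pick integers $n_j$ with $S^{n_j}z\to y$ and set $g=(g_i)=\pi(y)\in G$, so that $n_j\equiv g_i\pmod{p_i}$ for every $j\ge J(i)$ for some threshold $J(i)$. Because $y(k)=\lim_j z(k+n_j)$ coordinatewise, it suffices to show $z(n+n_j)=z(m+n_j)$ for every sufficiently large $j$. Aperiodicity of $n$ and $m$ gives $n+g_i,\,m+g_i\notin\operatorname{Per}_{p_i}(z)$ for every $i$, which translates to: both $n+n_j$ and $m+n_j$ remain unfilled after step $i$ of the Oxtoby construction whenever $j\ge J(i)$.

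Let $B_i\in[0,p_i)$ be the representative of $g_i$. The crucial observation is that $n+n_j$ and $m+n_j$ lie in a common $p_i$-block iff no multiple of $p_i$ belongs to the interval $(B_i+n,\,B_i+m]$---a condition depending only on $g_i$. My plan splits according to whether some $i=i_0$ realizes this same-block condition. If one does, then I prove by induction up the Oxtoby hierarchy starting at $i_0$ that any two positions both unfilled after step $i'$ and sharing a $p_{i'}$-block are filled at the same subsequent step $s$ with the same value $x_s$: the filling criterion at step $i'+1$ depends only on the common block index $K$ modulo $p_{i'+1}/p_{i'}$, so at each level either both are filled simultaneously with $x_{i'+1}$ or both stay together in $J(i'+1,K')$ for a common $K'$, and the recursion must terminate since $z$ is Toeplitz. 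This immediately yields $z(n+n_j)=z(m+n_j)$ for $j\ge J(i_0)$, and hence $y(n)=y(m)$ upon taking limits.

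If no such $i_0$ exists, then for every sufficiently large $i$ there is $k_i\in\{n+1,\dots,m\}$ with $g_i\equiv -k_i\pmod{p_i}$. Since this set is finite, pigeonhole combined with the profinite coherence $g_{i'}\equiv g_i\pmod{p_{i'}}$ (for $i'\le i$) forces some fixed $k^*\in\{n+1,\dots,m\}$ to satisfy $g_i\equiv -k^*\pmod{p_i}$ for \emph{every} $i$. Thus $g$ is the image in $G$ of the integer $-k^*$, equivalently $g=\pi(S^{-k^*}z)$; since $S^{-k^*}z$ is Toeplitz, Lemma~\ref{EQF}(ii) gives $\pi^{-1}(g)=\{S^{-k^*}z\}$, so $y=S^{-k^*}z$ is Toeplitz, contradicting $n\in\operatorname{Aper}(y)$. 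The main obstacle is the inductive step in the same-block case: it succeeds precisely because the Oxtoby construction is designed so that the step at which a position is filled is determined level-by-level by the \emph{block indices} and not by the internal position within a block; the pigeonhole plus profinite argument eliminating the remaining case is routine once Lemma~\ref{EQF}(ii) is available.
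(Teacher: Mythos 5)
The paper does not actually prove this lemma --- it is quoted from Williams \cite[Lemma 3.3]{Williams} and used as a black box --- so there is no in-paper proof to compare against. Your argument is correct and self-contained. The two halves both check out: in the ``same block'' case, the key invariant of the Oxtoby construction is exactly as you say --- whether $J(i',K)$ gets filled at step $i'+1$ depends only on $K$ modulo $p_{i'+1}/p_{i'}$, and if it is not filled then $J(i',K)\subset J(i'+1,K')$ for the unique $K'$ with $[Kp_{i'},(K+1)p_{i'})\subset[K'p_{i'+1},(K'+1)p_{i'+1})$ (this is the content of the paper's Lemma \ref{OL1}), so two positions sharing an unfilled block stay together until they are filled simultaneously with the same $x_s$; termination follows from $z$ being Toeplitz. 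In the complementary case, the pigeonhole on the finite set $\{n+1,\dots,m\}$ together with the coherence $g_{i'}\equiv g_i\ (\mathrm{mod}\ p_{i'})$ does force $g$ to be the image of an integer, and Lemma \ref{EQF}(ii) then makes $y$ a shift of $z$, contradicting ${\rm Aper}(y)\neq\emptyset$. Two small points worth tightening in a write-up: (a) your threshold $J(i)$ collides notationally with the hole sets $J(i,k)$; (b) the step from $n\notin{\rm Per}_{p_i}(y)$ to ``$n+n_j$ is unfilled after step $i$'' should record explicitly that ${\rm Per}_{p_i}(y)\supseteq {\rm Per}_{p_i}(z)-g_i$ (same $p_i$-skeleton as $S^{g_i}z$) and that the set of positions unfilled after step $i$ is $p_i$-periodic. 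Your route is close in spirit to the paper's Lemma \ref{Will1}, which shows quantitatively that aperiodic positions of $S^{n_i}z$ inside the window $(-n_i,p_i-n_i)$ all carry the value $x_{i+1}$; your version works with an arbitrary approximating sequence $S^{n_j}z\to y$ rather than the canonical one, which avoids having to argue that every point of $\pi^{-1}(g)$ is a limit of the specific sequence $(S^{n_i}z)$.
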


       We need the following lemma to study isomorphism type of Oxtoby systems which will be used in Definition $\ref{DF1}$.

       \begin{lemma}\label{OL1}
           Let $X$ be a compact metric space, $(x_i)$ be a sequence of elements in $X$ and $(p_i)$ be a fast growing sequence, $z((p_i),(x_i))$ is an Oxtoby sequence. Let $j\geq i\in \mathbb{N}$ and $k,k'\in \mathbb{Z}$. If $[kp_i,(k+1)p_i)\subset [k'p_j,(k'+1)p_j)$, then either $[kp_i,(k+1)p_i)\subset {\rm Per}_{p_j}(z((p_i),(x_i)))$ or $J(j,k')\cap [kp_i,(k+1)p_i)=J(i,k)$.
       \end{lemma}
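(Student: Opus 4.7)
My plan is to run an induction on $m\in\{i,i+1,\dots,j\}$ tracking the set of positions in $[kp_i,(k+1)p_i)$ still undefined after step $m$ of the Oxtoby construction, and to show that this set is, at every stage, either all of $J(i,k)$ or empty. Applied at $m=j$, this yields the two alternatives of the lemma.

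Before the induction I would record two elementary observations. First, the hypothesis $[kp_i,(k+1)p_i)\subset[k'p_j,(k'+1)p_j)$ forces $k\in\{k'(p_j/p_i),\dots,(k'+1)(p_j/p_i)-1\}$, and more generally, since $p_i\mid p_m$ for every $m\geq i$, the small block is contained in the unique level-$m$ block $[k_m p_m,(k_m+1)p_m)$ with $k_m=\lfloor kp_i/p_m\rfloor$. Second, positions only pass from undefined to defined during the construction, so the inclusion $J(j,k')\cap[kp_i,(k+1)p_i)\subset J(i,k)$ is automatic.

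The inductive step is where the dichotomy is preserved. By the definition of the construction, step $m+1$ alters the values inside the level-$m$ block indexed by $k_m$ only when $k_m\equiv -1$ or $0\pmod{p_{m+1}/p_m}$, and in that event it fills the \emph{entire} set $J(m,k_m)$ with $x_{m+1}$. Consequently, if the intersection $J(m,k_m)\cap[kp_i,(k+1)p_i)$ was empty it remains empty, while if it equaled $J(i,k)$ it either survives intact (when step $m+1$ does not touch this level-$m$ block) or is wiped out completely (when step $m+1$ fills it). In either subcase the all-or-nothing dichotomy is inherited at level $m+1$.

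Setting $m=j$ gives the lemma: if $J(j,k')\cap[kp_i,(k+1)p_i)=J(i,k)$, we are in the second alternative. Otherwise the intersection is empty, meaning every position of $[kp_i,(k+1)p_i)$ has been assigned at some step $\ell\leq j$ with value $x_\ell$; since step $\ell$ fills its positions in a $p_\ell$-periodic pattern and $p_\ell\mid p_j$, every such position lies in ${\rm Per}_{p_j}(z((p_i),(x_i)))$, which is the first alternative. I do not anticipate any real obstacle; the only thing requiring care is the indexing bookkeeping when passing between levels, which is controlled by the divisibility $p_m\mid p_{m+1}$ built into the definition of a fast growing sequence.
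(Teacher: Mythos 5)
Your proof is correct and follows essentially the same route as the paper's: both run an induction over the construction steps from $i$ to $j$ and rest on the same key observation that at each step a block's undefined set $J(m,k_m)$ is either filled in its entirety or left untouched, so the undefined part of $[kp_i,(k+1)p_i)$ is always either all of $J(i,k)$ or empty. The only cosmetic difference is that you deduce the first alternative position-by-position from the $p_\ell$-periodicity of the step-$\ell$ filling pattern, whereas the paper deduces it from containment of the interval in a fully filled region $[-p_j+mp_{j+1},p_j+mp_{j+1})$; both are immediate from the definition of the Oxtoby sequence.
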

        \begin{proof}
              Note that given $i\leq j$, since $p_i\mid p_j$ , for every $k$, there is a unique $k'$ such that $[kp_i,(k+1)p_i)\subset [k'p_j,(k'+1)p_j)$ holds. We prove the statement of the lemma by induction on $j$. We start with $j=i$. Note that our assumption $[kp_i,(k+1)p_i)\subset [k'p_j,(k'+1)p_j)$, implies that $k=k'$, thus we are done. Now we are proceed to the induction step. Suppose the statement of the lemma is true for $j\geq i$. In the $(j+1)^{\rm th}$ step, note that since $p_i\mid p_j$ and $p_i\mid p_{j+1}$, an interval $[kp_i,(k+1)p_i)$ is either contained in $[-p_{j}+mp_{j+1},p_{j}+mp_{j+1})$ for some $m\in \mathbb{Z}$  or $z[kp_i,(k+1)p_i)$ remains the same as what it was in the $j^{\rm th}$ step. In the first case,  we get $[kp_i,(k+1)p_i)\subset {\rm Per}_{p_{j+1}}(z((p_i),(x_i)))$ by the definition of the Oxtoby sequence. In the second case, we get either $[kp_i,(k+1)p_i)\subset {\rm Per}_{p_j}(z((p_i),(x_i)))$ or $J(j,k')\cap [kp_i,(k+1)p_i)=J(i,k)$ by the induction hypothesis.
        \end{proof}
        
        \section{Topological type of sequences}\label{TT relation}
       
         The following definition is classical.
        \begin{definition}
            The \textbf{$\omega$-limit set} of a sequence $(x_i)$ of elements of a compact metric space $X$, denoted by $L((x_n))$, is the set of limits of sequences $(x_{n_k})$ where $(n_k)$ is a subsequence of $\mathbb{N}$. In other words, 
            $$
            L((x_n))=\{ x\in  X | \exists\,\,{\rm an\,\,increasing}\,\, (n_k)\,\,x={\rm lim}_k x_{n_k}\}.
            $$
        \end{definition}
         The following definition is new.
        \begin{definition}
            Two sequences $(x_n)$ and $(x_n')$ are said to have the same \textbf{topological type} if for every increasing sequence of natural numbers $(n_k)$,
         $$
         (x_{n_k})\,\,{\rm converges}\,\,{\rm iff}\,\,(x'_{n_k})\,\,{\rm converges}.
         $$
         \end{definition}
        This can be regraded as an equivalence relation on the space $X^{\omega}$. We denote the topological type equivalence relation by $E_{{\rm tt}}(X)$. From the definition, this is a $\Pi^1_1$ equivalence relation, actually it is also Borel which will be proved in Section \ref{Borel reduction} (Theorem \ref{BR}). In Section \ref{Borel reduction}(Theorem \ref{R}), we will also prove that $E_{{\rm tt}}([0,1]^{\mathbb{N}})$ is  Borel reducible to the isomorphism relation of minimal compact systems. The following facts are routine to check but important.

        \begin{fact}\label{TT implies homeo}
            Let $(x_i)$ and $(y_i)$ be two sequences in a compact metric space $X$. If $(x_i)E_{\rm tt}(X)(y_i)$, then the function $f: L((x_i))\rightarrow L((y_i))$ such that for any increasing sequence of natural numbers $(n_k)$
            $$
            {\rm lim}_k x_{n_k}\mapsto {\rm lim}_ky_{n_k}
            $$
             is well defined and is a homeomorphism.
        \end{fact}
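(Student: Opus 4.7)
The plan is to establish the four properties of $f$: it is well-defined, it is a bijection, it is continuous, and its inverse is continuous. The symmetry of the hypothesis $(x_i) E_{\rm tt}(X) (y_i)$ will let me handle the last two by analogous arguments applied to the map going the other way.

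For well-definedness, suppose $(n_k)$ and $(m_k)$ are two increasing sequences with $\lim_k x_{n_k} = \lim_k x_{m_k} = x$. By the topological type hypothesis, both $(y_{n_k})$ and $(y_{m_k})$ converge; call the limits $y$ and $y'$. Interleave them: greedily construct an increasing sequence $(l_k)$ that picks infinitely many indices from both $(n_k)$ and $(m_k)$ (always possible because both sequences tend to $\infty$). Then $(x_{l_k})$ is a subsequence of a convergent sequence in the $\omega$-limit sense — more precisely, it splits into two subsequences both converging to $x$, so $(x_{l_k}) \to x$. By topological type, $(y_{l_k})$ converges, and because it contains subsequences converging to $y$ and to $y'$, we must have $y = y'$. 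Thus $f$ does not depend on the choice of indexing sequence. The same argument with the roles of $(x_i)$ and $(y_i)$ swapped defines a map $g: L((y_i)) \to L((x_i))$, and $f \circ g$ and $g \circ f$ are visibly the identity on their respective domains, so $f$ is a bijection.

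For continuity, since $L((x_i))$ and $L((y_i))$ are compact Hausdorff (closed subsets of the compact metric $X$), it suffices to show that the graph of $f$ is closed; then $f$ is automatically continuous, and by symmetry so is $f^{-1} = g$. Let $(a^{(j)}, b^{(j)})$ be points in the graph with $a^{(j)} \to a$ and $b^{(j)} \to b$. For each $j$, pick an increasing sequence $(n^{(j)}_k)_k$ realizing $(a^{(j)}, b^{(j)})$, and use a diagonal extraction: choose $m_j := n^{(j)}_{k_j}$ with $k_j$ large enough that $d(x_{m_j}, a^{(j)}) < 1/j$ and $d(y_{m_j}, b^{(j)}) < 1/j$, and with $m_j > m_{j-1}$. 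Then $(m_j)$ is an increasing sequence with $x_{m_j} \to a$ and $y_{m_j} \to b$, so $(a, b)$ lies in the graph of $f$. This proves the graph is closed, hence $f$ is a homeomorphism.

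The only step that requires care is the interleaving/diagonalization; everything else is a formal manipulation of the hypothesis. The main conceptual point — and it is mild — is recognizing that the topological type hypothesis is exactly strong enough to control the limits along arbitrarily chosen subsequences, which is precisely what well-definedness and closed-graph continuity each demand.
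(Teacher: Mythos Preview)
Your proof is correct; the paper itself simply writes ``It is routine to check'' and gives no argument, so your interleaving argument for well-definedness and closed-graph argument for continuity are a valid and standard way to fill in exactly the routine details the authors omit.
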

         \begin{proof}
             It is routine to check.
         \end{proof}
        
        It is obvious from the definition that $\omega$-limit set of any sequence in a compact metric space is nonempty and closed and two sequences with the same topological type have homeomorphic $\omega$-limit sets.

        The following lemma shows the connection of topological type of a sequence and isomorphism between minimal compact systems.

        \begin{lemma}\label{min}
            Let $(X,f,x)$ and $(Y,g,y)$ be two pointed minimal compact systems. The following are equivalent:
            
            \begin{enumerate}
                \item  $(X,f,x),(Y,g,y)$ are conjugate.
                \item  For any sequence of integers $(n_k)$, $(f^{n_k}(x))$ is convergent if and only $(g^{n_k}(y))$ is convergent.
                \item  $(f^n(x))_{n\in \mathbb{N}}$ and $(g^n(y))_{n\in \mathbb{N}}$ have the same topological type and there exists an increasing sequence $(m_k)$ of natural numbers such that $f^{m_k}(x)\rightarrow x$ and $g^{m_k}(y)\rightarrow y$.
                
            \end{enumerate} 
        \end{lemma}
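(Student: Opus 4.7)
The plan is to prove $(1)\Leftrightarrow(2)$ and $(1)\Leftrightarrow(3)$ through a single geometric object: the closed $f\times g$-invariant set
\[
Z=\overline{\{(f^n(x),g^n(y)):n\in\mathbb{Z}\}}\subseteq X\times Y
\]
(with its natural $\mathbb{N}$-version when working with (3)), viewed as the graph of the candidate conjugacy. The easy directions $(1)\Rightarrow(2)$ and $(1)\Rightarrow(3)$ are essentially definitional: a pointed conjugacy $h$ satisfies $g^{n_k}(y)=h(f^{n_k}(x))$, and $h,h^{-1}$ being continuous converts convergence into convergence in both directions, which gives (2); minimality of $(X,f)$ makes $x$ recurrent, so one can pick $m_k\to\infty$ with $f^{m_k}(x)\to x$, and applying $h$ gives $g^{m_k}(y)\to y$, producing the common recurrence sequence in (3).

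For $(2)\Rightarrow(1)$ I would check that $Z$ is the graph of a homeomorphism $h:X\to Y$ intertwining $f$ and $g$. Both projections of $Z$ are closed invariant sets containing the dense orbits of $x$ and $y$, so they equal $X$ and $Y$. To see $Z$ is single-valued, if $(x_0,y_0)$ and $(x_0,y_0')$ both belong to $Z$ via sequences $(n_k),(n_k')$, interleave them into a single integer sequence whose $f$-images converge to $x_0$; condition (2) then forces the $g$-images to converge, and the odd/even subsequences give $y_0=y_0'$. Closedness of $Z$ inside a compact space gives continuity of $h$; the converse direction of (2), applied symmetrically, produces a continuous inverse; $h\circ f=g\circ h$ is automatic from $f\times g$-invariance of $Z$; and $h(x)=y$ holds because $(x,y)=(f^0(x),g^0(y))\in Z$.

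For $(3)\Rightarrow(1)$ the same strategy is applied to
\[
Z=\overline{\{(f^n(x),g^n(y)):n\in\mathbb{N}\}},
\]
relying on the standard fact that the forward orbit in a minimal compact system is dense, so the projections remain surjective. Topological type on $\mathbb{N}$ takes care of single-valuedness at pure limit points by the same interleaving argument. The main obstacle is the mixed case: suppose $(f^a(x),g^a(y))\in Z$ and also $(f^{n_k}(x),g^{n_k}(y))\to(f^a(x),y_0')$ for some increasing $(n_k)\to\infty$; one needs $y_0'=g^a(y)$. Applying $f^{-a}\times g^{-a}$ and passing to the tail where $n_k\geq a$, this reduces to showing that whenever $f^{p_k}(x)\to x$ with $p_k\in\mathbb{N}$ increasing, one automatically has $g^{p_k}(y)\to y$. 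This is exactly where the hypothesis on $(m_k)$ is used: after extracting subsequences one can interleave $(p_k)$ and $(m_k)$ into one increasing sequence on which $f$-iterates of $x$ still converge to $x$; topological type forces the corresponding $g$-iterates of $y$ to converge to a single limit, and the embedded $(m_k)$-subsequence pins this limit down to $y$. The remaining properties of $h$ (continuity, equivariance, and bijectivity by a symmetric argument) follow exactly as in $(2)\Rightarrow(1)$.
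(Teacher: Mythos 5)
Your argument is correct, and it reaches the conclusion by a genuinely different decomposition than the paper. The paper proves the cycle $(1)\Rightarrow(2)\Rightarrow(3)\Rightarrow(1)$ and, for $(3)\Rightarrow(1)$, defines the conjugacy directly on $\omega$-limit sets via the assignment $\lim_k f^{m_k}(x)\mapsto \lim_k g^{m_k}(y)$, outsourcing well-definedness, continuity and bijectivity to Fact \ref{TT implies homeo} (and Fact \ref{strong TT implies homeo} for Lemma \ref{min2}); the common recurrence sequence $(m_k)$ is used there only at the very end, to check $h(x)=y$. You instead prove $(2)\Leftrightarrow(1)$ and $(3)\Leftrightarrow(1)$ separately by exhibiting the conjugacy as the closed orbit graph $Z=\overline{\{(f^n(x),g^n(y))\}}$ in $X\times Y$ and verifying single-valuedness by the interleaving trick. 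The mathematical core (interleaving plus compactness) is the same, but your version is more self-contained --- it unpacks exactly what the paper's ``routine to check'' Facts assert --- and it makes transparent \emph{why} the recurrence hypothesis in $(3)$ is needed: it is precisely what resolves the mixed case where an orbit point $(f^a(x),g^a(y))$ collides with a limit point of the graph. You also get $(2)\Rightarrow(1)$ as a direct argument rather than via $(3)$. Two points you leave implicit are harmless: in the key claim of the mixed case, $(g^{p_k}(y))$ is already known to converge by topological type, so the interleaving with $(m_k)$ only needs to identify the limit as $y$ (no ``every subsequence has a convergent sub-subsequence'' argument is required); and the degenerate situation $f^a(x)=f^b(x)$ with $a\ne b$ forces $X$ to be a finite cycle, where the same claim (applied to $p_k=kd$, together with continuity of $g^d$) gives $g^d(y)=y$.
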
 
        \begin{proof}

            (1)$\Rightarrow$(2). Let $h$ be the conjugacy map between $(X,f,x)$ and $(Y,g,y)$. Thus for any convergent sequence $(f^{n_k}x)$, we have
            $$
             {\rm lim}_k g^{n_k}y={\rm lim}_kg^{n_k}h(x)={\rm lim}_k h(f^n_k(x)).
            $$
            which exists since $h$ is continuous. 

            (2)$\Rightarrow$(3). By definition, we have the sequence $(f^n(x))$ and $(g^n(y))$ have the same topological type. By minimality, we can find an increasing subsequence $(m_k)$ such that $f^{m_k}x$ converges to $x$. Now take $(n_k)$ to be the sequence such that $n_{2k}=0$ and $n_{2k+1}=m_k$, we have $f^{n_k}(x)$ converges to $x$. Thus $g^{n_k}(y)$ also converges, since $g^{n_{2k}}y=y$, we know $g^{n_k}(y)$ converges to $y$, in particular, $g^{n_{2k+1}}(y)=f^{m_k}(y)$ converges to $y$.

            (3)$\Rightarrow$(1). Since $L((f^n(x)))$ is a nonempty invariant closed subset of $X$, we have $X=L((f^n(x)))$. Then any $x_0\in X$ can be written as the limit of $f^{m_k}(x)$ for some increasing $(n_k)$. Define $h(x_0)$ to be the limit of $g^{m_k}(y)$, by Fact \ref{TT implies homeo}, this is a well-defined isomorphism and this isomorphism sends $x$ to $y$.
        \end{proof}
        \begin{Corollary}\label{BRBR}
            The following are equivalent:
            \begin{enumerate}
                \item  $(X,f,x),(Y,g,y)$ are conjugate.
                \item  The following two sequences have the same topological type:
                $$
                 (x,f(x),x,f^2(x),x,f^3(x),x,\dots)\,\,\,\,\mbox{and}\,\,\,\,(y,g(y),y,g^2(y),y,g^3(y),y,\dots)
                $$
            \end{enumerate}
        \end{Corollary}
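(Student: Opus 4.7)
The plan is to derive the corollary directly from Lemma \ref{min}, specifically from the equivalence (1) $\Leftrightarrow$ (3) proved there. It suffices to verify that condition (2) of the corollary is equivalent to condition (3) of Lemma \ref{min}, i.e.\ to the statement that $(f^n(x))_{n\in\mathbb{N}}$ and $(g^n(y))_{n\in\mathbb{N}}$ have the same topological type together with the existence of an increasing sequence $(m_k)$ satisfying $f^{m_k}(x)\to x$ and $g^{m_k}(y)\to y$.

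For (1) $\Rightarrow$ (2) I would simply note that a conjugacy $h$ with $h(x)=y$ commutes with the dynamics, hence sends the first interleaved sequence termwise to the second; continuity of $h$ and $h^{-1}$ then transfers convergence of every subsequence.

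The substantive direction is (2) $\Rightarrow$ (1). Write the first interleaved sequence as $(a_n)$ with $a_{2k}=x$ and $a_{2k+1}=f^{k+1}(x)$, and the second as $(b_n)$ analogously. Restricting the topological-type hypothesis to the odd-indexed positions immediately yields that $(f^n(x))$ and $(g^n(y))$ have the same topological type, giving the first half of Lemma \ref{min}(3). For the second half, I would use minimality of $(X,f)$ to produce an increasing sequence $(\ell_k)$ with $f^{\ell_k}(x)\to x$, and then consider the subsequence of $(a_n)$ at the indices $2\ell_1-1<2\ell_1<2\ell_2-1<2\ell_2<\cdots$ (strictly increasing because $\ell_{k+1}\geq\ell_k+1$). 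The corresponding values alternate between $f^{\ell_k}(x)$ and the constant $x$, so the whole subsequence converges to $x$. By (2), the subsequence of $(b_n)$ at the same indices converges; since its even-indexed terms are identically $y$, the limit must be $y$, which forces $g^{\ell_k}(y)\to y$. Taking $m_k=\ell_k$ now supplies the common return sequence required by Lemma \ref{min}(3), which completes the reduction.

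There is no deep obstacle here: the only nontrivial point is the ``interleaving trick'' that exploits the constant entries $x$ and $y$ to pin down the limit of any convergent subsequence of $(a_n)$ to $x$ and of $(b_n)$ to $y$, thereby converting the topological-type statement about the interleaved sequences into the minimality-style joint-return condition that Lemma \ref{min}(3) demands.
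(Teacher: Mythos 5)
Your proposal is correct and follows essentially the same route as the paper, which simply cites Lemma \ref{min}(2) for the forward direction and Lemma \ref{min}(3) for the converse; your write-up just fills in the interleaving details (using the constant entries $x$ and $y$ to force the limit of the return subsequence) that the paper leaves implicit. The only point you gloss over is the harmless index shift between the odd-position subsequence $(f^{n+1}(x))_{n\ge 0}$ and $(f^{n}(x))_{n\in\mathbb{N}}$, which does not affect topological type.
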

        \begin{proof}
             (1)$\Rightarrow$(2). Take $(n_k)$ to be a subsequence of $(0,1,0,2,0,3,\dots)$, then this follows by Lemma \ref{min}(2).
             
            (2)$\Rightarrow$(1). This follows by Lemma \ref{min}(3).
           
        \end{proof}
              The condition (3) in Lemma \ref{min} can not be weakened to say that $(f^n(x))$ and $(g^n(y))$ have the same topological type. Indeed, in \cite[Corollary 7.4]{DDMP}, it is shown that every rank 2 minimal symbolic subshift $(X,S)$ has a unique asymptotic pair. This implies that there exists a pair of distinct points $x,y$ in $X$ such that for any increasing sequence $(n_k)$ of natural numbers, $(S^{n_k}x)$ converges if and only if $(S^{n_k}y)$ converges and if they converge then
              \begin{equation}\label{R2M}
                  {\rm lim}_kS^{n_k}(x)={\rm lim}_kS^{n_k}(y).
              \end{equation}
              In particular, $(S^nx)$ and $(S^ny)$ have the same topological type. However, there is no automorphism sending $x$ to $y$. Indeed, suppose there exists such an automorphism $f$. Since in a minimal system, the forward orbit of every point is dense, there is an increasing sequence $(n_k)$ of natural numbers such that $S^{n_k}x$ converges to $x$, but by (\ref{R2M}), this means 
              $$
              f(x)={\rm lim}_kf(S^{n_k}x)={\rm lim}_kS^{n_k}f(x)= {\rm lim}_kS^{n_k}(y)={\rm lim}_kS^{n_k}(x)=x,
              $$ 
              a contradiction.

                  The conjugacy of minimal compact system is a little different than the conjugacy of pointed minimal compact systems. First we recall the following standard fact.

               \begin{fact}\label{strong TT implies homeo}
             Let $(x_i)$ and $(y_i)$ be two sequences in a compact metric space $X$. Then the following are equivalent:
             \begin{enumerate}
                 \item For any sequence of natural numbers $(n_k)$, we have $(x_{n_k})$ is convergent if and only if $(y_{n_k})$ is convergent.
                 \item There is a homeomorphism from $\overline{\{(x_i)\}}$ to $\overline{\{(y_i)\}}$ sending $x_i$ to $y_i$.
             \end{enumerate}
         \end{fact}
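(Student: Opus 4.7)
The direction $(2) \Rightarrow (1)$ is immediate: if $h$ is a homeomorphism with $h(x_i) = y_i$, then continuity of $h$ and of $h^{-1}$ transports convergence of $(x_{n_k})$ to convergence of $(y_{n_k}) = (h(x_{n_k}))$ and back, for any sequence of indices $(n_k)$. For the interesting direction $(1) \Rightarrow (2)$, the plan is to mirror the strategy of Fact~\ref{TT implies homeo}, but to build $h$ on the full closure $\overline{\{x_i : i \in \mathbb{N}\}}$ rather than only on the $\omega$-limit set. Define $h(x) := \lim_k y_{n_k}$ whenever $x_{n_k} \to x$; existence of the limit is exactly hypothesis (1). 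Well-definedness follows from an interleaving trick: if both $x_{n_k}, x_{m_k} \to x$, the interleaved index sequence $(n_1, m_1, n_2, m_2, \ldots)$ still yields a convergent $x$-sequence, so (1) forces the interleaved $y$-sequence to converge, whence $\lim_k y_{n_k} = \lim_k y_{m_k}$. Taking $n_k \equiv i$ gives $h(x_i) = y_i$.

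Continuity of $h$ is then a diagonal argument: given $x^{(j)} \to x$ in the closure, for each $j$ pick an index $m_j$ along a witnessing sequence for $h(x^{(j)})$ satisfying $d(x_{m_j}, x^{(j)}) < 1/j$ and $d(y_{m_j}, h(x^{(j)})) < 1/j$; then $x_{m_j} \to x$, so by the definition of $h$ one has $y_{m_j} \to h(x)$, and hence $h(x^{(j)}) \to h(x)$. The hypothesis (1) is symmetric in the two sequences, so the same construction applied starting from $(y_i)$ produces a continuous map $h' : \overline{\{y_i : i \in \mathbb{N}\}} \to \overline{\{x_i : i \in \mathbb{N}\}}$ with $h'(y_i) = x_i$. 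Continuity of $h$ and $h'$, together with their agreement with the identity on the dense sets $\{x_i\}$ and $\{y_i\}$, forces $h' \circ h$ and $h \circ h'$ to be the respective identities, so $h$ is the desired homeomorphism.

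The step requiring the most care is the combined well-definedness and continuity argument: both invoke (1) on index sequences that arise from interleaving or from diagonal extraction and are therefore typically non-monotone. This is precisely why the hypothesis here is phrased for arbitrary sequences of natural numbers rather than only strictly increasing ones, and is what makes condition (1) strictly stronger than merely having the same topological type.
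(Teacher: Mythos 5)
Your proof is correct; the paper simply declares this fact ``routine to check'' and gives no argument, and what you have written is exactly the standard argument one would supply (interleaving for well-definedness, a diagonal extraction for continuity, and symmetry of hypothesis (1) to produce the inverse). Your closing remark correctly identifies why the hypothesis must allow arbitrary, not just increasing, index sequences — this is precisely the distinction the paper draws between this fact and Fact~\ref{TT implies homeo}.
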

         
         \begin{proof}
             It is routine to check.
         \end{proof}

              Now we have the following version of Lemma \ref{min}.
              
              \begin{lemma}\label{min2}
            Let $(X,f)$ and $(Y,g)$ be two minimal compact systems. The following are equivalent:
            
            \begin{enumerate}
                \item $(X,f),(Y,g)$ are conjugate.
                 \item There exists $x\in X$ and $y\in Y$ such that for all sequences of integers $(n_k)$, we have that $(f^{n_k}(x))$ converges if and only if $(g^{n_k}(x))$ converges.
                \item There exists $x\in X$ and $y\in Y$ such that $(f^n(x))_{n\in \mathbb{N}}$ and $(g^n(y))_{n\in \mathbb{N}}$ have the same topological type.
               \end{enumerate}
         \end{lemma}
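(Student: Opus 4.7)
The plan is to run an argument parallel to that of Lemma~\ref{min}, observing that minimality of $(X,f)$ and $(Y,g)$ absolves us from having to track a distinguished base point.

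For (1)$\Rightarrow$(2), given a conjugacy $h:X\to Y$, any $x\in X$ and $y:=h(x)$ will do: since $h\circ f^n=g^n\circ h$ for every integer $n$ and $h$ is a homeomorphism, $(f^{n_k}(x))$ converges if and only if $(g^{n_k}(y))=(h(f^{n_k}(x)))$ does. The implication (2)$\Rightarrow$(3) is immediate, because topological type is defined through increasing sequences of natural numbers, which are in particular sequences of integers, so (2) subsumes (3).

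The real work is in (3)$\Rightarrow$(1), where I will borrow the construction underlying Fact~\ref{TT implies homeo}. By minimality of $(X,f)$, the $\omega$-limit set $L((f^n(x))_{n\in\mathbb{N}})$ is a nonempty closed $f$-invariant subset of $X$, hence equals $X$; similarly $L((g^n(y))_{n\in\mathbb{N}})=Y$. Using that $(f^n(x))_{n\in\mathbb{N}}$ and $(g^n(y))_{n\in\mathbb{N}}$ have the same topological type, Fact~\ref{TT implies homeo} yields a well-defined homeomorphism
$$
h:X\longrightarrow Y,\qquad h\bigl(\lim_k f^{n_k}(x)\bigr)=\lim_k g^{n_k}(y).
$$
To upgrade $h$ to a conjugacy I will invoke continuity of $f$ and $g$ on any representation $x_0=\lim_k f^{n_k}(x)$: then $f(x_0)=\lim_k f^{n_k+1}(x)$, and so
$$
h(f(x_0))=\lim_k g^{n_k+1}(y)=g\bigl(\lim_k g^{n_k}(y)\bigr)=g(h(x_0)),
$$
which gives $h\circ f=g\circ h$ as desired.

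I do not foresee any serious obstacle. The only mildly delicate point is that Fact~\ref{TT implies homeo} is stated for two sequences in a common compact metric space, whereas here $(f^n(x))$ and $(g^n(y))$ sit in $X$ and $Y$ respectively; this is cosmetic, handled either by inspecting the proof of the fact (it uses nothing about the ambient space of the two sequences being the same) or by embedding $X$ and $Y$ into $[0,1]^{\mathbb{N}}$ and running the fact there.
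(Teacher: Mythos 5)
Your proof is correct and follows essentially the same route as the paper's: (1)$\Rightarrow$(2) by transporting convergence through the conjugacy, (2)$\Rightarrow$(3) trivially, and (3)$\Rightarrow$(1) by using minimality to identify $X$ with $L((f^n(x)))$ and $Y$ with $L((g^n(y)))$ and defining $h$ via limits of matching subsequences. Your explicit verification of the equivariance $h\circ f=g\circ h$ (by shifting the index sequence $(n_k)$ to $(n_k+1)$) is a detail the paper leaves implicit, and your remark about the two sequences living in different ambient spaces is a harmless cosmetic point that the paper also glosses over.
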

         \begin{proof}
             (1)$\Rightarrow$(2) Directly by Lemma \ref{min}. 
             
             (2)$\Rightarrow$(3) is trivial.
             
             (3)$\Rightarrow$(1)  Since $L((f^n(x)))$ is a nonempty invariant closed subset of $X$ and $X$ is minimal, we have $X=L((f^n(x)))$. Then any $x_0\in X$ can be written as the limit of $f^{m_k}(x)$ for some increasing $(m_k)$. Define $h(x_0)$ to be the limit of $g^{m_k}(y)$, then by Fact \ref{strong TT implies homeo}, this is a well defined conjugacy.
         \end{proof}
         \begin{lemma}\label{Will1}
             Let $X$ be a compact metric space, $(x_i)$ be a sequence of elements of $X$ and $(p_i)$ be a fast growing sequence. Let $z((p_i),(x_i))$ be an Oxtoby sequence. Let $\pi$ be the canonical factor map of $(\overline{O}(z((p_i),(x_i))),S)$. Assume $x\in (\overline{O}(z((p_i),(x_i))))$ is such that $\pi(x)=(n_i)$ and ${\rm Aper}(x)\neq \emptyset$. Then 
             \begin{enumerate}
                 \item Both $(n_i)$ and $(p_i-n_i)$ go to infinity.
                 \item For any $k\in {\rm Aper}(x)$ and $i\in \mathbb{N}$, if $-n_i<k<p_i-n_i$, then $S^{n_i}z(k)=x_{i+1}$.
                 
             \end{enumerate}
         \end{lemma}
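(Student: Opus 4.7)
The plan is to handle (1) by contradiction using Lemma \ref{EQF}(ii) (which says the $\pi$-fiber over any Toeplitz sequence is a singleton), and to prove (2) by showing that $k+n_i$ lies in $J(i,0)$, which is precisely what step $i+1$ of the Oxtoby construction fills with $x_{i+1}$.

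For (1): Assume toward a contradiction that $n_i$ does not go to infinity and fix $M$ and an infinite set of indices with $n_i \leq M$. Since the tower satisfies $n_{i+1} \equiv n_i \pmod{p_i}$ and each $n_i$ lies in $\{0, \ldots, p_i - 1\}$, picking $i_0$ with $p_{i_0} > M$ forces $n_i = n_{i_0}$ for every $i \geq i_0$ (any $n_i \leq M < p_{i_0}$ congruent to $n_{i_0}$ mod $p_{i_0}$ must equal $n_{i_0}$). Setting $n := n_{i_0}$, $\pi(x)$ coincides with the image of the integer $n$ in $G$, which equals $\pi(S^n z)$. Since $S^n z$ is Toeplitz, Lemma \ref{EQF}(ii) yields $x = S^n z$, contradicting $\mathrm{Aper}(x) \neq \emptyset$. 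The case $p_i - n_i \not\to \infty$ is symmetric: the substitution $m_i := p_i - n_i$ converts the compatibility to $m_{i+1} \equiv m_i \pmod{p_i}$, so $m_i$ stabilizes at some $m \geq 1$, and $\pi(x) = \pi(S^{-m} z)$ forces $x = S^{-m} z$, a contradiction.

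For (2): I would first verify that for each $j \leq i$, the set of positions assigned $x_j$ in step $j$ of the Oxtoby construction is a $p_j$-periodic subset of $\mathbb{Z}$. This is a short induction on $j$: step $j$ fills $J(j-1, k)$ for $k$ in a prescribed residue class modulo $p_j/p_{j-1}$, and translation by $p_j$ preserves both the block index mod $p_j/p_{j-1}$ and the block-internal structure by the induction hypothesis (this is essentially the content of Lemma \ref{OL1}). Since $p_j \mid p_i$, this filling is then $p_i$-periodic on $\mathbb{Z}$, so every position filled during steps $1,\ldots,i$ belongs to $\mathrm{Per}_{p_i}(z)$. Consequently any $n \in [0, p_i)$ with $n \notin \mathrm{Per}_{p_i}(z)$ must lie in $J(i, 0)$, and step $i+1$ (whose marking includes $k'=0$) fills $J(i, 0)$ with value $x_{i+1}$.

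Now apply this with $n := k + n_i$. The hypothesis $-n_i < k < p_i - n_i$ gives $n \in (0, p_i) \subset [0, p_i)$. Since $x \in \overline{A^i_{n_i}}$, the proof of Lemma \ref{EQF}(i) shows $\mathrm{Per}_{p_i}(x) = \mathrm{Per}_{p_i}(S^{n_i} z) = \mathrm{Per}_{p_i}(z) - n_i$, and $k \in \mathrm{Aper}(x)$ therefore translates to $n \notin \mathrm{Per}_{p_i}(z)$. By the previous paragraph, $n \in J(i, 0)$ and hence $z(n) = x_{i+1}$, that is, $S^{n_i} z(k) = x_{i+1}$. The main technical point is the $p_j$-periodicity of step $j$'s filling; everything else is bookkeeping about translating the hypotheses through the map $\pi$.
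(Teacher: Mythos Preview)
Your proof is correct and follows essentially the same approach as the paper: for (1) you show the coordinates $n_i$ (respectively $p_i-n_i$) must stabilize, forcing $x$ to equal a shift of $z$ and hence be Toeplitz, and for (2) you show $k+n_i\in J(i,0)$ so that step $i{+}1$ of the Oxtoby construction assigns it the value $x_{i+1}$. The only minor point is that your parenthetical justification for ``$n_i=n_{i_0}$ for every $i\ge i_0$'' literally covers only those $i$ with $n_i\le M$; the remaining $i$ follow by picking $j>i$ in the infinite set and using $n_j\equiv n_i\pmod{p_i}$, which is immediate.
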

          \begin{proof}
            (1) First note that by the definition of canonical factor map, the sequence $(n_i)$ is increasing. And since $n_{i}\equiv n_{i+1}$ mod $p_i$ and $p_i\mid p_{i+1}$, thus, $p_{i+1}-n_{i+1}\geq p_{i+1}-(p_{i+1}-p_i)-n_i=p_i-n_i$. In conclusion, both sequences $(n_i)$ and $(p_i-n_i)$ are non-decreasing. Now suppose $(n_i)$ is eventually constant equal to $n$, then $y=S^nz$ which is Toeplitz, a contradiction. Finally suppose $(p_i-n_i)$ is eventually constant, say equal to $n$, then $y=S^{-n}z$ which is also Toeplitz, a contradiction. 

            (2) Let $g=(n_i)$ and $k\in {\rm Aper}(g)$. Write $z$ for $z((p_i),(x_i))$. Recall the notation $J(i,k)$ from the definition of an Oxtoby sequence and note that for all $m\in J(i,0)$ we have $z(m)=x_{i+1}$, and thus $S^{n_i}z(m-n_i)=x_{i+1}$ for all $m\in J(i,0)$. Note that ${\rm Per}_{p_i}(g)={\rm Per}_{p_i}(S^{n_i}z)$, so, in particular, ${\rm Per}_{p_i}(g)\cap [-n_i,p_i-n_i)={\rm Per}_{p_i}(S^{n_i}z)\cap [-n_i,p_i-n_i)$. On the other hand, by the definition of $J(i,0)$, the set ${\rm Per}_{p_i}(z)$ is complementary to
            $J(i,0)$ on $[0,p_i)$, so ${\rm Per}_{p_i}(S^{n_i}z)$ is  complementary to $\{m-n_i|m\in J(i,0)\}$ on $[-n_i,p_i-n_i)$. Thus, ${\rm Per}_{p_i}(g)$ is complementary to $\{m-n_i|m\in J(i,0)\}$ on $[-n_i,p_i-n_i)$. This implies
            \begin{equation}\label{AP111}
                {\rm Aper}(g)\cap [-n_i,p_i-n_i)\subseteq \{m-n_i|m\in J(i,0)\}.
            \end{equation}
             By (\ref{AP111}), we know that for $k\in {\rm Aper}(x)$ and $-n_i<k<p_i-n_i$, we can find $m\in J(i,0)$ such that $k=m-n_i$, thus 
             $$
              S^{n_i}z(k)=S^{n_i}z(m-n_i)=x_{i+1}.
             $$
            
        \end{proof}
        \begin{lemma}\label{Will}
             Let $X$ be a compact metric space, $(x_i)$ be a sequence of elements of $X$ and $(p_i)$ be a fast growing sequence. Let $z((p_i),(x_i))$ be an Oxtoby sequence. Let $\pi$ be the canonical factor map of $(\overline{O}(z((p_i),(x_i))),S)$. Assume $g\in \pi(\overline{O}(z((p_i),(x_i))))$ is such that ${\rm Aper}(g)\ne\emptyset$. Then we have:
             \begin{enumerate}
                 \item For all $k\in {\rm Aper}(g)$ and $y\in \pi^{-1}(g)$ we have $y(k)\in L((x_i))$.
                 \item For all $x \in L((x_i))$, we can find $y\in \pi^{-1}(g)$, such that for all $k\in {\rm Aper}(g)$, we have $y(k)=x$.
             \end{enumerate}
        \end{lemma}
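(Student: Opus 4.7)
The plan is to approach each $y \in \pi^{-1}(g)$ by translates of $z := z((p_i),(x_i))$ and use Lemma \ref{Will1} as the computational core; the two parts have parallel structures. Write $g = (n_i)$ throughout.

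For (1), I would fix $y \in \pi^{-1}(g)$ and $k \in \mathrm{Aper}(g)$, and choose $(m_j) \subset \mathbb{Z}$ with $S^{m_j} z \to y$ (possible since $y \in \overline{O}(z)$), so that $z(m_j + k) \to y(k)$. Continuity of $\pi$, combined with $\pi(S^{m_j} z) = m_j \cdot \bar{1}$, gives $m_j \cdot \bar{1} \to g$ in $G$; this unpacks as the statement that for every $i$ one has $m_j \equiv n_i \pmod{p_i}$ for $j$ large. Let $i_j$ be the largest $i$ for which this holds; then $i_j \to \infty$, and I can write $m_j = n_{i_j} + \ell_j p_{i_j}$. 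Since the positions defined during the first $i_j$ Oxtoby steps are $p_{i_j}$-periodic and, by the proof of Lemma \ref{Will1}(2), $n_{i_j} + k$ lies in $J(i_j, 0)$ for $j$ large, the congruent position $m_j + k$ is likewise undefined after step $i_j$. Letting $s_j \geq i_j + 1$ denote the step at which $m_j + k$ is filled, we obtain $z(m_j + k) = x_{s_j}$ with $s_j \to \infty$, whence $y(k) = \lim_j x_{s_j} \in L((x_i))$.

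For (2), given $x \in L((x_i))$, I would pick $i_j \uparrow \infty$ with $x_{i_j + 1} \to x$, and by compactness pass to a subsequence along which $S^{n_{i_j}} z$ converges to some $y \in \overline{O}(z)$. Continuity of $\pi$ together with the observation that $\pi(S^{n_{i_j}} z) = n_{i_j} \cdot \bar{1} \to g$ in $G$ (the $i$-th coordinate equals $n_i$ once $i_j \geq i$, by the compatibility of $g$) forces $\pi(y) = g$. For every $k \in \mathrm{Aper}(g)$, Lemma \ref{Will1}(1) yields $-n_{i_j} < k < p_{i_j} - n_{i_j}$ for $j$ large, so Lemma \ref{Will1}(2) gives $S^{n_{i_j}} z(k) = x_{i_j + 1}$ eventually, and taking the limit produces $y(k) = x$ simultaneously for every such $k$.

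The main subtlety, and the only step that uses the Oxtoby recipe rather than just Lemma \ref{Will1}, lies in (1): although $m_j + k \neq n_{i_j} + k$ in general, they share the same residue modulo $p_{i_j}$, and the $p_{i_j}$-periodicity of the first $i_j$ steps of the construction must be invoked to force $m_j + k$ to remain undefined through step $i_j$. This is what guarantees $s_j \geq i_j + 1$ and hence $s_j \to \infty$; once this co-filling observation is in place, everything else reduces to taking limits and reading off the definition of $L((x_i))$.
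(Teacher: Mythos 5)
Your proof is correct and follows essentially the same route as the paper: both parts rest on Lemma \ref{Will1}, and your part (2) is the paper's argument almost verbatim. In part (1) you are in fact slightly more careful than the paper, which simply asserts that every $y\in\pi^{-1}(g)$ is a subsequential limit of $(S^{n_i}z)_i$; your detour through arbitrary approximating shifts $S^{m_j}z$ and the $p_{i_j}$-periodicity of the set of positions filled by step $i_j$ supplies the justification that the paper leaves implicit.
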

       
        \begin{proof}

            (1) Since ${\rm Aper}(g)\neq \emptyset$, every element in $\pi^{-1}(g)$ has nonempty aperiodic part. Thus, $\pi^{-1}(g)$ must contain a non-Toeplitz sequence. 
            
            By Lemma \ref{Will1}(1), both $(n_i)$ and $(p_i-n_i)$ go to infinity. Thus, eventually there will be some $j_0$ such that for all $j\geq j_0$ we have $-n_j<k<p_j-n_j$. By Lemma \ref{Will1}(2) and the construction of an Oxtoby sequence we have 
            $S^{n_j}z(k)=x_{j+1}$
            for all $j\geq j_0$. For any $y\in \pi^{-1}(g)$, note that $y$ is the limit of a subsequence of $S^{n_i}z$, hence $y(k)$ is a limit of a subsequence of $(x_i)$, thus we have $y(k)\in L((x_i))$. 
            
            (2) For $x\in L((x_i))$, we can take a subsequence $(x_{i_j})$ of $(x_i)$ which converges to $x$ as $j$ goes to infinity. By Lemma \ref{Will1}(2), we have that $S^{n_{i_j-1}}z(n)=x_{i_j}$ converges to $x$. Denote the limit of $S^{n_{i_j}-1}z$ by $y$. Then $\pi(y)=g$ and $y$ is the desired sequence.
        \end{proof}

        \begin{proposition}\label{SIN}
            Let $X$ be a compact metric space, $(x_i) \in X^{\omega}$. Let $(\overline{O}(z((p_i),(x_i))),S)$  be an Oxtoby system with fast growing sequences $(p_i)$. Let $\pi$ be the canonical factor map of $(\overline{O}(z((p_i),(x_i))),S)$. Then $L((x_i))$ is a singleton if and only if then $\pi$ is an isomorphism.
        \end{proposition}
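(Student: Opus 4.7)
The plan is to reduce the statement to the injectivity of $\pi$ and then analyze the fibers $\pi^{-1}(g)$ using Lemmas \ref{EQF} and \ref{Will}. Since $\pi$ is a continuous surjection between compact metric spaces intertwining $S$ with $\widehat{1}$, it is an isomorphism of systems if and only if it is injective, i.e., if and only if every fiber $\pi^{-1}(g)$ is a singleton.

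For the forward direction, assume $L((x_i))=\{x\}$ and fix $g\in G$. I split into two cases. If ${\rm Aper}(g)=\emptyset$, then by Lemma \ref{EQF}(i) every $y\in\pi^{-1}(g)$ has empty aperiodic part, so every such $y$ is Toeplitz, and Lemma \ref{EQF}(ii) then forces $\pi^{-1}(g)=\{y\}$ to be a singleton. If ${\rm Aper}(g)\neq\emptyset$, Lemma \ref{EQF}(i) already pins down $y\in\pi^{-1}(g)$ on ${\rm Per}(g)$ from $g$ alone, while Lemma \ref{Will}(1) forces $y(k)\in L((x_i))=\{x\}$ for every $k\in{\rm Aper}(g)$, so $y(k)=x$ is also determined. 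Hence $y$ is uniquely determined by $g$ and $\pi^{-1}(g)$ is a singleton in both cases.

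For the reverse direction, argue the contrapositive. Suppose $L((x_i))$ contains distinct points $x\neq x'$; in particular $(x_i)$ takes at least two different values infinitely often, which forces the Oxtoby sequence $z((p_i),(x_i))$ to be non-periodic and the orbit closure $(\overline{O}(z((p_i),(x_i))),S)$ to be infinite. By Lemma \ref{non-Toeplitz exist} there exists a non-Toeplitz $w\in\overline{O}(z((p_i),(x_i)))$; set $g=\pi(w)$, so that ${\rm Aper}(g)\neq\emptyset$. Applying Lemma \ref{Will}(2) to $x$ and to $x'$ produces $y,y'\in\pi^{-1}(g)$ with $y(k)=x$ and $y'(k)=x'$ for every $k\in{\rm Aper}(g)$; since ${\rm Aper}(g)\neq\emptyset$ and $x\neq x'$, these are distinct, so $\pi$ fails to be injective. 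The only non-routine step is checking that the orbit closure is genuinely infinite under the two-point hypothesis, after which everything is a direct application of the already established Lemmas \ref{EQF} and \ref{Will}.
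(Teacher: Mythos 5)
Your proof is correct and follows essentially the same route as the paper's: the forward direction pins down each fiber by determining its periodic part via Lemma \ref{EQF} and its aperiodic values via Lemma \ref{Will}(1), and the reverse direction is just the contrapositive of the paper's argument, producing two distinct points in a single fiber via Lemma \ref{non-Toeplitz exist} and Lemma \ref{Will}(2). The one step you flag but do not fully verify---that two limit points force $z((p_i),(x_i))$ to be non-periodic, so that Lemma \ref{non-Toeplitz exist} applies to an infinite system---is genuinely needed, but the paper's own proof passes over the same point silently, so this is not a gap relative to the paper's standard.
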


        \begin{proof}
            Suppose $L(x_i)$ is a singleton. We will show that $\pi$ is injective. By Lemma \ref{EQF}(ii), $\pi$ is injective on Topelitz sequences. Let $x$ be the unique element in $L((x_i))$.  If $y$ is a non-Toeplitz sequence in $\overline{O}(z((p_i),(x_i)))$, then by Lemma \ref{Will}, for every $n\in {\rm Aper}(y)$, we have $y(n)=x$.  Since ${\rm Per}(y)$ and the restriction of $y$ on ${\rm Per}(y)$ is uniquely determined by $\pi(y)$, this shows $\pi^{-1}(\pi(y))$ is a singleton. 

            On the other hand, suppose $\pi$ is injective. Use Lemma \ref{non-Toeplitz exist} to find a non-Toeplitz sequence $y\in \overline{O}(z((p_i),(x_i))$, and write $\pi(y)=g$. Note that ${\rm Aper}(g)\neq \emptyset$ since $y$ is non-Toeplitz. By Lemma \ref{Will}(2), we have that  $|L(x_i)|\leq |\pi^{-1}(g)|=1$ which finishes the proof since $L(x_i)$ is non empty. 
        \end{proof}
          \vspace{0.5em}


    \section{Isomorphism type of Oxtoby systems}\label{proof 1}
        In this section, we study the isomorphism type of Oxtoby systems. The automorphism group of Oxtoby system was studied in \cite{BK} where Bu\l atek and Kwiatkowski proved that any self-conjugacy map of a symbolic Oxtoby system is a shift. In the following theorem, we generalize it as follows. 
        \begin{theorem} \label{Main1}
             Let $(X,d)$ be a compact metric space, $(x_i)$ and $(x'_i)$ are in $X^{\omega}$. Let $(\overline{O}(z((p_i),(x_i))),S)$ and
             $(\overline{O}(z((p_i),(x'_i))),S)$ be two Oxtoby systems with the same fast growing sequences $(p_i)$. Suppose both $L((x_i))$ and $L((x_i'))$ have at least two points. Then for any isomorphism $f$ between  $(\overline{O}(z((p_i),(x_i))),S)$ and $(\overline{O}(z((p_i),(x'_i))),S)$, we have $f(z((p_i),(x_i)))=S^mz((p_i),(x'_i))$ for some $m\in \mathbb{Z}$.
      \end{theorem}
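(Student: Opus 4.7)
The plan is to reduce the statement to an analysis of the induced map on the maximal equicontinuous factor. Since both Oxtoby systems share the period structure $(p_i)$, they have the same maximal equicontinuous factor $(G, \widehat{1})$ where $G = \varprojlim \mathbb{Z}_{p_i}$. Any isomorphism $f$ induces an isomorphism $h \colon G \to G$ with $h \circ \pi = \pi' \circ f$, and since $h$ commutes with $\widehat{1}$ it must be a translation $h(g) = g + c$ for some fixed $c \in G$. Because $z((p_i),(x_i))$ is Toeplitz with $\pi(z((p_i),(x_i))) = \bar{0}$, Lemma \ref{EQF}(ii) gives $\pi^{-1}(\bar{0}) = \{z((p_i),(x_i))\}$, so $f(z((p_i),(x_i)))$ is the unique element of $\pi'^{-1}(c)$. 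It therefore suffices to show that $c$ lies in the image of $\mathbb{Z}$ inside $G$, because then $\pi'^{-1}(c) = \{S^m z((p_i),(x'_i))\}$ with $c = \bar{m}$.

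Next I would establish that $f$ preserves the partition of $G$ induced by $T := \{g \in G : {\rm Aper}(g) = \emptyset\}$. By Lemma \ref{CS}, every sequence in the orbit closure takes a single common value at all its aperiodic positions, and combined with Lemma \ref{Will}(1)(2) this produces a bijection $\pi^{-1}(g) \leftrightarrow L((x_i))$ for $g \notin T$ (via $y \mapsto $ its common aperiodic value), whereas $\pi^{-1}(g)$ is a singleton for $g \in T$ by Lemma \ref{EQF}(ii). The analogous description holds for $\pi'$ and $L((x'_i))$. Since $f$ is a homeomorphism sending $\pi^{-1}(g)$ onto $\pi'^{-1}(g + c)$, and since both $|L((x_i))|$ and $|L((x'_i))|$ are at least $2$, corresponding fibers must have matching cardinalities. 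Hence $g + c \in T$ iff $g \in T$, i.e., $T + c = T$, and taking $g = \bar{0} \in T$ in particular yields $c \in T$.

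The main obstacle is the final step of concluding $c \in \mathbb{Z}$. Using Lemma \ref{OL1} together with the nested description $J(i+1,0) = \bigsqcup_{k=1}^{p_{i+1}/p_i - 2} J(i,k)$, one sees that $G \setminus T$ equals $J_\infty + \mathbb{Z}$, where $J_\infty \subseteq G$ consists of the compatible sequences $(r_i)_i$ with $r_i \in J(i,0)$ for every $i$. The combination of $c \in T$ (i.e., $c \notin J_\infty + \mathbb{Z}$) with the stabilization property $c + J_\infty \subseteq J_\infty + \mathbb{Z}$ must then be shown to force $c \in \mathbb{Z}$ by a level-by-level combinatorial argument exploiting the hypothesis $p_{j+1}/p_j \geq 3$: the rigid "middle" location of $J(i,0)$ inside $[0,p_i)$ described in Lemma \ref{Will1}, together with the fact that the integer translates of $J_\infty$ are mutually disjoint (since $J_\infty \cap \mathbb{Z} = \emptyset$ because each $r_i \in J(i,0) \subseteq [p_{i-1}, p_i - p_{i-1})$ escapes to infinity), constrains any translation preserving the partition so tightly that the associated shift must be an integer. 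Once $c \in \mathbb{Z}$ is established, $f(z((p_i),(x_i)))$ is the unique element of $\pi'^{-1}(\bar{m}) = \{S^m z((p_i),(x'_i))\}$, completing the proof.
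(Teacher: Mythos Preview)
Your reduction to the maximal equicontinuous factor and the identification of the induced map as a translation $g\mapsto g+c$ are correct, as is the fiber-cardinality argument showing $T+c=T$ and hence $c\in T$ (this is essentially Lemma~\ref{ToptoTop}). The gap is entirely in your third step, which is where all the difficulty lies. First, a concrete error: the integer translates of $J_\infty$ are \emph{not} mutually disjoint in general. Take $p_1=4$ and $p_{i+1}/p_i=3$ for $i\geq 1$; then $J(1,0)=\{1,2\}$ and inductively $J(i,0)$ consists of two consecutive integers, so the two points of $J_\infty$ differ by $\bar 1$ and $J_\infty\cap(J_\infty+1)\neq\emptyset$. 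Your inference of disjointness from $J_\infty\cap\mathbb{Z}=\emptyset$ is a non sequitur: what is needed is $(J_\infty-J_\infty)\cap(\mathbb{Z}\setminus\{0\})=\emptyset$, and this fails whenever $|J(1,0)|>1$.

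More seriously, it is not clear that the bare condition $T+c=T$ --- which is all your argument extracts from $f$ --- forces $c\in\mathbb{Z}$; you have discarded every piece of information about $f$ beyond fiber cardinalities, and the promised ``level-by-level combinatorial argument'' is neither supplied nor obviously available. The paper extracts strictly more from $f$: using uniform continuity together with the two-point hypothesis on $L((x'_i))$ (which yields a positive $\epsilon$), Lemma~\ref{L4} shows that for all sufficiently large $i$, whenever an interval $[kp_i,(k+1)p_i)$ lies in ${\rm Per}_{p_j}(z)$ it must also lie in ${\rm Per}_{p_j}(f(z))$. This quantitative matching of periodic structure between $z$ and $f(z)$ --- far finer than the mere translation-invariance of $T$ --- is what drives the detailed analysis of $p_{i+1}$-$p_i$- and $p_{i+2}$-$p_i$-pieces (Lemmas~\ref{I+1} and~\ref{I+2}) that eventually forces $(n_i)$, or $(p_i-n_i)$, to be constant for large $i$. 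Your sketch locates the obstacle correctly but does not confront it.
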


        In Theorem \ref{Main1}, we need the assumption that $L((x_i))$ and $L((x_i'))$ have at least two points otherwise the result is not true.  Indeed, if $L((x_i))$ is a singleton then $(\overline{O}(z((p_i),(x_i))),S)$ is conjugate to its maximal equicontinuous factor by Theorem \ref{SIN}. The maximal equicontinuous factor of any Toeplitz system in particular of  $(\overline{O}(z((p_i),(x_i))),S)$ is an Odometer and in the Odometer any point can be mapped to another point by isomorphism.
        
        To prove this result, we need the following lemmas. Let us recall the notation we are using. If $X$ is a set, $I=[a,b]$ is an interval with $a,b\in \mathbb{Z}$ and $z\in X^{\mathbb{Z}}$, by $z[a,b]$, we mean the sequence $(z(a),z(a+1), \dots, z(b))$. Similar notation is used for open and half open half closed intervals.
        \begin{lemma} \label{L1}
            Let $(X,d)$ be a compact metric space. Let $f$ be an isomorphism between two subshifts $A$ and $B$ in $X^{\mathbb{Z}}$. Then for all $\epsilon>0$, there exists an $n\in \mathbb{N}$, such that for all $x\in A$ and every $m,m'\in \mathbb{Z}$, if $x[m-n,m+n)=x[m'-n,m'+n)$ then $d(f(x)(m),f(x)(m'))<\epsilon$.
        \end{lemma}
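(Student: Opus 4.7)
The plan is to reduce the statement to a uniform continuity argument for the continuous map $f$ on the compact metric space $A$. First I would observe that since $f$ commutes with the shift, we have
\[
f(x)(m) = (S^m f(x))(0) = f(S^m x)(0) \quad\text{and}\quad f(x)(m') = f(S^{m'} x)(0),
\]
so, writing $y = S^m x$ and $y' = S^{m'} x$, the hypothesis $x[m-n,m+n) = x[m'-n,m'+n)$ is exactly $y[-n,n) = y'[-n,n)$, and the desired inequality becomes $d(f(y)(0), f(y')(0)) < \varepsilon$. Thus it suffices to prove: for every $\varepsilon > 0$ there exists $n \in \mathbb{N}$ such that whenever $y,y' \in A$ satisfy $y[-n,n) = y'[-n,n)$, we have $d(f(y)(0), f(y')(0)) < \varepsilon$.

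Next I would fix a compatible metric on $X^{\mathbb{Z}}$, for instance
\[
\rho(y,y') = \sum_{k \in \mathbb{Z}} 2^{-|k|}\, d(y(k), y'(k)),
\]
which induces the product topology. The subshift $A$ is closed in the compact space $X^{\mathbb{Z}}$, hence compact, and the evaluation map $\mathrm{ev}_0 : X^{\mathbb{Z}} \to X$ is continuous. Therefore the composition $g := \mathrm{ev}_0 \circ f \colon A \to X$ is continuous on a compact metric space, hence uniformly continuous: given $\varepsilon > 0$, pick $\delta > 0$ such that $\rho(y,y') < \delta$ implies $d(g(y), g(y')) < \varepsilon$.

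Finally, since $X$ is compact its diameter $D := \mathrm{diam}(X)$ is finite, and I would choose $n$ large enough that $D \sum_{|k| \geq n} 2^{-|k|} < \delta$. If $y[-n,n) = y'[-n,n)$, then the terms in $\rho(y,y')$ with $-n \leq k < n$ vanish, and the remaining terms contribute at most $D \sum_{|k|\geq n} 2^{-|k|} < \delta$; hence $\rho(y,y') < \delta$ and therefore $d(f(y)(0), f(y')(0)) < \varepsilon$. Translating back via the shift-equivariance gives the conclusion. There is no real obstacle here: the only thing to be a bit careful about is the mild asymmetry between the window $[-n,n)$ (which is $\{-n,\dots,n-1\}$) and the tails $\{k : |k| \geq n\}$, but this is absorbed into the summability of $2^{-|k|}$.
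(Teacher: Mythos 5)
Your proposal is correct and follows essentially the same route as the paper's proof: both reduce to the point $0$ via shift-equivariance ($f(S^m x)=S^m f(x)$), invoke uniform continuity of $f$ (equivalently of $\mathrm{ev}_0\circ f$) on the compact subshift, and choose $n$ so that agreement on a window of length $2n$ forces closeness in the product metric. If anything, you are slightly more careful than the paper in restricting the uniform continuity statement to $A$ and in tracking the half-open window versus the tail sum.
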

        \begin{proof}
            Let $d_1$ be the product metric on $X^{\mathbb{Z}}$. Fix $\epsilon>0$. Since $f$ is uniformly continuous, we can find an $\epsilon_0$ such that 
            $$
            \forall x,y\in X^{\mathbb{Z}},\, d_1(x,y)<\epsilon_0 \Rightarrow d(f(x)(0),f(y)(0))<\epsilon.
            $$
            
            Now find $n$ such that for any $x,y\in X^{\mathbb{Z}}$, 
            $$
            x(-n,n]=y(-n,n] \Rightarrow d_1(x,y)<\epsilon_0.
            $$

            Now fix $x\in A$ and $m,m'\in \mathbb{Z}$ such that $x[m-n,m+n)=x[m'-n,m'+n)$. The latter implies that $d_1(S^mx,S^{m'}x)<\epsilon_0$, which in turn implies $d(f(S^mx)(0),f(S^{m'}x(0)))<\epsilon$. Since $f$ is an isomorphism, $f(S^mx)=S^mf(x)$ and $f(S^{m'}x)=S^{m'}f(x)$. Thus, we have 
            $$
            d(f(x)(m),f(x)(m'))=d(f(S^mx)(0), d(f(S^{m'}x)(0))<\epsilon.
            $$ 
           \end{proof}
          
        \begin{lemma} \label{L2}
            Let $(X,d)$ be a compact metric space. Let $f$ be an isomorphism between two Toeplitz subsystems $(\overline{O}(z),S)$ and $(\overline{O}(z'),S)$ in $(X^{\mathbb{Z}},S)$ where $z$ and $z'$ are Topelitz sequences. Let $(p_i)$ be a period structure of $z$. Then for all $\epsilon>0$, there is $i_0\in\mathbb{N}$ such that for all $i\ge i_0$ and $k,k'\in\mathbb{Z}$, $z[kp_i,(k+1)p_i)=z[k'p_i,(k'+1)p_i)$ implies that for all $0\le s<p_i$, $d(f(z)(kp_i+s),f(z)(k'p_i+s))<\epsilon$.
        \end{lemma}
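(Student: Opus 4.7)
My plan is to reduce Lemma \ref{L2} to Lemma \ref{L1} by showing that, for $i$ large enough, the block equality $z[kp_i,(k+1)p_i)=z[k'p_i,(k'+1)p_i)$ forces equality on the larger windows $z[kp_i+s-n,kp_i+s+n)=z[k'p_i+s-n,k'p_i+s+n)$ required by Lemma \ref{L1}. Fix $\epsilon>0$. First, I would apply Lemma \ref{L1} to obtain an $n\in\mathbb{N}$ such that whenever $x\in \overline{O}(z)$ and $z[m-n,m+n)=z[m'-n,m'+n)$ for some $m,m'$, we have $d(f(z)(m),f(z)(m'))<\epsilon$. Then I would use the Toeplitz assumption to choose $i_0$ such that $[-n,n)\subseteq\mathrm{Per}_{p_{i_0}}(z)$ and $p_{i_0}\ge 2n$. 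This is possible because $\mathrm{Per}(z)=\mathbb{Z}$ (so the finite window $[-n,n)$ lies in some $\mathrm{Per}_{p_j}(z)$) and the $p_i$ tend to infinity (otherwise $z$ would be periodic, contradicting the existence of a period structure). Since $p_{i_0}\mid p_i$ for $i\ge i_0$, we have $\mathrm{Per}_{p_{i_0}}(z)\subseteq \mathrm{Per}_{p_i}(z)$, so both properties survive passage to larger $i$.

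Next, assume $i\ge i_0$, $z[kp_i,(k+1)p_i)=z[k'p_i,(k'+1)p_i)$, and fix $0\le s<p_i$. To verify the window condition needed by Lemma \ref{L1}, I would check that $z(kp_i+s+t)=z(k'p_i+s+t)$ for every $t\in[-n,n)$, by splitting on the value of $s+t$. If $s+t\in[0,p_i)$, both positions lie inside the central blocks and equality is the assumption. If $s+t\in[-n,0)$, then $s+t\in\mathrm{Per}_{p_i}(z)$, so the periodicity gives $z(kp_i+s+t)=z(s+t)=z(k'p_i+s+t)$. If $s+t\in[p_i,p_i+n)$, then $s+t-p_i\in[0,n)\subseteq\mathrm{Per}_{p_i}(z)$, so $z(kp_i+s+t)=z(s+t-p_i)=z(k'p_i+s+t)$. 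The assumption $p_i\ge 2n$ ensures these three cases exhaust all possibilities for $t\in[-n,n)$ and $s\in[0,p_i)$. Applying Lemma \ref{L1} with $m=kp_i+s$ and $m'=k'p_i+s$ then yields $d(f(z)(kp_i+s),f(z)(k'p_i+s))<\epsilon$, as required.

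The only conceptual hurdle is the choice of $i_0$: one must simultaneously arrange for the short-range window to lie in a single period class of $z$ and for $p_i$ to dominate the window length so that adjacent blocks align with fully periodic shifts. Once these two conditions are set up together, the case analysis on the three possible ranges of $s+t$ is routine, and the rest is an immediate appeal to Lemma \ref{L1}.
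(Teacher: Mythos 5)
Your proposal is correct and follows essentially the same route as the paper's proof: both invoke Lemma \ref{L1} to obtain the window size $n$, choose $i_0$ so that $[-n,n)\subseteq\mathrm{Per}_{p_{i_0}}(z)$, and then verify that the $2n$-windows around $kp_i+s$ and $k'p_i+s$ agree (the paper by concatenating the block equality with the periodically determined flanking segments, you by the equivalent pointwise case split on $s+t$). The extra requirement $p_{i_0}\ge 2n$ is not actually needed — your three cases already exhaust $[-n,p_i+n)$ for any $p_i$ — but it is harmless.
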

        \begin{proof}
            Take the $n$ in Lemma \ref{L1}, we look at  $z[-n,n)$, since $z$ is Toeplitz, there exists $i_0\in \mathbb{N}$ such that $[-n,n)\subset {\rm Per}_{p_{i_0}}(z)$. This means for any $i\geq i_0$ and $k\in \mathbb{Z}$, $z[kp_i-n,kp_i+n)=z[-n,n)$. Let $k,k'\in \mathbb{Z}$ be such that $z[kp_i,(k+1)p_i)=z[k'p_i,(k'+1)p_i)$. Since $z[kp_i-n,kp_i+n)=z[-n,n)$ and $z[k'p_i-n,k'p_i+n)=z[-n,n)$, we have that $z[kp_i-n,kp_i)=z[k'p_i-n,k'p_i)$. Similarly, $z[(k+1)p_i-n,(k+1)p_i+n)=z[-n,n)$ and $z[(k'+1)p_i-n,(k'+1)p_i+n)=z[-n,n)$, so we have that $z[(k+1)p_i,(k+1)p_i+n)=z[(k'+1)p_i,(k'+1)p_i+n)$. In conclusion, by concatenating the two sequences, we have $z[kp_i-n,(k+1)p_i+n)=z[k'p_i-n,(k'+1)p_i+n)$. In particular for all $0\le s<p_i$, $z[kp_i+s-n,kp_i+s+n)=z[k'p_i+s-n,k'p_i+s+n)$. Take $kp_i+s$ and $k'p_i+s$ to be $m$ and $m'$ respectively, we have that  $d(f(z)(kp_i+s),f(z)(k'p_i+s))<\epsilon$.
        \end{proof}
         
         Recall that the canonical factor map for an Oxtoby system $(\overline{O}(z((p_i),(x_i))),S)$ is $\pi: \overline{O}(z((p_i),(x_i)))\rightarrow \varprojlim \mathbb{Z}_{p_i} $ such that $\pi(y)=(n_i)$ if and only if $y$ has the same $p_i$-skeleton with $S^{n_i}z$ for all $i$. The following definition is new.

         \begin{definition}\label{DF1}
             Let $X$ be a compact metric space, $(x_i)$ be a sequence of elements in $X$ and $(p_i)$ be a fast growing sequence.  Write $\pi: \overline{O}(z((p_i),(x_i)))\rightarrow \varprojlim \mathbb{Z}_{p_i} $ for the canonical factor map. Let $y\in \overline{O}(z((p_i),(x_i))$ and $\pi(y)=(n_i)$.  
             Let $i<j$ be natural numbers. We call an interval $[kp_i-n_i,(k+1)p_i-n_i)$ a \textbf{$p_j$-$p_i$-piece} of $y$ if for all $n\in [kp_i-n_i,(k+1)p_i-n_i)$ we have that if $n\not \in {\rm Per}_{p_i}(y)$, then $n\in {\rm Per}_{p_j}(y)\setminus {\rm Per}_{p_{j-1}}(y)$.
         \end{definition}

        In the following lemma, we show that for every $k,i$, every interval of the form $[kp_i-n_i,(k+1)p_i-n_i)$ a $p_j$-$p_i$-piece of $y$ for some $j$. We also show  that $j$ is unique, although the latter will not be used.

         \begin{lemma}\label{UNI}
             Let $X$ be a compact metric space, $(x_i)$ be a sequence of elements in $X$ and $(p_i)$ be a fast growing sequence, $z=z((p_i),(x_i))$ be an Oxtoby sequence and let $\pi$ be the canonical factor map of $(\overline{O}(z((p_i),(x_i)),S)$. Now suppose $y\in \overline{O}(z)$ is a Topelitz sequence such that $\pi(y)=(n_i)$. Then for all $k\in \mathbb{Z}$ and $i\in \mathbb{N}$, there is a $j>i$ such that $[kp_i-n_i,(k+1)p_i-n_i)$ is a $p_j$-$p_i$-piece of $y$ and $j$ is unique.
         \end{lemma}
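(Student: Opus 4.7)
The plan is to pull the $p$-periodicity structure of $y$ back to the underlying Oxtoby sequence $z = z((p_i),(x_i))$ and exploit the fact that each block $J(i,k')$ is filled wholesale, at a single step of the construction, with a single value $x_s$. Since $y$ and $S^{n_j}z$ share the same $p_j$-skeleton, a direct computation gives ${\rm Per}_{p_j}(y) = {\rm Per}_{p_j}(z) - n_j$; in particular $n \in {\rm Per}_{p_j}(y)$ iff $n + n_j \in {\rm Per}_{p_j}(z)$. Specialising to $j = i$, this identifies the set of $p_i$-aperiodic positions of $y$ lying in $[kp_i - n_i, (k+1)p_i - n_i)$ with $\tilde J(i,k) := J(i,k) - n_i$.

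The heart of the argument is to show that every point of $\tilde J(i,k)$ becomes $p_j$-periodic in $y$ at one common step $j$. Set $m_j := (n_j - n_i)/p_i \in \mathbb{Z}$, which is well-defined since $n_j \equiv n_i \pmod{p_i}$. For any $r_0 \in J(i,k)$, the corresponding point $n_0 = r_0 - n_i$ lies in ${\rm Per}_{p_j}(y)$ iff $r_0 + m_j p_i \in {\rm Per}_{p_j}(z)$. A short induction on $i$, using that each step-$s$ filling ($s \le i$) is $p_s$-periodic, shows the defined positions of $z$ at the end of step $i$ form a $p_i$-periodic set, so $J(i,k') = J(i,0) + k'p_i$ for every $k'$; hence $r_0 + m_j p_i$ ranges bijectively over $J(i, k+m_j)$ as $r_0$ ranges over $J(i,k)$. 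I will then invoke the following local fact: as long as $J(i, k')$ remains unfilled through step $s-1$, Lemma \ref{OL1} forces $J(i,k')$ to sit inside a single level-$(s-1)$ block $J(s-1, k^{(s-1)})$, and at the first step $s$ with $k^{(s-1)} \equiv 0$ or $-1 \pmod{p_s/p_{s-1}}$ the whole of $J(s-1, k^{(s-1)})$, and a fortiori $J(i,k')$, is assigned the value $x_s$; call this step $\ell(i,k')$, so every position of $J(i,k')$ carries the common value $x_{\ell(i,k')}$. Applying this to the $p_j$-shifts $r_0 + m_j p_i + mp_j \in J(i,\, k+m_j + mp_j/p_i)$, $m \in \mathbb{Z}$, yields
\begin{equation*}
z\!\left(r_0 + m_j p_i + m p_j\right) = x_{\ell(i,\, k+m_j + mp_j/p_i)},
\end{equation*}
a value depending on $m$ alone and not on the chosen $r_0 \in J(i,k)$. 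Therefore the condition $r_0 + m_j p_i \in {\rm Per}_{p_j}(z)$ holds either for every $r_0 \in J(i,k)$ or for none.

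Since $y$ is Toeplitz, each $n_0 \in \tilde J(i,k)$ lies in ${\rm Per}_{p_j}(y)$ for some $j$, necessarily $j > i$ by $p_i$-aperiodicity, and the invariance just established makes the smallest such $j_0$ common to all of $\tilde J(i,k)$; that $j_0$ realises the interval as a $p_{j_0}$-$p_i$-piece. Uniqueness is then immediate: the fast growing condition ($p_1 \ge 3$ and $p_{s+1}/p_s \ge 3$) gives by induction that every $J(s,k')$ is nonempty, since each $p_{s+1}$-block has a middle $p_s$-subblock whose $J$ is untouched at step $s+1$; hence $\tilde J(i,k) \ne \emptyset$, and the sets ${\rm Per}_{p_j}(y) \setminus {\rm Per}_{p_{j-1}}(y)$ are pairwise disjoint across $j$, so two distinct $j$'s cannot both work. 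The main obstacle I anticipate is purely the bookkeeping of shifts: keeping $n_i$, $n_j$ and $m_j p_i$ aligned while using the $p_i$-periodicity of the $J$-structure together with the uniform one-step filling of each $J$-block is where the care must be concentrated.
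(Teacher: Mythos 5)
Your proof is correct and follows essentially the same route as the paper's: reduce the statement about $y$ to the underlying Oxtoby sequence $z$ via the shared $p_j$-skeleton (so that the interval in question corresponds to an interval $[k'p_i,(k'+1)p_i)$ for $z$), and then use Lemma \ref{OL1} to see that each block $J(i,k')$ is filled wholesale at a single later step with a single value, which yields both existence and the homogeneity across $J(i,k)$. Your handling of uniqueness (via the nonemptiness of every $J(i,k)$, guaranteed by the fast-growing condition, together with the disjointness of the sets ${\rm Per}_{p_j}(y)\setminus {\rm Per}_{p_{j-1}}(y)$) is actually more explicit than the paper's, which asserts uniqueness without spelling this point out.
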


         \begin{proof}
             Fix $k$ and $i$. Let $j$ be the least natural number such that 
             \begin{equation}\label{UNI1}
                 [kp_i,(k+1)p_i)\subset {\rm Per}_{p_j}(z)
             \end{equation} 
             such $j$ exists since $z$ is Toeplitz and the fact that $(p_i)$ is a fast growing sequence. For all $i\leq j'<j$, since $p_i\mid p_{j'}$, we can find $k'$ such that $[kp_i,(k+1)p_i)\subset [k'p_{j'},(k'+1)p_{j'})$. By the choice of $j$, we have $[kp_i,(k+1)p_i)\not \subset {\rm Per}_{p_{j'}}(z)$. Thus, by Lemma \ref{OL1}, we have 
             \begin{equation}\label{UNI2}
                 [kp_i,(k+1)p_i)\cap J(j',k')=J(i,k)
             \end{equation}
              By (\ref{UNI1}) and (\ref{UNI2}), we have $ J(i,k)\subset {\rm Per}_{p_j}(z)\setminus {\rm Per}_{p_{j-1}}(z)$. Thus, the lemma is true for $z$. 
             
             Now for $y\in \overline{O}(z)$, fix $k$ and $i$. Let $j$ be the least natural number such that 
             $$
             [kp_i-n_i,(k+1)p_i-n_i)\subset {\rm Per}_{p_j}(y).
             $$ 
             By the definition of canonical factor map, we know $y$ has the same $p_j$-skeleton as $S^{n_j}z$, thus, 
             $$
             y[kp_i-n_i,(k+1)p_i-n_i)=z[kp_i-n_i+n_j,(k+1)p_i-n_i+n_j).
             $$
             Since $n_j\equiv n_i$ (mod $p_i$), the interval  
             $$
             [kp_i-n_i+n_j,(k+1)p_i-n_i+n_j)
             $$
             could be written as the form
             $$
             [k'p_t,(k'+1)p_t)
             $$
             for some $k'\in \mathbb{Z}$. Since the lemma is true for $z$, we know $[kp_i-n_i,(k+1)p_i-n_i)$ is a $p_j$-$p_i$-piece of $y$. The lemma is also true for $y$. 
         \end{proof}

         \begin{lemma}\label{SUB}
             Let $X$ be a compact metric space, $(x_i)$ be a sequence of elements in $X$ and $(p_i)$ be a fast growing sequence, $z=z((p_i),(x_i))$ be an Oxtoby sequence  Let $y\in \overline{O}(z)$, then for any $l>i\in \mathbb{N}$, the set ${\rm Per}_{p_l}(y)\setminus {\rm Per}_{p_{l-1}}(y)$ is a subset of the union of all $p_l$-$p_i$-pieces of $y$.
         \end{lemma}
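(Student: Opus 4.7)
My plan is to reduce to Lemma \ref{UNI} by comparing $y$ with a well-chosen Toeplitz sequence in $\overline{O}(z)$. Fix $l>i$ and $n\in {\rm Per}_{p_l}(y)\setminus {\rm Per}_{p_{l-1}}(y)$. Let $(n_j)=\pi(y)$, and let $I=[kp_i-n_i,(k+1)p_i-n_i)$ be the unique interval of this form containing $n$. My goal is to show that $I$ is itself a $p_l$-$p_i$-piece of $y$; the conclusion of the lemma then follows immediately, since $n\in I$ was arbitrary in ${\rm Per}_{p_l}(y)\setminus {\rm Per}_{p_{l-1}}(y)$.

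To get the Toeplitz structure needed to apply Lemma \ref{UNI}, I set $y'=S^{n_l}z$. From the definition of the canonical factor map together with the compatibility $n_l\equiv n_j\,({\rm mod}\,p_j)$ for $j\le l$, $\pi(y')$ agrees with $\pi(y)$ in coordinates $\le l$; consequently the partition of $\mathbb{Z}$ into intervals $[kp_j-n_j,(k+1)p_j-n_j)$ is the same for $y$ and $y'$ at every level $j\le l$, and in particular $I$ appears in both partitions. Furthermore, $y$ and $y'$ share their $p_l$-skeleton, hence ${\rm Per}_{p_m}(y)={\rm Per}_{p_m}(y')$ for every $m\le l$. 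Applying Lemma \ref{UNI} to the Toeplitz sequence $y'$ yields a unique $j>i$ such that $I$ is a $p_j$-$p_i$-piece of $y'$.

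It remains to identify $j=l$. Since $i\le l-1$ we have ${\rm Per}_{p_i}(y')\subseteq {\rm Per}_{p_{l-1}}(y')$, so $n\notin {\rm Per}_{p_i}(y')$, and the defining property of a $p_j$-$p_i$-piece forces $n\in {\rm Per}_{p_j}(y')\setminus {\rm Per}_{p_{j-1}}(y')$. If $j<l$, then $p_j\mid p_{l-1}$ gives $n\in {\rm Per}_{p_{l-1}}(y')$, contradicting our assumption; if $j>l$, then $p_l\mid p_{j-1}$ gives $n\in {\rm Per}_{p_{j-1}}(y')$, again a contradiction. Hence $j=l$, so $I$ is a $p_l$-$p_i$-piece of $y'$, and since the defining property of a $p_l$-$p_i$-piece only refers to ${\rm Per}_{p_i}(\cdot)$, ${\rm Per}_{p_{l-1}}(\cdot)$, ${\rm Per}_{p_l}(\cdot)$, all of which agree for $y$ and $y'$, $I$ is also a $p_l$-$p_i$-piece of $y$.

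I do not expect a serious obstacle here: the entire content is a translation of Lemma \ref{UNI} along the auxiliary Toeplitz point $y'=S^{n_l}z$. The only subtlety worth flagging is that Lemma \ref{UNI} is stated only for Toeplitz sequences, which is why the detour through $y'$ is required instead of applying it to $y$ (which may itself fail to be Toeplitz); but the agreement of $\pi(y)$ and $\pi(y')$ below level $l$, together with the nestedness of the sets ${\rm Per}_{p_m}$, makes the transfer automatic.
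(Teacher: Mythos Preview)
Your proof is correct and follows essentially the same strategy as the paper's: both arguments pin down the $p_j$-$p_i$-piece containing a given position via Lemma \ref{UNI} and then compare $j$ with $l$ using the nestedness ${\rm Per}_{p_a}\subseteq{\rm Per}_{p_b}$ for $a\le b$. The paper runs the argument contrapositively (start from $n$ not in any $p_l$-$p_i$-piece, conclude $n\notin{\rm Per}_{p_l}(y)\setminus{\rm Per}_{p_{l-1}}(y)$), while you run it directly; this is only a cosmetic difference.

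Where your write-up actually improves on the paper is the detour through $y'=S^{n_l}z$. Lemma \ref{UNI} is stated only for Toeplitz $y$, and the paper's proof invokes it for an arbitrary $y\in\overline{O}(z)$ without comment. Your transfer via the shared $p_l$-skeleton makes this step rigorous: the $p_m$-periodic sets for $m\le l$ coincide for $y$ and $y'$, the $i$-th coordinate of $\pi(y')$ equals $n_i$, and the defining property of a $p_l$-$p_i$-piece only involves ${\rm Per}_{p_i}$, ${\rm Per}_{p_{l-1}}$, ${\rm Per}_{p_l}$, so the conclusion carries back to $y$ exactly as you say.
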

         \begin{proof}
             Let $\pi$ be the canonical factor map of $(\overline{O}(z((p_i),(x_i)),S)$ and $\pi(y)=(n_i)$. Suppose $n$ is not in the union of all $p_l$-$p_i$-pieces of $y$. Since $\mathbb{Z}=\bigcup_k[kp_i-n_i,(k+1)p_i-n_i)$, by Lemma \ref{UNI}, $n$ belongs to a $p_j$-$p_i$-piece of $y$ for some $j\not =l$. By the definition of $p_j$-$p_i$-piece of $y$, we have that $n\in {\rm Per}_{p_i}(y)$ or $n\in {\rm Per}_{p_j}(y)\setminus {\rm Per}_{p_{j-1}}(y)$. The latter implies that $n\not \in {\rm Per}_{p_l}(y)\setminus {\rm Per}_{p_{l-1}}(y)$.
         \end{proof}

         For instance, let $z=z((p_i),(x_i))$ be an Oxtoby sequence, by the construction of Oxtoby sequence, given $i\geq 1$, all $p_{i+1}$-$p_i$-pieces of $z$ are $\{[-p_i+kp_{i+1},p_i+kp_{i+1})\}_{k\in \mathbb{Z}}$ (see Figure 2).

\begin{figure}[h!]
\centering
\begin{tikzpicture}
\usetikzlibrary{math}

\usetikzlibrary{decorations.pathreplacing}

\node (x0) at (-0.6, 0.0)    {} ;
\node (x0) at (-1, 0.0)    {} ;
\node (x0) at (-1.4, 0.0)    {} ;
\node (00) at ( 0, 0.0)    {} ;
\node (01) at ( 0.4, 0.0)    {} ;

\node (04) at ( 2.2, 0.0)    {$p_{i+1}$-$p_i$-piece} ;

\node (07) at ( 2.8, 0.0)    {} ;

\node (10) at ( 4.6, 0.0)    {$p_{i+1}$-$p_i$-piece} ;

\node (13) at (7 , 0.0)    {$p_{i+2}$-$p_i$-piece} ;

\node (16) at ( -0.2, 0.0)    {$p_{i+2}$-$p_i$-piece} ;

\node (20) at ( 7.4, 0.0)    {} ;
\node (20) at ( -1.8, 0.0)    {$\cdots$} ;
\node (20) at ( 8.6, 0.0)    {$\cdots$} ;
\node      at (3.4, -1.15) {0}       ;
\node      at (5.8, -1.15) {$p_i$}       ;
\node      at (1, -1.15) {$-p_i$}       ;
\node      at (8.2, -1.15) {$2p_i$} ;
\node      at (-1.4, -1.15) {$-2p_i$};
\draw[-] (-3, 0.3) -- (25 * 0.4, 0.3) ;
\draw[-] (-3, -0.3) -- (25 * 0.4, -0.3) ;

\draw[-] (3.4, -0.3) -- (3.4, 0.3) ;
\draw[-] (5.8, -0.3) -- (5.8, 0.3) ;
\draw[-] (-1.4, -0.3) -- (-1.4, 0.3);
\draw[-] (8.2, -0.3) -- (8.2, 0.3) ;
\draw[-] (1, -0.3) -- (1, 0.3) ;
\draw[->] (3.4, -0.3-0.5) -- (3.4, 0.3-0.75) ;
\draw[->] (5.8, -0.3-0.5) -- (5.8, 0.3-0.75) ;
\draw[->] (1, -0.3-0.5) -- (1, 0.3-0.75) ;
\draw[->] (8.2, -0.8) -- (8.2, -0.45) ;
\draw[->] (-1.4, -0.8) -- (-1.4, -0.45);
  
\end{tikzpicture}
\caption{$p_j$-$p_i$-pieces of $z$}

\end{figure}
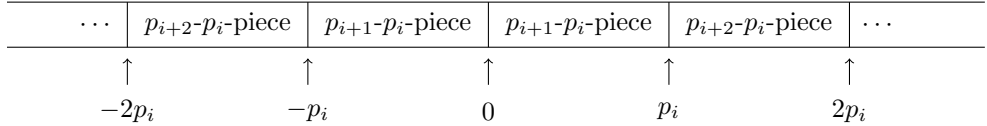

        \begin{lemma}\label{L7}
             Let $X$ be a compact metric space, $(x_i)$ be a sequence of elements in $x$. Let $(\overline{O}(z((p_i),(x_i)),S)$ be an Oxtoby system. For all $i<j\in \mathbb{N}$ there exists $l\in \mathbb{Z}$ such that $[lp_i,(l+1)p_i)$ is a $p_j$-$p_i$-piece of $z((p_i),(x_i))$.
        \end{lemma}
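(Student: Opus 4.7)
The plan is to translate the condition ``$[lp_i,(l+1)p_i)$ is a $p_j$-$p_i$-piece of $z:=z((p_i),(x_i))$'' into a question about when the set $J(i,l)$ gets filled during the inductive construction. Since the positions in $[lp_i,(l+1)p_i)\setminus\mathrm{Per}_{p_i}(z)$ are exactly those that had not been defined by the end of step $i$, namely $J(i,l)$, the piece condition is equivalent to
\[
J(i,l)\;\subseteq\;\mathrm{Per}_{p_j}(z)\setminus\mathrm{Per}_{p_{j-1}}(z),
\]
i.e.\ every position in $J(i,l)$ gets filled exactly at step $j$.

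Writing $r_m:=p_m/p_{m-1}$ and iterating Lemma \ref{OL1}, I would show that $J(i,l)$ travels upward through the construction as a single unit: for each $s\ge 0$, either $J(i,l)$ is entirely emptied at step $i+s+1$, or it sits inside $J(i+s+1,l_{s+1})$, where $l_0:=l$ and $l_{s+1}:=\lfloor l_s/r_{i+s+1}\rfloor$. The emptying at step $i+s+1$ occurs precisely when $l_s\equiv 0$ or $-1\pmod{r_{i+s+1}}$. (A parallel induction, using $r_m\ge 3$ to keep at least one middle sub-block alive at each stage, shows $J(i,l)$ is never already empty.) Thus the lemma reduces to producing $l\in\mathbb Z$ for which the first $s$ satisfying $l_s\equiv 0$ or $-1\pmod{r_{i+s+1}}$ is $s=j-i-1$.

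With that reduction in hand, I would simply exhibit such an $l$. For $j=i+1$ take $l=0$. For $j\ge i+2$ take
\[
l\;=\;1+r_{i+1}+r_{i+1}r_{i+2}+\cdots+r_{i+1}r_{i+2}\cdots r_{j-2}.
\]
An immediate computation gives $l_s=1+r_{i+s+1}+r_{i+s+1}r_{i+s+2}+\cdots+r_{i+s+1}\cdots r_{j-2}$ for $0\le s\le j-i-2$, so $l_s\equiv 1\pmod{r_{i+s+1}}$, which is neither $0$ nor $-1$ modulo $r_{i+s+1}$ because $r_{i+s+1}\ge 3$; and $l_{j-i-1}=0$, giving the required residue at the final stage.

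There is no serious obstacle. The only real work is the bookkeeping in the middle paragraph, i.e.\ cleanly identifying $[lp_i,(l+1)p_i)\setminus\mathrm{Per}_{p_i}(z)$ with $J(i,l)$ and verifying via Lemma \ref{OL1} that $J(i,l)$ migrates as a single block through successive steps of the construction. Once that is set up, the explicit choice of $l$ finishes the argument in one line.
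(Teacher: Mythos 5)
Your proof is correct, but it takes a genuinely different route from the paper's. Both arguments rest on the same two ingredients -- the identification of $[lp_i,(l+1)p_i)\setminus{\rm Per}_{p_i}(z)$ with $J(i,l)$, and Lemma \ref{OL1} -- but they diverge after that. The paper works inside the single block $[0,p_{j-1})$: that block is entirely filled by step $j$ (since $0\equiv 0 \bmod p_j/p_{j-1}$, all of $J(j-1,0)$ is assigned $x_j$), yet it is not wholly contained in ${\rm Per}_{p_{j-1}}(z)$ because $(p_i)$ is fast growing; hence by Lemma \ref{OL1} some length-$p_i$ sub-block $[lp_i,(l+1)p_i)\subset[0,p_{j-1})$ satisfies $[lp_i,(l+1)p_i)\cap J(j-1,0)=J(i,l)$, and that sub-block is the desired piece. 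This is a short, non-constructive existence argument. You instead track $J(i,l)$ upward through every step of the construction (which amounts to iterating Lemma \ref{OL1} one level at a time) and exhibit the witness explicitly via the mixed-radix index $l=1+r_{i+1}+\cdots+r_{i+1}\cdots r_{j-2}$, arranging each ``digit'' $l_s$ to be $1$ -- hence neither $0$ nor $-1$ modulo $r_{i+s+1}$, which is where $r_m\ge 3$ is used -- until the final stage, where $l_{j-i-1}=0$. Your computations check out, including the degenerate cases $j=i+1$ and $s=j-i-2$. The extra bookkeeping buys something the paper's argument does not give directly: your dichotomy determines, for each $l$, the unique $j$ for which $[lp_i,(l+1)p_i)$ is a $p_j$-$p_i$-piece, which is essentially the content of Lemma \ref{UNI} as well. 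One shared caveat (not a gap specific to you, since the paper makes the same identification throughout, e.g.\ in Lemma \ref{I+1}): equating ${\rm Per}_{p_i}(z)\cap[lp_i,(l+1)p_i)$ with the positions defined by the end of step $i$ implicitly assumes no accidental periodicity coming from coincidences among the values $x_i$.
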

        
        \begin{proof}
           Write $z$ for $z((p_i),(x_i))$. Note that $\pi(z)=(0)$, so we need to find an $l\in \mathbb{Z}$ such that for all $n\in [lp_i,(l+1)p_i)$, if $n\not \in {\rm Per}_{p_i}(z)$, then $n\in {\rm Per}_{p_j}(z)\setminus {\rm Per}_{p_{j-1}}(z)$.  For every $0\leq k< \frac{p_{j-1}}{p_i}$ we apply Lemma \ref{OL1} with $k'=0$ and obtain that $[kp_i,(k+1)p_i)\subset {\rm Per}_{p_{j-1}}(z)$ or $[kp_i,(k+1)p_i)\cap J(j-1,0)=J(i,k)$. Since the sequence $(p_i)$ is fast growing, we know that $ [0,p_{j-1}) \not \subset {\rm Per}_{p_{j-1}}(z)$, so there exists $0\leq l< \frac{p_{j-1}}{p_i}$ such that $[lp_i,(l+1)p_i)\cap J(j-1,0)=J(i,l)$. Let $n\in [lp_i,(l+1)p_i)$ and suppose $n \not \in {\rm Per}_{p_i}(z)$. Since $[lp_i,(l+1)p_i)\subset [0,p_{j-1})$ and $[0,p_{j-1})\subset {\rm Per}_{p_j}(z)$, we have $n\in {\rm Per}_{p_j}(z)$. But $[lp_i,(l+1)p_i)\cap J(j-1,0)=J(i,l)$ implies that $[lp_i,(l+1)p_i)$ is disjoint with ${\rm Per}_{p_{j-1}}(z)\setminus {\rm Per}_{p_i}(z)$.  Thus $n\not \in {\rm Per}_{p_{j-1}}(z)$, we are done.
             
        \end{proof}
       
        \begin{lemma} \label{L6}
            Let $X$ be a compact metric space, $(x_i)$ be a sequence of elements in $x$. Let $y$ be an element in an Oxtoby system $(\overline{O}(z((p_i),(x_i)),S)$. Let $\pi$ be the canonical factor map of $(\overline{O}(z((p_i),(x_i)),S)$. Suppose $\pi(y)=(n_i)$, then for all $i< j$, there exists $l\in \mathbb{N}$ such that $[lp_{i}-n_{j},(l+1)p_{i}-n_{j})$ is a $p_j$-$p_i$-piece of $y$.
        \end{lemma}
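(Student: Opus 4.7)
The plan is to deduce Lemma \ref{L6} from Lemma \ref{L7} by transferring the existence of a $p_j$-$p_i$-piece from the sequence $z := z((p_i),(x_i))$ to an arbitrary point $y \in \overline{O}(z)$, using that the canonical factor map identifies precisely the $p_j$-skeleton information.

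First I would observe that the property of being a $p_j$-$p_i$-piece is entirely determined by the $p_j$-skeleton of the sequence. Indeed, whether a position $n$ lies in $\mathrm{Per}_{p_t}(w)$ for $t \le j$ depends only on the values of $w$ at positions $n + k p_t$, all of which lie in $\mathrm{Per}_{p_j}(w)$ whenever $n$ does (since $p_t \mid p_j$), together with the values there. Consequently, if two sequences have the same $p_j$-skeleton, they have identical $\mathrm{Per}_{p_i}$, $\mathrm{Per}_{p_{j-1}}$ and $\mathrm{Per}_{p_j}$ sets, and hence the same $p_j$-$p_i$-pieces.

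Next, I would invoke Lemma \ref{L7} to obtain $l_0 \in \mathbb{Z}$ such that $[l_0 p_i,(l_0+1)p_i)$ is a $p_j$-$p_i$-piece of $z$. Since the $\mathrm{Per}_{p_t}$ sets of $z$ are all $p_j$-periodic (whenever $t \le j$, as $p_t \mid p_j$), the $p_j$-$p_i$-piece property is itself $p_j$-periodic in the index $l_0$; translating by a large positive multiple of $p_j/p_i$ if necessary, we may assume $l_0$ is as large as we wish. Translating by $n_j$, the interval $[l_0 p_i - n_j,(l_0+1)p_i - n_j)$ is then a $p_j$-$p_i$-piece of $S^{n_j} z$.

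Now I would transfer this to $y$: by the definition of the canonical factor map, $\pi(y)=(n_i)$ means $y$ has the same $p_j$-skeleton as $S^{n_j} z$, so by the opening observation the interval $[l_0 p_i - n_j,(l_0+1)p_i - n_j)$ is a $p_j$-$p_i$-piece of $y$. Finally I would check the indexing: since $(n_i)$ lies in the inverse limit we have $n_j \equiv n_i \pmod{p_i}$, so $l_0 p_i - n_j = (l_0 - m) p_i - n_i$ where $m = (n_j - n_i)/p_i \in \mathbb{Z}$. This matches the form $[k p_i - n_i,(k+1)p_i - n_i)$ of Definition \ref{DF1} with $k = l_0 - m$, and by taking $l_0$ large we also arrange $l := l_0 \in \mathbb{N}$. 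There is no real obstacle; the only mildly delicate point is verifying that the $p_j$-$p_i$-piece property is $p_j$-periodic, which is exactly what lets us pass from the $l_0 \in \mathbb{Z}$ produced by Lemma \ref{L7} to an $l \in \mathbb{N}$ satisfying the statement.
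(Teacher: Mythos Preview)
Your proposal is correct and follows essentially the same route as the paper: invoke Lemma~\ref{L7} to get a $p_j$-$p_i$-piece of $z$, shift it by $n_j$ to obtain a $p_j$-$p_i$-piece of $S^{n_j}z$, and then transfer it to $y$ using that $y$ and $S^{n_j}z$ share the same $p_j$-skeleton, finally checking compatibility with the form in Definition~\ref{DF1} via $n_j\equiv n_i\pmod{p_i}$. You are more explicit than the paper in two places: you spell out why the $p_j$-$p_i$-piece condition is determined by the $p_j$-skeleton (the paper just asserts the conclusion), and you use $p_j$-periodicity to arrange $l\in\mathbb{N}$ rather than merely $l\in\mathbb{Z}$ (the paper relies silently on the fact that the $l$ produced in the proof of Lemma~\ref{L7} already satisfies $0\le l<p_{j-1}/p_i$).
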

        \begin{proof}
                 First,  by the definition of a $p_j$-$p_i$-piece in order to show that $[lp_{i}-n_{j},(l+1)p_{i}-n_{j})$ is a $p_j$-$p_i$-piece of $y$ we need to show that $[lp_{i}-n_{j},(l+1)p_{i}-n_{j})$ can be written as $[dp_{i}-n_{i},(d+1)p_{i}-n_{i})$ for some $d\in \mathbb{Z}$. The latter follows from the fact that $n_i\equiv n_j$ mod $p_i$.

                  By Lemma \ref{L7}, for all $i<j$, there exists $l\in \mathbb{Z}$ such that $[lp_{i},(l+1)p_{i})$ is a $p_j$-$p_{i}$-piece of $z$. By the definition of the shift map, we know 
              $$
              S^{n_j}z[lp_{i}-n_j,(l+1)p_{i}-n_j)=z[lp_{i},(l+1)p_{i}).
              $$ 
              Since $y$ has the same $p_{j}$-skeleton as $S^{n_{j}}z$, we have
              $$
              y[lp_{i}-n_{j},(l+1)p_{i}-n_{j})=S^{n_j}z[lp_{i}-n_{j},(l+1)p_{i}-n_{j}).
              $$
            Thus, $[lp_{i}-n_{j},(l+1)p_{i}-n_{j})$ is a $p_j$-$p_{i}$-piece of $y$.
        \end{proof}
            Now let us do some computation about $p_j$-$p_i$-pieces for $j=i+1$ and $i+2$.
            \begin{lemma}\label{I+1}
                  Let $X$ be a compact metric space, $(x_i)$ be a sequence of elements in $x$. Let $y$ be an element in an Oxtoby system $(\overline{O}(z((p_i),(x_i)),S)$. Let $\pi$ be the canonical factor map of $(\overline{O}(z((p_i),(x_i))),S)$. Suppose $\pi(y)=(n_i)$ and let $I$ be the interval $[mp_i-n_i,(m+1)p_i-n_i)$ of length $p_i$, then for all $i\in \mathbb{N}$, the following are equivalent:
                  
                  \begin{enumerate}
                      \item $I$ is a $p_{i+1}$-$p_i$-piece of $y$,
                      \item  there exists $k\in \mathbb{Z}$ such that
                      \begin{itemize}
                          \item $I= [kp_{i+1}-n_{i+1},kp_{i+1}-n_{i+1}+p_i)$ or
                          \item $I= [kp_{i+1}-n_{i+1}-p_i,kp_{i+1}-n_{i+1})$.
                      \end{itemize}  
                  \end{enumerate} 
                \end{lemma}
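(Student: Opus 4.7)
The plan is to first observe a simplification of the defining condition. Since $p_i \mid p_{i+1}$, any position that is $p_i$-periodic in $y$ is automatically $p_{i+1}$-periodic, so $\mathrm{Per}_{p_i}(y) \subseteq \mathrm{Per}_{p_{i+1}}(y)$. Consequently, the condition in Definition \ref{DF1} for $j = i+1$ collapses to the simpler requirement that $I \subseteq \mathrm{Per}_{p_{i+1}}(y)$. Since $y$ has the same $p_{i+1}$-skeleton as $S^{n_{i+1}} z$ (where $z := z((p_i),(x_i))$), this is further equivalent to $I + n_{i+1} \subseteq \mathrm{Per}_{p_{i+1}}(z)$.

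Next I use that $n_{i+1} \equiv n_i \pmod{p_i}$, which follows from the definition of the inverse limit, to write $n_{i+1} - n_i = a p_i$ for some $a \in \mathbb{Z}$. Then $I + n_{i+1} = [(m+a)p_i, (m+a+1)p_i)$, which is a $p_i$-aligned block. So the problem reduces to a statement purely about $z$: I need to characterize which $p_i$-aligned blocks $[\ell p_i, (\ell+1) p_i)$ are contained in $\mathrm{Per}_{p_{i+1}}(z)$.

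For this I unpack the Oxtoby construction. At the $(i+1)$-th step, one fills $J(i, k)$ with the value $x_{i+1}$ for precisely those $k$ with $k \equiv -1$ or $0 \pmod{p_{i+1}/p_i}$, rendering those positions $p_{i+1}$-periodic. Since $(p_i)$ is fast growing one can check inductively that $J(i,k) \neq \emptyset$ for all $i \geq 1$ and all $k$, so a $p_i$-aligned block $[\ell p_i, (\ell+1) p_i)$ contains a position not yet in $\mathrm{Per}_{p_i}(z)$; such a position lies in $\mathrm{Per}_{p_{i+1}}(z)$ iff it is filled in step $i+1$, iff $\ell \equiv -1$ or $0 \pmod{p_{i+1}/p_i}$. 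Writing these two alternatives as $\ell p_i = k p_{i+1}$ and $\ell p_i = k p_{i+1} - p_i$ for some $k \in \mathbb{Z}$ gives the two forms $[k p_{i+1}, k p_{i+1} + p_i)$ and $[k p_{i+1} - p_i, k p_{i+1})$ for $I + n_{i+1}$.

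Translating back by $-n_{i+1}$ yields exactly the two alternatives listed in (2) for $I$, proving (1)$\Rightarrow$(2). The converse is immediate: intervals of either of the two stated forms translate by $n_{i+1}$ to $p_i$-aligned blocks of the distinguished shapes, and hence lie in $\mathrm{Per}_{p_{i+1}}(z)$, so $I \subseteq \mathrm{Per}_{p_{i+1}}(y)$, which makes $I$ a $p_{i+1}$-$p_i$-piece of $y$. The only real work is the case analysis in the Oxtoby construction showing $J(i,k) \neq \emptyset$, which ensures that containment in $\mathrm{Per}_{p_{i+1}}(z)$ genuinely forces the block to have been filled at step $i+1$ with the correct alignment; everything else is bookkeeping with the congruences $p_i \mid p_{i+1}$ and $n_{i+1} \equiv n_i \pmod{p_i}$.
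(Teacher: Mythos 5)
Your proof is correct and follows essentially the same route as the paper's: reduce to the untranslated Oxtoby sequence $z$ via the $p_{i+1}$-skeleton and the congruence $n_{i+1}\equiv n_i \pmod{p_i}$, observe that the piece condition for $j=i+1$ collapses to containment of the aligned block (equivalently, of $J(i,m)$) in ${\rm Per}_{p_{i+1}}(z)$, and read off the congruence $m\equiv 0$ or $-1 \pmod{p_{i+1}/p_i}$ from the Oxtoby construction. Your explicit remark that $J(i,k)\neq\emptyset$ is a worthwhile addition: the paper leaves this implicit, and it is exactly what prevents a block from being a $p_{i+1}$-$p_i$-piece vacuously in the direction (1)$\Rightarrow$(2).
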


                \begin{proof}
                    Write $z=z((p_i),(x_i))$. Since $y$ has the same $p_{i+1}$-skeleton as $S^{n_{i+1}}z$, it is enough to show the following are equivalent:
                   
                     \begin{enumerate}
                      \item[(1')]$I$ is a $p_{i+1}$-$p_i$-piece of $S^{n_{i+1}}z$,
                      \item[(2')] there exists $k\in \mathbb{Z}$ such that
                      \begin{itemize}
                          \item$I= [kp_{i+1}-n_{i+1},kp_{i+1}-n_{i+1}+p_i)$ or
                          \item $I= [kp_{i+1}-n_{i+1}-p_i,kp_{i+1}-n_{i+1})$.
                     \end{itemize}  
                  \end{enumerate}
                    
                    In other words, we need to show the following are equivalent:
                    \begin{enumerate}
                      \item[(1'')] $[mp_i,(m+1)p_i)$ is a $p_{i+1}$-$p_i$-piece of $z$,
                      \item[(2'')]  there exists $k\in \mathbb{Z}$ such that
                      \begin{itemize}
                          \item $[mp_i,(m+1)p_i)=[kp_{i+1},kp_{i+1}+p_i)$ or 
                          \item $[mp_i,(m+1)p_i)= [kp_{i+1}-p_i,kp_{i+1})$. 
                          \end{itemize}  
                     \end{enumerate}
                
                Note that  by the definition of the Oxtoby sequence, (1'') is equivalent with
                    \begin{enumerate}
                        \item [(1*)]$J(i,m)\subset {\rm Per}_{p_{i+1}}(z)$.
                    \end{enumerate}

                      By the definition of the Oxtoby sequence,  (1*) is equivalent with $m \equiv 0$ or $-1$ mod $\frac{p_{i+1}}{p_i}$. The latter is equivalent to (2''). 
                \end{proof}
                \begin{lemma}\label{I+2}
                    Let $X$ be a compact metric space, $(x_i)$ be a sequence of elements in $X$. Let $y$ be an element in an Oxtoby system $(\overline{O}(z((p_i),(x_i))),S)$. Let $\pi$ be the canonical factor map of $(\overline{O}(z((p_i),(x_i))),S)$. Suppose $\pi(y)=(n_i)$, then for all $i\in \mathbb{N}$, let $I$ be the interval $[mp_i-n_i,(m+1)p_i-n_i)$ of length $p_i$, the following are equivalent:
                    \begin{enumerate}
                         \item  $I$ is a $p_{i+2}$-$p_i$-piece of $y$.
                         \item  there exists $k\in \mathbb{Z}$ such that
                         \begin{itemize}
                             \item $I\subset [kp_{i+2}-n_{i+2}+p_{i},kp_{i+2}-n_{i+2}-p_i+p_{i+1})$
                             or
                             \item $I\subset [kp_{i+2}-n_{i+2}+p_i-p_{i+1},kp_{i+2}-n_{i+2}-p_i)$.
                         \end{itemize}
                    \end{enumerate} 
                \end{lemma}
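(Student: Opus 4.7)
The plan is to mimic the argument of Lemma \ref{I+1}. Write $z := z((p_i),(x_i))$. Since $n_{i+2} \equiv n_i \pmod{p_i}$, the interval $I = [mp_i - n_i, (m+1)p_i - n_i)$ can be rewritten as $[m''p_i - n_{i+2}, (m''+1)p_i - n_{i+2})$ for the unique $m'' = m + (n_{i+2}-n_i)/p_i$. Because $y$ has the same $p_{i+2}$-skeleton as $S^{n_{i+2}}z$, $I$ is a $p_{i+2}$-$p_i$-piece of $y$ iff $I' := [m''p_i, (m''+1)p_i)$ is a $p_{i+2}$-$p_i$-piece of $z$. Moreover, shifting by $-n_{i+2}$ turns the two intervals in (2) into $[kp_{i+2}+p_i,\, kp_{i+2}+p_{i+1}-p_i)$ and $[kp_{i+2}+p_i-p_{i+1},\, kp_{i+2}-p_i)$, so it suffices to prove the analogue of the equivalence for $z$ and $I'$ with these shifted intervals.

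By the definition of a $p_{i+2}$-$p_i$-piece, $I'$ is such a piece of $z$ iff $J(i,m'') \subset \operatorname{Per}_{p_{i+2}}(z) \setminus \operatorname{Per}_{p_{i+1}}(z)$, i.e., the positions of $J(i,m'')$ are filled exactly at step $i+2$ of the Oxtoby construction and no earlier. From the construction itself, $J(i,m'')$ is filled at step $i+1$ precisely when $m'' \equiv 0$ or $-1 \pmod{p_{i+1}/p_i}$, so the requirement "not at step $i+1$" translates to $m'' \not\equiv 0, -1 \pmod{p_{i+1}/p_i}$. Let $k'$ be the unique integer with $I' \subset [k'p_{i+1},(k'+1)p_{i+1})$. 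By Lemma \ref{OL1}, $J(i,m'') = J(i+1,k') \cap I'$, so the requirement "$J(i,m'')$ filled by step $i+2$" becomes "$J(i+1,k')$ filled at step $i+2$", i.e., $k' \equiv 0$ or $-1 \pmod{p_{i+2}/p_{i+1}}$.

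These two congruences say precisely that $I'$ is a $p_i$-subblock lying in the first or last $p_{i+1}$-subblock of some $p_{i+2}$-block $[kp_{i+2},(k+1)p_{i+2})$, and avoiding the extreme $p_i$-pieces at each end of that $p_{i+1}$-subblock. This describes exactly the two shifted intervals identified above, so both implications follow. The main (and only) obstacle is the careful bookkeeping of the half-open endpoints and the indices $k, k'$ linking the two congruence conditions to the two interval inclusions in (2); no new idea is required beyond Lemma \ref{OL1} and the template established in Lemma \ref{I+1}.
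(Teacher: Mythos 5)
Your proposal is correct and follows essentially the same route as the paper: reduce to $z$ via the $p_{i+2}$-skeleton, recast condition (1) as $J(i,m'')\subset {\rm Per}_{p_{i+2}}(z)\setminus {\rm Per}_{p_{i+1}}(z)$, and use Lemma \ref{OL1} to pass from $J(i,m'')$ to $J(i+1,k')$. The paper packages your two congruence conditions as an intermediate statement ($\exists l$ with $J(i,m)\subset J(i+1,l)\subset {\rm Per}_{p_{i+2}}(z)$) and then does the interval bookkeeping you defer by invoking the explicit list of $p_{i+1}$-$p_i$-pieces from Lemma \ref{I+1}, but no step of yours would fail.
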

                \begin{proof}
               
                    Write $z$ for $z((p_i),(x_i))$. Since $y$ has the same $p_{i+2}$-skeleton as $S^{n_{i+2}}z$, it is enough to show the following are equivalent
                    \begin{enumerate}
                      \item[(1')]$I$ is a $p_{i+2}$-$p_i$-piece of $S^{n_{i+2}}z$,
                      \item[(2')] there exists $k\in \mathbb{Z}$ such that
                      \begin{itemize}
                          \item $I\subset [kp_{i+2}-n_{i+2}+p_{i},kp_{i+2}-n_{i+2}-p_i+p_{i+1})$
                          or
                             \item $I\subset [kp_{i+2}-n_{i+2}+p_i-p_{i+1},kp_{i+2}-n_{i+2}-p_i)$
                     \end{itemize}  
                  \end{enumerate}
                    
                    In other words, we need to show the following are equivalent:
                    \begin{enumerate}
                      \item[(1'')] $[mp_i,(m+1)p_i)$ is a $p_{i+2}$-$p_i$-piece of $z$,
                      \item[(2'')]  there exists $k\in \mathbb{Z}$ such that
                      \begin{itemize}
                          \item $[mp_i,(m+1)p_i)\subset [kp_{i+2}+p_{i},kp_{i+2}-p_i+p_{i+1})$ or 
                          \item $[mp_i,(m+1)p_i)\subset   [kp_{i+2}+p_i-p_{i+1},kp_{i+2}-p_i)$.
                     \end{itemize}  
                  \end{enumerate} 
                  Note that by the definition of the Oxtoby sequence, we have $(1'')$ is equivalent with
                  \begin{enumerate}
                      \item[(1*)] $J(i,m) \subset {\rm Per}_{p_{i+2}}(z)\setminus {\rm Per}_{p_{i+1}}(z)$.
                  \end{enumerate}

                  \begin{claim}
                      (1*) is equivalent with 
                  \begin{enumerate}
                      \item[(1**)] $\exists l\in \mathbb{Z}\quad J(i,m) \subset J(i+1,l) \subset {\rm Per}_{p_{i+2}}(z).$
                  \end{enumerate}
                  \end{claim}

                  \begin{proof}
                      ((1**)$\Rightarrow$ (1*)): This follows from the definition of  the Oxtoby sequence and the fact that for all $l\in \mathbb{Z}$, the set $J(i+1,l)$ is disjoint with ${\rm Per}_{p_{i+1}}(z)$. 
                   
                  ((1*)$\Rightarrow$ (1**)): First, since $p_i\mid p_{i+1}$ we can find a unique $l\in \mathbb{Z}$ such that $[mp_i,(m+1)p_i)\subset [lp_{i+1},(l+1)p_{i+1})$. By Lemma \ref{OL1}, $J(i,m)\subset {\rm Per}_{p_{i+1}}(z)$ or $J(i,m)=[mp_i,(m+1)p_i)\cap J(i+1,l)$. Since $J(i,m)$ is disjoint with ${\rm Per}_{p_{i+1}}(z)$, the former is impossible thus the latter must be true, hence we have 
                  \begin{equation}\label{FT}
                       J(i,m) \subset J(i+1,l).
                  \end{equation} Since $p_{i+1}\mid p_{i+2}$, we can find $l'\in \mathbb{Z}$, such that $[lp_{i+1},(l+1)p_{i+1})\subset [l'p_{i+2},(l'+1)p_{i+2})$ and by Lemma \ref{OL1}, we have $J(i+1,l)\subset {\rm Per}_{p_{i+2}}(z)$ or $J(i+1,l)= J(i+2,l')\cap [lp_{i+1},(l+1)p_{i+1})$. 
                  
                  \textbf{Case 1}: $J(i+1,l)\subset {\rm Per}_{p_{i+2}}$. This implies (1**). 
                  
                  \textbf{Case 2}: $J(i+1,l)= J(i+2,l')\cap [lp_{i+1},(l+1)p_{i+1})$. By (\ref{FT}), this implies $J(i,m)$ is a subset of $J(i+2,l')$ which is disjoint with ${\rm Per}_{p_{i+2}}(z)$. This contradicts (1*).
                  \end{proof}

                  We will prove that (1**) implies (2'') and (2'') implies (1'').
                  
                  (1**)$\Rightarrow$ (2''): Let $l$ be such that $J(i,m)\subset J(i+1,l)\subset {\rm Per}_{p_{i+2}}(z)$. First, note that $J(i,m)\subset J(i+1,l)$ implies 
                   \begin{equation}\label{I2}
                        [mp_i,(m+1)p_i)\subset [lp_{i+1},(l+1)p_{i+1}).
                   \end{equation}
                   By the definition of the Oxtoby sequence and $J(i+1,l)\subset {\rm Per}_{p_{i+2}}(z)$, we have $l \equiv 0$ or $l \equiv-1$ mod $\frac{p_{i+2}}{p_{i+1}}$. Thus, there exists $k\in \mathbb{Z}$ such that $l=k\frac{p_{i+2}}{p_{i+1}}$ or $l=k\frac{p_{i+2}}{p_{i+1}}-1$. Thus, by (\ref{I2}), there exists $k\in \mathbb{Z}$ such that
                  \begin{equation}\label{I3}
                      [mp_i,(m+1)p_i)\subset [k\frac{p_{i+2}}{p_{i+1}}p_{i+1},(k\frac{p_{i+2}}{p_{i+1}}+1)p_{i+1})=[kp_{i+2},kp_{i+2}+p_{i+1}]
                  \end{equation}
                  or
                  \begin{equation}\label{I4}
                      [mp_i,(m+1)p_i)\subset [(k\frac{p_{i+2}}{p_{i+1}}-1)p_{i+1},((k\frac{p_{i+2}}{p_{i+1}}-1)+1)p_{i+1})=[kp_{i+2}-p_{i+1},kp_{i+2}].
                  \end{equation}
                   By $J(i,m)\subset J(i+1,l)$ and the fact that $J(i+1,l)$ is disjoint with ${\rm Per}_{p_{i+1}}(z)$, we have that $[mp_i,(m+1)p_i)$ is not a $p_{i+1}$-$p_i$-piece of $z$. By Lemma \ref{I+1}, this implies that for all $n\in \mathbb{Z}$, we have $$
                   [mp_i,(m+1)p_i)\not =[np_{i+1},np_{i+1}+p_i)\quad \mbox{and} \quad [mp_i,(m+1)p_i)\not = [np_{i+1}-p_i,np_{i+1}).
                   $$
                  Since $p_i\mid p_{i+1}$, we have
                   \begin{equation}\label{I5}
                       [mp_i,(m+1)p_i)\cap[np_{i+1},np_{i+1}+p_i)=\emptyset \quad \mbox{and} \quad [mp_i,(m+1)p_i)\cap [np_{i+1}-p_i,np_{i+1})=\emptyset.
                   \end{equation}
                   \textbf{Case 1}: Suppose (\ref{I3}) holds. In (\ref{I5}), taking $n=k\frac{p_{i+2}}{p_{i+1}}$ in the first formula and $n=k\frac{p_{i+2}}{p_{i+1}}+1$ in the second formula, we obtain
                   $$
                   [mp_i,(m+1)p_i)\cap[kp_{i+2},kp_{i+2}+p_i)=\emptyset \quad \mbox{and} \quad [mp_i,(m+1)p_i)\cap [kp_{i+2}-p_i+p_{i+1},kp_{i+2}+p_{i+1})=\emptyset.
                   $$
                    Thus, $[mp_i,(m+1)p_i)$ is a subset of 
                   $$
                   [kp_{i+2},kp_{i+2}+p_{i+1})\setminus ([kp_{i+2},kp_{i+2}+p_i)\cup [kp_{i+2}-p_i+p_{i+1},kp_{i+2}+p_{i+1}))=[kp_{i+2}+p_i,kp_{i+2}-p_i+p_{i+1}).
                   $$ 
                   \textbf{Case 2}: Suppose (\ref{I4}) holds. In (\ref{I5}), taking $n=k\frac{p_{i+2}}{p_{i+1}}$ in the second formula and $n=k\frac{p_{i+2}}{p_{i+1}}-1$ in the first formula,
                   we have 
                    $$
                   [mp_i,(m+1)p_i)\cap[kp_{i+2}-p_i,kp_{i+2})=\emptyset \quad \mbox{and} \quad [mp_i,(m+1)p_i)\cap [kp_{i+2}-p_{i+1},kp_{i+2}+p_i-p_{i+1})=\emptyset.
                   $$
                   Thus, $[mp_i,(m+1)p_i)$ is a subset of 
                  $$
                  [kp_{i+2}-p_{i+1},kp_{i+2}]\setminus [kp_{i+2}-p_{i+1},kp_{i+2}+p_i-p_{i+1})\cup [kp_{i+2}-p_i,kp_{i+2}))=[kp_{i+2}+p_i-p_{i+1},kp_{i+2}-p_i).
                  $$

                  (2'')$\Rightarrow$(1''): When $[mp_i,(m+1)p_i)$ is a subset of
                  $$
                  I_1=[kp_{i+2}+p_{i},kp_{i+2}-p_i+p_{i+1})
                  $$ 
                  or 
                  $$
                  I_2=[kp_{i+2}+p_i-p_{i+1},kp_{i+2}-p_i).
                  $$
                  for some $k$. Since
                  $$
                  [kp_{i+2}+p_{i},kp_{i+2}-p_i+p_{i+1})\subset [kp_{i+2},kp_{i+2}+p_{i+1})
                  $$ 
                  and 
                  $$
                  [kp_{i+2}+p_i-p_{i+1},kp_{i+2}-p_i)\subset [kp_{i+2}-p_{i+1},kp_{i+2}).
                  $$
                  Both $I_1$ and $I_2$ are subsets of $[lp_{i+1},(l+1)p_{i+1})$ where $l \equiv 0$ or $l \equiv-1$ mod $\frac{p_{i+2}}{p_{i+1}}$. By the definition of the Oxtoby sequence, $[mp_i,(m+1)p_i)\subset {\rm Per}_{p_{i+2}}(z)$. By Lemma \ref{I+1}, we know $[mp_i,(m+1)p_i)$ is disjoint with all $p_{i+1}$-$p_i$-pieces. Thus, $[mp_i,(m+1)p_i)$ is not a $p_{i+1}$-$p_i$-piece. In conclusion, $[mp_i,(m+1)p_i)$ is a $p_{i+2}$-$p_i$-piece.
                  
                 \end{proof}
         \begin{lemma}\label{L3}
             Let $X$ be a compact metric space, $(x_i)$ be a sequence of elements in $X$ and $(p_i)$ be a fast growing sequence. Let $y$ be an element in an Oxtoby system $(\overline{O}(z((p_i),(x_i))),S)$ and $n\in {\rm Per}_{p_{i}}(y)\setminus {\rm Per}_{p_{i-1}}(y)$. Then for all $i<j \in \mathbb{N}$, there exists $k$ such that $y(n+kp_{i-1})=x_j$ and $n+kp_{i+1}\in {\rm Per}_{p_j}(y)\setminus {\rm Per}_{p_{j-1}}(y)$.
             
         \end{lemma}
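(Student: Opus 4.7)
The plan is to reduce the statement to an analogous claim about the Oxtoby sequence $z=z((p_i),(x_i))$ itself, and then to pick the desired position from a $p_j$-$p_{i-1}$-piece of $z$ produced by Lemma \ref{L7}. Write $(n_l)=\pi(y)$. Because of the inverse-limit structure, $n_l\equiv n_{l'}\pmod{p_{l'}}$ whenever $l\ge l'$; consequently ${\rm Per}_{p_l}(y)={\rm Per}_{p_l}(S^{n_l}z)={\rm Per}_{p_l}(S^{n_j}z)$ for every $l\le j$, and on this common set the equality $y(m)=(S^{n_j}z)(m)=z(m+n_j)$ holds. This dictionary between $y$ and the shift $S^{n_j}z$ will absorb all the $n_l$-bookkeeping in one stroke.

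Using this dictionary, the hypothesis $n\in{\rm Per}_{p_i}(y)\setminus{\rm Per}_{p_{i-1}}(y)$ translates to $n+n_j\in{\rm Per}_{p_i}(z)\setminus{\rm Per}_{p_{i-1}}(z)$, and in particular $n+n_j\notin{\rm Per}_{p_{i-1}}(z)$. I would now apply Lemma \ref{L7} at the pair $(i-1,j)$ to obtain an integer $l$ such that the interval $[lp_{i-1},(l+1)p_{i-1})$ is a $p_j$-$p_{i-1}$-piece of $z$, i.e.\ every position inside that fails to be in ${\rm Per}_{p_{i-1}}(z)$ automatically lies in ${\rm Per}_{p_j}(z)\setminus{\rm Per}_{p_{j-1}}(z)$.

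Since the piece is an interval of length $p_{i-1}$, it contains a unique position $m$ with $m\equiv n+n_j\pmod{p_{i-1}}$; write $m=n+n_j+kp_{i-1}$. Because ${\rm Per}_{p_{i-1}}(z)$ is $p_{i-1}$-periodic and $n+n_j\notin{\rm Per}_{p_{i-1}}(z)$, also $m\notin{\rm Per}_{p_{i-1}}(z)$; hence the piece property yields $m\in{\rm Per}_{p_j}(z)\setminus{\rm Per}_{p_{j-1}}(z)$, so by the inductive definition of the Oxtoby sequence $z(m)=x_j$. Translating back through the dictionary, $n+kp_{i-1}\in{\rm Per}_{p_j}(y)\setminus{\rm Per}_{p_{j-1}}(y)$ and $y(n+kp_{i-1})=z(m)=x_j$, as required.

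The main obstacle is simply the congruence bookkeeping: one must be careful that shifting by $n_j$ genuinely identifies ${\rm Per}_{p_{i-1}}(y)$ with ${\rm Per}_{p_{i-1}}(z)$ (not just ${\rm Per}_{p_{i-1}}(S^{n_{i-1}}z)$), and likewise at level $j-1$. Once the uniform observation $n_j\equiv n_l\pmod{p_l}$ for $l\le j$ is in hand, every translation step is automatic and the argument reduces to a single application of Lemma \ref{L7} together with the definition of a $p_j$-$p_{i-1}$-piece.
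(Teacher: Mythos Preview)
Your proof is correct and follows essentially the same approach as the paper. The only organizational difference is that the paper packages the translation step into Lemma~\ref{L6} (which produces a $p_j$-$p_{i-1}$-piece of $y$ directly by shifting a piece of $z$), whereas you work in $z$ via Lemma~\ref{L7} and translate back by hand; since Lemma~\ref{L6} is itself proved exactly by that translation, the two arguments coincide.
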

         \begin{proof}

               Write $\pi$ for the maximal equicontinuous factor of  $(\overline{O}(z((p_i),(x_i))),S)$. Let $\pi(y)=(n_i)$.
              
              By Lemma \ref{L6}, for all $j\geq i$, there exists $l\in \mathbb{Z}$ such that $[lp_{i-1}-n_j,(l+1)p_{i-1}-n_j)$ is a $p_j$-$p_{i-1}$-piece of $y$. There exists $k\in \mathbb{Z}$, such that 
              \begin{equation}\label{EL3}
                  lp_{i-1}-n_{j}\leq n+kp_{i-1}<(l+1)p_{i-1}-n_{j}.
              \end{equation}

             By our assumption, $n\not \in {\rm Per}_{p_{i-1}}(y)$. Thus, $n+kp_{i-1}\not \in {\rm Per}_{p_{i-1}}(y)$. By (\ref{EL3}), $n+kp_{i-1}$ is in a $p_j$-$p_{i-1}$-piece of $y$, so we have $n+kp_{i-1}\in {\rm Per}_{p_j}(y)\setminus {\rm Per}_{p_{j-1}}(y)$. The latter implies that $y(n+kp_{i-1})=x_j$  by the construction of an Oxtoby sequence.
             \end{proof}

    \begin{lemma}\label{ToptoTop}
     Let $X$ be a compact metric space, $(x_i)$ and $(x'_i)$ be two sequences of elements in $X$ and $(p_i)$ be a fast growing sequence. Suppose both $L((x_i))$ and $L((x'_i))$ have at least two points and $f$ is a conjugacy map from $(\overline{O}(z((p_i),(x_i)),S)$ to $(\overline{O}(z,(p_i),(x'_i)),S)$, then $f$ maps Toeplitz sequences to Toeplitz sequences. 
\end{lemma}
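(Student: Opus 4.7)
The plan is to exploit the common maximal equicontinuous factor shared by the two systems, together with the characterization of Toeplitz sequences as the points whose fiber under the canonical factor map is a singleton (Lemma \ref{EQF}(ii)).

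Since both Oxtoby systems use the same fast growing sequence $(p_i)$, the discussion preceding Lemma \ref{EQF} gives a common maximal equicontinuous factor $(G,\widehat{1})$ and an automorphism $h\colon G\to G$ satisfying $h\circ \pi_1=\pi_2\circ f$, where $\pi_1,\pi_2$ are the canonical factor maps of the two Oxtoby systems. Applied to $f^{-1}$ this also yields $\pi_1\circ f^{-1}=h^{-1}\circ \pi_2$. This is the backbone of the argument.

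Now let $y\in\overline{O}(z((p_i),(x_i)))$ be a Toeplitz sequence. Suppose toward a contradiction that $f(y)$ is not Toeplitz, i.e.\ $\mathrm{Aper}(f(y))\neq\emptyset$. By Lemma \ref{EQF}(i), this is equivalent to $\mathrm{Aper}(\pi_2(f(y)))\neq\emptyset$. Because $|L((x'_i))|\geq 2$, pick two distinct points $x'_a,x'_b\in L((x'_i))$. Lemma \ref{Will}(2) then produces two elements $y'_a,y'_b\in\pi_2^{-1}(\pi_2(f(y)))$ with $y'_a(k)=x'_a$ and $y'_b(k)=x'_b$ for every $k\in\mathrm{Aper}(\pi_2(f(y)))$; since that aperiodic set is nonempty, $y'_a\neq y'_b$.

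Finally, apply $f^{-1}$ and use $\pi_1\circ f^{-1}=h^{-1}\circ\pi_2$ to compute, for $c\in\{a,b\}$,
\[
\pi_1(f^{-1}(y'_c))=h^{-1}(\pi_2(y'_c))=h^{-1}(\pi_2(f(y)))=\pi_1(y).
\]
Thus $f^{-1}(y'_a)$ and $f^{-1}(y'_b)$ are two distinct elements of $\pi_1^{-1}(\pi_1(y))$, contradicting Lemma \ref{EQF}(ii) which asserts $\pi_1^{-1}(\pi_1(y))=\{y\}$ because $y$ is Toeplitz. Hence $f(y)$ must be Toeplitz. The argument is largely formal once the common factor diagram is in place; the only genuine content is making sure the Oxtoby-specific Lemma \ref{Will}(2) supplies enough distinct fiber elements, which is exactly where the hypothesis $|L((x'_i))|\geq 2$ is used (the symmetric hypothesis on $(x_i)$ guarantees the analogous statement for $f^{-1}$).
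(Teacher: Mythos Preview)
Your proof is correct and follows essentially the same approach as the paper: both arguments use the commuting square $h\circ\pi_1=\pi_2\circ f$ with the common maximal equicontinuous factor, together with the characterization of Toeplitz points as those with singleton $\pi$-fiber (Lemma~\ref{EQF}(ii)) and the fact that non-Toeplitz fibers contain at least two points when $|L((x'_i))|\geq 2$. The paper packages the latter step via Lemma~\ref{CS} and Lemma~\ref{EQF} to state the ``Toeplitz $\iff$ singleton fiber'' equivalence up front, whereas you unpack it explicitly using Lemma~\ref{Will}(2), but the content is the same.
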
 

\begin{proof}
    Write $z$ for $z((p_i),(x_i))$ and $z'$ for $z((p_i),(x'_i))$. Let $\pi$ and $\pi'$ be the canonical factor map of $\overline{O}(z)$ and $\overline{O}(z')$, respectively. Since $L((x_i))$ has at least two points, by Lemma \ref{CS} and Lemma \ref{EQF}, $y\in \overline{O}(z)$ is Toeplitz if and only if $\pi^{-1}(\pi(y))$ is a singleton. For the same reason, $y'\in \overline{O}(z')$ is Toeplitz if and only if $\pi'^{-1}(\pi'(y'))$ is a singleton. By the property of isomorphism and equicontinuous factor, $\pi^{-1}(\pi(y))$ is a singleton implies $\pi'^{-1}(\pi'(f(x)))$ is a singleton. Thus, $f$ maps Toeplitz sequences to Toeplitz sequences.
\end{proof}
            
 \begin{lemma} \label{L4}
             Let $(X,d)$ be a compact metric space, $(x_i)$ and $(x'_i)$ be two sequences of elements in $X$ and $(p_i)$ be a fast growing sequence. Let $f$ be an isomorphism from $(\overline{O}(z((p_i),(x_i)),S)$ to $(\overline{O}(z((p_i),(x_i')),S)$. Suppose $L((x_i'))$ have at least two points. Then there exists $i_0\in \mathbb{N}$, such that for all $i\geq i_0$, $k\in \mathbb{Z}$ and $i<j$, we have $[kp_i,(k+1)p_i)\subset {\rm Per}_{p_j}(z)$ implies $[kp_i,(k+1)p_i) \subset {\rm Per}_{p_j}(f(z))$.
\end{lemma}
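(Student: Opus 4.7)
The plan is to argue by contradiction, exploiting two opposing forces: Lemma \ref{L2} will make the values of $f(z)$ along the arithmetic progression $\{n+lp_j\}_{l\in\mathbb{Z}}$ close to each other, whereas Lemma \ref{L3} will force those same values to realize the decorations $x'_{j_1}$ at arbitrarily large levels $j_1$. Using that $L((x'_i))$ contains two distinct accumulation points $a,b$, I will push the latter values close to both $a$ and $b$, contradicting the closeness estimate.

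For the set-up, I would begin by observing that $f(z)$ is itself Toeplitz: if $L((x_i))$ were a singleton then by Proposition \ref{SIN} the system $(\overline{O}(z((p_i),(x_i))),S)$ would be equicontinuous, hence so would the isomorphic system $(\overline{O}(z((p_i),(x'_i))),S)$, contradicting Proposition \ref{SIN} together with the two-point assumption on $L((x'_i))$; so $L((x_i))$ also has at least two points and Lemma \ref{ToptoTop} applies. Then I would fix distinct $a,b\in L((x'_i))$, set $\epsilon = d(a,b)/5$, and apply Lemma \ref{L2} to $f$ with the period structure $(p_i)$ of $z$, obtaining an $i_0$ such that for all $i\geq i_0$, $k,k'\in\mathbb{Z}$ with $z[kp_i,(k+1)p_i)=z[k'p_i,(k'+1)p_i)$ and $0\leq s<p_i$, one has $d(f(z)(kp_i+s),f(z)(k'p_i+s))<\epsilon$.

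For the main argument, I would fix $i\geq i_0$, $j>i$, $k\in\mathbb{Z}$ with $[kp_i,(k+1)p_i)\subset{\rm Per}_{p_j}(z)$, and suppose toward a contradiction that some $n\in[kp_i,(k+1)p_i)$ satisfies $n\notin{\rm Per}_{p_j}(f(z))$. The hypothesis on $z$ gives $z[kp_i+lp_j,(k+1)p_i+lp_j)=z[kp_i,(k+1)p_i)$ for every $l\in\mathbb{Z}$, so by Lemma \ref{L2},
$$
d\bigl(f(z)(n),f(z)(n+lp_j)\bigr)<\epsilon\quad\text{for every }l\in\mathbb{Z}.
$$
Since $f(z)$ is Toeplitz, I would then let $j^*>j$ be the least integer with $n\in{\rm Per}_{p_{j^*}}(f(z))$, so that $n\in{\rm Per}_{p_{j^*}}(f(z))\setminus{\rm Per}_{p_{j^*-1}}(f(z))$. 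Applying Lemma \ref{L3} to $f(z)$ in the target Oxtoby system: for every $j_1>j^*$ there exists $k_1\in\mathbb{Z}$ with $f(z)(n+k_1p_{j^*-1})=x'_{j_1}$, and since $j^*-1\geq j$ one has $p_j\mid p_{j^*-1}$, so $n+k_1p_{j^*-1}$ is of the form $n+lp_j$. Picking $j_1,j_2>j^*$ with $d(x'_{j_1},a),d(x'_{j_2},b)<\epsilon$ (possible because $a,b\in L((x'_i))$) and the corresponding integers $l_1,l_2$, the triangle inequality yields $d(a,b)<4\epsilon=\tfrac{4}{5}d(a,b)$, the desired contradiction.

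The most delicate point in this plan is guaranteeing that the positions produced by Lemma \ref{L3} actually lie on the arithmetic progression $\{n+lp_j\}$; this is precisely why I need $j^*-1\geq j$, which is forced by $j^*>j$. The remainder is a routine triangle-inequality estimate made possible by the existence of two distinct accumulation points in $L((x'_i))$.
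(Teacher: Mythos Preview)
Your proposal is correct and follows essentially the same approach as the paper: choose $\epsilon$ proportional to $d(a,b)$, apply Lemma~\ref{L2} to obtain $i_0$, and for a hypothetical failure position $n$ use Lemma~\ref{L3} (with the divisibility $p_j\mid p_{j^*-1}$) to place values $x'_{j_1},x'_{j_2}$ near $a,b$ along the arithmetic progression $\{n+lp_j\}$, contradicting the closeness from Lemma~\ref{L2}. Your explicit verification that $f(z)$ is Toeplitz (via Proposition~\ref{SIN} and Lemma~\ref{ToptoTop}) is a point the paper uses tacitly, and your route through $f(z)(n)$ with $\epsilon=d(a,b)/5$ is a minor variant of the paper's direct comparison with $\epsilon=d(a,b)/2$.
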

        \begin{proof}
            
            Write $z$ for $z((p_i),(x_i))$ and $z'$ for $z((p_i),(x'_i))$. Let $a,b$ be two different points in $L((x_i'))$ and $\epsilon=\frac{d(a,b)}{2}>0$. Take $i_0$ given by Lemma \ref{L2} for $\epsilon$. We claim this $i_0$ works. We prove it by contradiction.
            
           Suppose $i_0$ does not work, then there exist $i_0<i<j$ and $k\in \mathbb{Z}$ such that $[kp_i,(k+1)p_i)\subset {\rm Per}_{P_{j}}(z)$ but $[kp_i,(k+1)p_i) \not \subset {\rm Per}_{p_{j}}(f(z))$. The latter implies there is $0\leq m \leq p_i-1$ such that $kp_i+m\not \in {\rm Per}_{p_j}(f(z))$. By the construction of the Oxtoby sequence, there exists $l>j$ such that $f(z)(kp_i+m)=x_l'$. 
           \begin{claim}\label{C1}
               For any $l_1>l$, there exists $k_1$ such that $f(z)(kp_i+m+k_1p_{j})=x'_{l_1}$.
           \end{claim}
           \begin{proof}
               By Lemma \ref{L3}, for any $l_1>l$ there exists $k'$ such that $f(z)(kp_i+m+k'p_{l-1})=x'_{l_1}$. Since $j<l$, we have $p_{j}\mid p_{l-1}$. Thus, we can rewrite $kp_i+m+k'p_{l-1}$ as $kp_i+m+k_1p_{j}$ for some $k_1\in \mathbb{Z}$.
           \end{proof}

            We can find two natural numbers $l_1,l_2$ greater than $l$ such that $d(x_{l_1}',a)<\frac{d(a,b)}{4},\, d(x_{l_2}',b)<\frac{d(a,b)}{4}$. So $d(x_{l_1}',x_{l_2}')>\epsilon$. Since $[kp_i,(k+1)p_i)\subset {\rm Per}_{p_j}(z)$, we have 
            $$
            z[kp_i+kp_{j},(k+1)p_i+kp_{j})
            $$ 
            are the same for all $k$. But in $f(z)$, by Claim \ref{C1}, there are $k_1,k_2\in \mathbb{Z}$ such that
            $$
            f(z)(kp_i+m+k_1p_{j})=x'_{l_1}
            $$
            and
            $$
            f(z)(kp_i+m+k_2p_{j})=x'_{l_2}
            $$
            Thus,
            $$
            d(f(z)(kp_i+m+k_1p_{j}),f(z)(kp_i+m+k_2p_{j}))=d(x'_{l_1},x'_{l_2})>\epsilon
            $$
            which contradicts the $i_0$ we choose.
        \end{proof}

       Now we prove Theorem \ref{Main1}:

        \vspace{0.5em}
        
           \noindent \textit{Proof of Theorem \ref{Main1}.} Write $z$ for $z((p_i),(x_i))$ and $z'$ for $z((p_i),(x'_i))$. Denote by $\pi$ the canonical factor map of $(\overline{O}(z),S)$. Let $\pi(f(z))=(n_i)$. We will prove that $(n_i)$ or $(p_i-n_i)$ is eventually a constant $m$, thus $f(z)=S^mz'$ or $S^{-m}z'$. Take $i_0$ for $z$ and $f(z)$ in Lemma \ref{L4}. We will prove that  $(n_i)_{i\geq i_0+2}$ or $(p_{i}-n_{i})_{i\geq i_0+2}$ is constant.

            \begin{lemma}\label{ML1}
                For all $i\ge i_0$, we have $n_{i+1}<p_{i}$ or $p_{i+1}-n_{i+1}<p_i$.
            \end{lemma}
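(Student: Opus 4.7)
The plan is to proceed by contradiction: suppose $n_{i+1}\in [p_i,\,p_{i+1}-p_i]$. The key tool is Lemma \ref{L4}, which lets me transfer $p_{i+1}$-periodicity of blocks of length $p_i$ from $z$ to $f(z)$, and then, via the common $p_{i+1}$-skeleton of $f(z)$ and $S^{n_{i+1}}z'$, to $z'$. The contradiction will come from the structure of $J(i+1,0)$.

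First I locate two length-$p_i$ blocks in $z$ that lie inside ${\rm Per}_{p_{i+1}}(z)$. By the Oxtoby construction, at step $i+1$ the sets $J(i,0)$ and $J(i,p_{i+1}/p_i-1)$ are filled with $x_{i+1}$, so the positions in $[0,p_{i+1})$ still unfilled at the end of that step form
\[
J(i+1,0)=\bigcup_{m=1}^{p_{i+1}/p_i-2}J(i,m)\subset [p_i,\,p_{i+1}-p_i).
\]
Consequently $[0,p_i)$ and $[p_{i+1}-p_i,p_{i+1})$ are both subsets of ${\rm Per}_{p_{i+1}}(z)$. Applying Lemma \ref{L4} with $j=i+1$ (using $i\geq i_0$) gives the analogous containments for $f(z)$, and then the fact that $f(z)$ and $S^{n_{i+1}}z'$ have the same $p_{i+1}$-skeleton translates this into
\[
[n_{i+1}-p_i,\,n_{i+1}+p_i)\subset {\rm Per}_{p_{i+1}}(z')\pmod{p_{i+1}}.
\]
Under the contradiction hypothesis $n_{i+1}\in[p_i,p_{i+1}-p_i]$, this double interval sits inside $[0,p_{i+1})$ with no wrap-around.

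It then remains to exhibit a full block $[mp_i,(m+1)p_i)$ with $m\in\{1,\dots,p_{i+1}/p_i-2\}$ contained in the double interval $[n_{i+1}-p_i,n_{i+1}+p_i)$. Writing $n_{i+1}=kp_i+r$ with $0\leq r<p_i$: if $r=0$, the two adjacent blocks of indices $k-1$ and $k$ are contained, and the bounds $1\leq k\leq p_{i+1}/p_i-1$ together with $p_{i+1}/p_i\geq 3$ force at least one of $k-1,k$ into $\{1,\dots,p_{i+1}/p_i-2\}$; if $r>0$, then $[kp_i,(k+1)p_i)$ is contained and the bounds force $k\in\{1,\dots,p_{i+1}/p_i-2\}$ directly. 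For such an $m$, $J(i,m)\subset J(i+1,0)$ is nonempty and lies inside $[mp_i,(m+1)p_i)$, so it sits inside the double interval yet is disjoint from ${\rm Per}_{p_{i+1}}(z')$, contradicting the containment above. The main delicate point is exactly this combinatorial check that some middle block of prohibited index is always captured, and it is precisely here that the fast-growing hypothesis $p_{i+1}/p_i\geq 3$ is essential.
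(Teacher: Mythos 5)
Your proof is correct, and it rests on the same pivot as the paper's argument: apply Lemma \ref{L4} with $j=i+1$ to the two boundary blocks of length $p_i$ that the Oxtoby construction places inside ${\rm Per}_{p_{i+1}}(z)$, and contradict the assumption $p_i\le n_{i+1}\le p_{i+1}-p_i$. The implementations differ, though. The paper stays on the $f(z)$ side: it invokes Lemma \ref{I+1} to list the $p_{i+1}$-$p_i$-pieces of $f(z)$ explicitly, shows (Claim \ref{C3}) that $[-p_i,0)$ or $[0,p_i)$ misses all of them, and then uses Lemma \ref{SUB} to conclude that this block cannot lie in ${\rm Per}_{p_{i+1}}(f(z))$. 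You instead push the periodicity forward to $z'$ through the $p_{i+1}$-skeleton identification of $f(z)$ with $S^{n_{i+1}}z'$, and reduce everything to the elementary description $J(i+1,0)=\bigcup_{m=1}^{p_{i+1}/p_i-2}J(i,m)$; your case analysis on $n_{i+1}=kp_i+r$ is the mirror image of Claim \ref{C3} (your two blocks $[0,p_i)$ and $[p_{i+1}-p_i,p_{i+1})$ are the paper's $[0,p_i)$ and $[-p_i,0)$ up to a shift by $p_{i+1}$, which is harmless for $p_{i+1}$-periodicity). Your route bypasses the pieces machinery (Lemmas \ref{UNI}, \ref{SUB}, \ref{I+1}) entirely for this lemma and is more self-contained; the paper's formulation via pieces is what gets reused in the harder induction steps of Theorem \ref{Main1} through Lemma \ref{I+2}, so the authors get more mileage out of setting it up once.

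One step you should make explicit: the punchline needs $J(i,m)$ to be nonempty and disjoint from ${\rm Per}_{p_{i+1}}(z')$. Disjointness is not a purely combinatorial feature of the construction --- if $(x'_l)$ were eventually constant, the positions of $J(i+1,0)$ could well be $p_{i+1}$-periodic --- but it does follow from the standing hypothesis of Theorem \ref{Main1} that $L((x'_i))$ has at least two points, arguing as in Lemma \ref{L3} and Claim \ref{C1}; the paper uses the same fact without comment in the proof of Lemma \ref{I+2}. With that sentence added, your argument is complete.
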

                
            \begin{proof}
                Fix $i\geq i_0$ and suppose towards contradiction that $p_i\leq n_{i+1} \leq p_{i+1}-p_i$. By Lemma \ref{I+1}, the $p_{i+1}$-$p_i$-pieces of $f(z)$ are the intervals
                $$
                [-p_i-n_{i+1}+kp_{i+1},p_i-n_{i+1}+kp_{i+1})
                $$ 
                for $k\in \mathbb{Z}$. 
                \begin{claim}\label{C3}
                    The interval $[-p_i,0)$ or the interval $[0,p_i)$ is disjoint with
                $$
                \bigcup_{k\in \mathbb{Z}}[-p_i-n_{i+1}+kp_{i+1},p_i-n_{i+1}+kp_{i+1}).
                $$
                \end{claim}
               \begin{proof}
                   Denote by $I_k$ the interval $[-p_i-n_{i+1}+kp_{i+1},p_i-n_{i+1}+kp_{i+1})$. Note that the distance between the right endpoint of $I_k$ and the left endpoint of $I_{k+1}$ is $p_{i+1}\geq 3p_i$. Since the length of interval $[-p_i,p_i)=[-p_i,0)\cup[0,p_i)$ is equal to $2p_i$, it is enough to show for every $k\in \mathbb{Z}$, the interval $I_k$ intersects at most one of the intervals $[-p_i,0)$ or $[0,p_i)$. 
                   
                   When $k=0$, the interval $[-p_i-n_{i+1}+kp_{i+1},p_i-n_{i+1}+kp_{i+1})$ is equal to $[-p_i-n_{i+1},p_i-n_{i+1})$, and it is disjoint with $[0,p_i)$ since  $p_i<n_{i+1}$.  
                   
                   When $k=1$, the interval $[-p_i-n_{i+1}+kp_{i+1},p_i-n_{i+1}+kp_{i+1})$ is equal to $[-p_i-n_{i+1}+p_{i+1},p_i-n_{i+1}+p_{i+1})$, and it is disjoint with $[-p_i,0)$ since $n_{i+1}\leq p_{i+1}-p_i$. 
                   
                   Now suppose $k\geq 2$. Since $(p_i)$ is a fast growing sequence, we have $p_{i+1}\geq 3p_i$, and our assumption implies that $p_{i+1}-n_{i+1}\geq p_i$, which together implies the following
                   $$
                   -p_i-n_{i+1}+kp_{i+1}\geq  -p_i-n_{i+1}+2p_{i+1}=(p_{i+1}-p_i)+(p_{i+1}-n_{i+1})\geq 2p_i.
                   $$ In particular, the left endpoint of  the interval $[-p_i-n_{i+1}+kp_{i+1},p_i-n_{i+1}+kp_{i+1})$ is greater than $p_i$, so the interval is disjoint with both $[-p_i,0)$ and $[0,p_i)$. 
                   
                   Finally, suppose $k\leq -1$. Since $p_{i+1}\geq p_i$, and by definition $n_{i+1}\geq 0$ which together implies
                   $$
                   p_i-n_{i+1}+kp_{i+1}\leq p_i-p_{i+1}\leq -2p_i.
                   $$ 
                   In particular, the right endpoint of  the interval $[-p_i-n_{i+1}+kp_{i+1},p_i-n_{i+1}+kp_{i+1})$ is less than $-p_i$, so the interval is disjoint with both $[-p_i,0)$ and $[0,p_i)$.
               \end{proof}
                 
                 By Claim \ref{C3} and  Lemma \ref{SUB}, one of the intervals $[-p_i,0)$ and $[0,p_i)$ is not a subset of ${\rm Per}_{p_{i+1}}(f(z))$. 
                 But $[-p_i,p_{i})\subset {\rm Per}_{p_{i+1}}(z)$ which contradicts the choice of $i\geq i_0$.
            \end{proof}
          
           
           By Lemma \ref{ML1}, there are two cases: 

            \textbf{Case 1.} $n_{i_0+1}<p_{i_0}$.  In this case, we show that $(n_{i+1})_{i\geq i_0}$ is constant. We prove by induction on $i\geq i_0$ that $n_{i+1}=n_{i_0+1}$. The base case with $i=i_0$ is clear. Now we assume that $n_{i+1}=n_{i_0+1}$ and we will show that $n_{i+2}=n_{i+1}$. 
            By Lemma \ref{ML1}, we have that $n_{i+2}<p_{i+1}$ or $p_{i+2}-n_{i+2}<p_{i+1}$. Since $n_{i+2}\equiv n_{i+1}$ mod $p_{i+1}$, the former case implies that $n_{i+1}=n_{i+2}$, in which case we are done. We will reach a contradiction from the assumption that the latter case holds, in which case we have 
            
            \begin{equation}\label{MW}
                p_{i+2}-n_{i+2}=p_{i+1}-n_{i+1}. 
            \end{equation}

           Also, the following inequality follows from our assumption and the fact that $(p_i)$ is increasing.
           \begin{equation}\label{MW2}
               n_{i+1}<p_i.
           \end{equation}
            Now we look at the following interval
           
            $$
            D=[p_i-n_{i+1}-p_{i+1},-p_i-n_{i+1}).
            $$

            By Lemma \ref{I+2}, all $p_{i+2}$-$p_i$-pieces of $f(z)$ are $I_k=[kp_{i+2}-n_{i+2}+p_{i},kp_{i+2}-p_i-n_{i+2}+p_{i+1})$ and $J_k=[kp_{i+2}-n_{i+2}+p_i-p_{i+1},kp_{i+2}-n_{i+2}-p_i)$.

            \begin{claim}\label{MAINc}
            
                \begin{enumerate}
                    \item If $k\geq 1$, then $D$ is disjoint with $I_k$ and $J_k$.
                    \item If $k\leq 0$, then $D$ is disjoint with $I_k$ and $J_k$.
                   \end{enumerate}
            \end{claim}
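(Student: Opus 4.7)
The plan is to reduce Claim \ref{MAINc} to straightforward interval arithmetic, made tractable by first substituting the identity (\ref{MW}) in the form $n_{i+2} = p_{i+2} - p_{i+1} + n_{i+1}$ into the given expressions for $I_k$ and $J_k$. This converts them to
\begin{align*}
I_k &= \bigl[(k-1)p_{i+2} + p_{i+1} + p_i - n_{i+1},\ (k-1)p_{i+2} + 2p_{i+1} - p_i - n_{i+1}\bigr),\\
J_k &= \bigl[(k-1)p_{i+2} + p_i - n_{i+1},\ (k-1)p_{i+2} + p_{i+1} - p_i - n_{i+1}\bigr),
\end{align*}
displaying both as translates by $(k-1)p_{i+2}$ of the $k=1$ intervals. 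Together with (\ref{MW2}), which places $D$ inside $(-p_{i+1},0)$, the problem reduces to a short list of endpoint inequalities between $D$ and two ``reference'' intervals, one for each family.

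For part (1), with $k\ge 1$, the tight case is $k=1$, where the left endpoints of $J_1$ and $I_1$ become $p_i - n_{i+1}$ and $p_{i+1} + p_i - n_{i+1}$ respectively; both trivially exceed the right endpoint $-p_i - n_{i+1}$ of $D$, so $J_1$ and $I_1$ sit strictly to the right of $D$. For $k \ge 2$ the intervals move right by further positive multiples of $p_{i+2}$ and remain disjoint from $D$.

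For part (2), with $k\le 0$, the tight case is $k=0$, and the inequalities to check reduce, after cancellation, to $2p_{i+1} - p_{i+2} \le 2p_i$ for $J_0$ and $3p_{i+1} - p_{i+2} \le 2p_i$ for $I_0$. Both follow at once from the fast-growth hypothesis $p_{i+2} \ge 3p_{i+1}$, so $J_0$ and $I_0$ lie at or to the left of $D$. For $k \le -1$ the intervals shift further left by additional multiples of $p_{i+2}$, so disjointness is preserved.

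I do not foresee any real obstacle: the substance of the claim is packed into the substitution via (\ref{MW}), which recognises $I_k$ and $J_k$ as $p_{i+2}$-periodic translates of fixed intervals; after that, fast growth settles every endpoint comparison in one line. The care required is purely bookkeeping: tracking the shift from $n_{i+2}$ to $n_{i+1}$, and handling the half-open endpoint conventions for $I_k$ and $J_k$ in parallel.
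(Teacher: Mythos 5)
Your proof is correct and follows essentially the same route as the paper: both arguments substitute (\ref{MW}) into the endpoints of $I_k$ and $J_k$, verify the boundary cases ($J_1$ for $k\ge 1$, $I_0$ for $k\le 0$) by direct comparison with the endpoints of $D$ using $p_{i+2}\ge 3p_{i+1}$, and extend to all $k$ by the monotone ordering of the pieces; your rewriting of $I_k,J_k$ as translates by $(k-1)p_{i+2}$ is just a tidier packaging of the same computation. (Incidentally, you correctly identify $-p_i-n_{i+1}$ as the \emph{right} endpoint of $D$, which the paper's proof of part (1) misnames as the left endpoint.)
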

            \begin{proof}
                (1) Since we have $p_{i+2}\geq 3p_{i+1}$, by (\ref{MW}), the left endpoint of $J_1$ is
            $$
            p_{i+2}-n_{i+2}+p_i-p_{i+1}= p_{i+1}-n_{i+1}+p_i-p_{i+1}=p_i-n_{i+1}>-p_i-n_{i+1}
            $$
            which is the left endpoint of $D$.
              The statement follow from the fact that $J_k<I_k<J_{k+1}$ for all $k\in \mathbb{Z}$.
             
             (2) Since we have $p_{i+2}\geq 3p_{i+1}$ and $n_{i+1}<p_{i+1}$, by (\ref{MW}), the right endpoint of $I_0$ is
            $$
           -p_i-n_{i+2}+p_{i+1}=-p_i-p_{i+2}+2p_{i+1}-n_{i+1}\leq-p_i-3p_{i+1}+2p_{i+1}-n_{i+1}=-p_i-n_{i+1}-p_{i+1}<p_i-n_{i+1}-p_{i+1}
            $$
            which is the left endpoint of $D$. Thus, the statement follows from the fact that $J_k<I_k<J_{k+1}$ for all $k\in \mathbb{Z}$.
             \end{proof}
            Claim \ref{MAINc} and Lemma \ref{I+2} imply that $D$ is disjoint with all $p_{i+2}$-$p_i$-pieces of $f(z)$.

            \begin{claim}\label{Mains2}
                $D$ is disjoint with any $p_{i+1}$-$p_i$-piece of $f(z)$
            \end{claim}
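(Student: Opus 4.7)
The plan is to proceed by a direct computation analogous to the proof of Claim \ref{MAINc}, but one level down (at the scale $p_{i+1}$ instead of $p_{i+2}$), using Lemma \ref{I+1} in place of Lemma \ref{I+2}. First I would invoke Lemma \ref{I+1} applied to $f(z)$ (whose image under $\pi$ is $(n_i)$) to enumerate every $p_{i+1}$-$p_i$-piece of $f(z)$ as one of the intervals
\[
A_k = [kp_{i+1}-n_{i+1},\,kp_{i+1}-n_{i+1}+p_i) \quad\text{or}\quad B_k = [kp_{i+1}-n_{i+1}-p_i,\,kp_{i+1}-n_{i+1})
\]
for $k\in\mathbb{Z}$. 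Note $B_k$ and $A_k$ abut at $kp_{i+1}-n_{i+1}$, and the gap between the right endpoint of $A_k$ and the left endpoint of $B_{k+1}$ has length $p_{i+1}-2p_i$.

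Next I would record the length of $D$: a direct subtraction gives $|D| = (-p_i-n_{i+1})-(p_i-n_{i+1}-p_{i+1}) = p_{i+1}-2p_i$, which (by the fast-growing hypothesis $p_{i+1}\ge 3p_i$) is positive. I would then observe that $D$ coincides exactly with the gap between $A_{-1}$ and $B_0$: the right endpoint of $A_{-1}$ is $-p_{i+1}-n_{i+1}+p_i = p_i-n_{i+1}-p_{i+1}$, which equals the left endpoint of $D$, and the left endpoint of $B_0$ is $-n_{i+1}-p_i$, which equals the right endpoint of $D$. Hence $D$ is disjoint from both $A_{-1}$ and $B_0$.

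To finish, I would verify that every other piece $A_k, B_k$ lies strictly to one side of $D$. For $k\le -2$, both $A_k$ and $B_k$ end at or before $-p_{i+1}-n_{i+1} < p_i-n_{i+1}-p_{i+1}$, i.e., strictly to the left of $D$. For $k\ge 1$, both $A_k$ and $B_k$ begin at or after $p_{i+1}-n_{i+1}-p_i > -p_i-n_{i+1}$, i.e., strictly to the right of $D$. Finally $A_0$ begins at $-n_{i+1} > -p_i-n_{i+1}$ and $B_{-1}$ ends at $-p_{i+1}-n_{i+1} < p_i-n_{i+1}-p_{i+1}$, so they are also disjoint from $D$. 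Combining these cases establishes the claim.

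The only potential obstacle is keeping the endpoint computations straight; conceptually the statement is almost forced once one notices that $D$ was defined precisely as the gap between two consecutive $p_{i+1}$-$p_i$-pieces, so no new ideas beyond Lemma \ref{I+1} and arithmetic with the fast-growing inequality are needed.
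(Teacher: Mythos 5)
Your proof is correct and is essentially the paper's argument: both identify $D$ as exactly the gap between two consecutive $p_{i+1}$-$p_i$-pieces supplied by Lemma \ref{I+1} and observe that all remaining pieces lie strictly to one side of $D$. If anything, your endpoint bookkeeping is more explicit than the paper's one-line version (whose identification of the endpoints of $D$ with those of ``$Q_{-1}$'' and ``$P_0$'' appears to swap the $P/Q$ labels relative to the displayed formulas), but the substance is identical.
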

            \begin{proof}
                By Lemma \ref{I+1}, all $p_{i+1}$-$p_i$-pieces are of the form 
            $$
            P_k=[kp_{i+1}-n_{i+1},kp_{i+1}-n_{i+1}+p_i)\,\,\, \mbox{or}\,\,\, Q_k= [kp_{i+1}-n_{i+1}-p_i,kp_{i+1}-n_{i+1}).
            $$
            
            Note that the left endpoint of $D$ is the right endpoint of $Q_{-1}$, and the right endpoint of $D$ is
            $$
            -p_i-n_{i+1}<-n_{i+1}
            $$
            which is the left endpoint of $P_0$. The statement follows from the fact that
            $P_k<Q_k<P_{k+1}$ for all $k$.
            \end{proof}

           Since $n_{i+1} \equiv n_i$ mod $p_i$ and $p_i\mid p_{i+1}$, we can write 
           $$
           D=[ap_i-n_i,bp_i-n_i)
           $$
           where $a,b \in \mathbb{Z}$. Thus, by Lemma \ref{UNI}, there exists $j>i$ such that $D$ contain a $p_j$-$p_i$-piece of $f(z)$. Thus, by Claim \ref{MAINc} and Claim \ref{Mains2}, there exists $j>i+2$ such that $D$ contains a $p_{j}$-$p_i$-piece of $f(z)$ with $j>i+2$. Fix such a $j>i+2$.
            
            By (\ref{MW2}), the left endpoint of $D$ is
            $$
            p_i-n_{i+1}-p_{i+1}>-p_{i+1}.
            $$
            Thus, we have
           $$
           D\subset[-p_{i+1},0).
           $$
           
           Note that $[-p_{i+1},0)\subset {\rm Per}_{p_{i+2}}(z)$ by definition of the Oxtoby sequence. But since $D$ contains a $p_j$-$p_i$-piece of $f(z)$, we have $[-p_{i+1},0)\not \subset {\rm Per}_{p_{i+2}}(f(z))$. This contradicts the $i_0$ we choose.
            
           \textbf{Case 2} $p_{i_0+1}-n_{i_0+1}<p_{i_0}$. In this case, we show that $(p_{i+1}-n_{i+1})_{i\geq i_0}$ is constant. We prove by induction on $i\geq i_0$ that $p_{i+1}-n_{i+1}=p_{i_0+1}-n_{i_0+1}$. The base case with $i=i_0$ is clear. Now we assume that $p_{i+1}-n_{i+1}=p_{i_0+1}-n_{i_0+1}$ and we will show that $p_{i+2}-n_{i+2}=p_{i+1}-n_{i+1}$. By Lemma \ref{ML1}, we have $n_{i+2}<p_{i+1}$ or $p_{i+2}-n_{i+2}<p_{i+1}$. Since $n_{i+2}\equiv n_{i+1}$ mod $p_{i+1}$, the latter case implies that $p_{i+1}-n_{i+1}=p_{i+2}-n_{i+2}$, in which case we are done. We will reach a contradiction from the assumption that the former case holds, in which case we have 
           \begin{equation}\label{MW3}
               n_{i+2}=n_{i+1}.
           \end{equation} 
              Also, the following inequality follows from our assumption and the fact $p_i$ is increasing
              \begin{equation}\label{MW4}
                  p_{i+1}-n_{i+1}<p_i.
              \end{equation}
              Now we look at the following interval. 
            $$
            F=[p_i-n_{i+1}+p_{i+1},-p_i-n_{i+1}+2p_{i+1}).
            $$
           \begin{claim}\label{MAINc1}
                 \begin{enumerate}
                    \item If $k\geq 1$, then $F$ is disjoint with $I_k$ and $J_k$.
                    \item If $k\leq 0$, then $F$ is disjoint with $I_k$ and $J_k$.
                   \end{enumerate}
            \end{claim}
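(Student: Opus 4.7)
The plan is to mirror the proof of Claim \ref{MAINc}, exploiting a left-right symmetry between the two cases. Geometrically, Case 2 is obtained from Case 1 by reflecting through the midpoint of $[0,p_{i+1})$: the interval $F$ is the reflected counterpart of $D$, sitting inside $[p_{i+1},2p_{i+1})$ rather than $[-p_{i+1},0)$. So the arithmetic should run in parallel, with the roles of ``left'' and ``right'' swapped, and using $n_{i+2}=n_{i+1}$ from (\ref{MW3}) in place of the shift-invariance that was available in Case 1.

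First I would simplify all the $I_k,J_k$ formulas by substituting the equality $n_{i+2}=n_{i+1}$, so that everything is expressed in $n_{i+1},p_i,p_{i+1},p_{i+2}$. Next I would record the linear ordering $\cdots < J_{-1}<I_{-1}<J_0<I_0<J_1<I_1<\cdots$, noting that consecutive $J_k$ and $I_k$ (on the same $k$) are separated by a gap of length $2p_i$. Thanks to this ordering it suffices to handle the two ``boundary'' intervals closest to $F$ on either side: namely $I_0$ for the left side and $J_1$ for the right side.

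For part (2), with $k\le 0$, the rightmost candidate is $I_0$, whose right endpoint is $p_{i+1}-p_i-n_{i+1}$. The left endpoint of $F$ is $p_{i+1}+p_i-n_{i+1}$, exceeding it by $2p_i>0$, so $I_0$ lies strictly to the left of $F$; by the ordering above every $I_k,J_k$ with $k\le 0$ does too. For part (1), with $k\ge 1$, the leftmost candidate is $J_1$, with left endpoint $p_{i+2}-n_{i+1}+p_i-p_{i+1}$, while the right endpoint of $F$ is $2p_{i+1}-p_i-n_{i+1}$. Their difference equals $p_{i+2}-3p_{i+1}+2p_i$, which is $\ge 2p_i>0$ by the fast-growing property $p_{i+2}\ge 3p_{i+1}$; hence $J_1$ lies strictly to the right of $F$ and the ordering again propagates disjointness to all $k\ge 1$.

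I do not expect a real obstacle; the argument is entirely parallel to Claim \ref{MAINc}, with endpoints shifted by $+p_{i+1}$. The only bookkeeping concern is that the hypothesis $p_{i+1}-n_{i+1}<p_i$ from (\ref{MW4}) appears to play no role inside the disjointness claim itself: just as (\ref{MW2}) was used after Claim \ref{MAINc} rather than within it, I expect (\ref{MW4}) to enter only in the subsequent step that locates $F$ inside $[p_{i+1},2p_{i+1})\subset {\rm Per}_{p_{i+2}}(z)$ and then derives the desired contradiction with the choice of $i_0$ from Lemma \ref{L4}.
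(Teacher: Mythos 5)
Your proposal is correct and follows essentially the same route as the paper: the paper likewise compares the left endpoint of $J_1$ with the right endpoint of $F$ (obtaining $p_{i+2}-n_{i+1}+p_i-p_{i+1}>2p_{i+1}-n_{i+1}-p_i$ from $p_{i+2}\ge 3p_{i+1}$) and the right endpoint of $I_0$ with the left endpoint of $F$, then propagates disjointness via the ordering $J_k<I_k<J_{k+1}$. Your reading is also right that (\ref{MW4}) is not needed inside the claim but only afterwards to place $F$ inside $[0,p_{i+1})$; incidentally, your comparison clarifies a small slip in the paper, which labels $-p_i-n_{i+1}+2p_{i+1}$ as the ``left'' endpoint of $F$ when it is in fact the right endpoint.
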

            \begin{proof}
                (1) Since we have $p_{i+2}\geq 3p_{i+1}$, by (\ref{MW3}), the left endpoint of $J_1$ is
            $$
            p_{i+2}-n_{i+2}+p_i-p_{i+1}= p_{i+2}-n_{i+1}+p_i-p_{i+1}\geq 3p_{i+1}-n_{i+1}+p_i-p_{i+1}> -p_i-n_{i+1}+2p_{i+1}
            $$
            which is the left endpoint of $F$.
              The statement follow from the fact that $J_k<I_k<J_{k+1}$ for all $k\in \mathbb{Z}$.
             
             (2) Since we have $p_{i+2}\geq 3p_{i+1}$ and $n_{i+1}<p_{i+1}$, by (\ref{MW3}), the right endpoint of $I_0$ is
            $$
           -p_i-n_{i+2}+p_{i+1}=-p_i-n_{i+1}+p_{i+1}<p_i-n_{i+1}+p_{i+1}
            $$
            which is the left endpoint of $F$. Thus, the statement follows from the fact that $J_k<I_k<J_{k+1}$ for all $k\in \mathbb{Z}$.
             \end{proof}
            Claim \ref{MAINc} and Lemma \ref{I+2} imply that $F$ is disjoint with all $p_{i+2}$-$p_i$-pieces of $f(z)$.

            \begin{claim}\label{Mains3}
                $F$ is disjoint with any $p_{i+1}$-$p_i$-piece of $f(z)$
            \end{claim}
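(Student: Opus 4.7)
The plan is to mirror the proof of Claim \ref{Mains2} by directly locating the interval $F$ in the sequence of $p_{i+1}$-$p_i$-pieces and showing it sits in a gap between two consecutive pieces. By Lemma \ref{I+1}, every $p_{i+1}$-$p_i$-piece of $f(z)$ has one of the two forms
\[
P_k=[kp_{i+1}-n_{i+1},\,kp_{i+1}-n_{i+1}+p_i),\qquad Q_k=[kp_{i+1}-n_{i+1}-p_i,\,kp_{i+1}-n_{i+1}),
\]
and these are arranged on $\mathbb{Z}$ in the order $\ldots<Q_k<P_k<Q_{k+1}<P_{k+1}<\ldots$, with the gap between $P_k$ and $Q_{k+1}$ having length $p_{i+1}-2p_i\geq p_i>0$ because $(p_i)$ is fast growing.

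Next I would compute the endpoints of $F=[p_i-n_{i+1}+p_{i+1},\,-p_i-n_{i+1}+2p_{i+1})$ and compare them with the appropriate $P_k,Q_k$. The right endpoint of $P_1$ is $p_{i+1}-n_{i+1}+p_i$, which coincides with the left endpoint of $F$; and the left endpoint of $Q_2$ is $2p_{i+1}-n_{i+1}-p_i$, which coincides with the right endpoint of $F$. Hence $F$ lies exactly in the gap $[P_1\text{'s right end},\,Q_2\text{'s left end})$ between two consecutive pieces. Since the endpoints of $F$ match (excluded) endpoints of $P_1$ and $Q_2$, the half-open interval $F$ is disjoint from both $P_1$ and $Q_2$; and all other pieces lie strictly to the left of $P_1$ or strictly to the right of $Q_2$ by the linear ordering above, so they too are disjoint from $F$.

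There is essentially no obstacle here; this is a direct arithmetic verification parallel to Claim \ref{Mains2}, relying only on the fact $p_{i+1}\ge 3p_i$ to ensure that consecutive pieces $P_k,Q_{k+1}$ do in fact leave a nontrivial gap around $F$. The content of the claim is simply that the specific interval $F$ chosen in Case~2 has been placed precisely at one such gap, which is why it can later be used (together with Claim \ref{MAINc1}) to contradict the choice of $i_0$ via Lemma \ref{L4}.
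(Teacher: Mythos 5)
Your proof is correct and follows the same route as the paper: identify the $p_{i+1}$-$p_i$-pieces as the intervals $P_k,Q_k$ via Lemma \ref{I+1}, observe that the left endpoint of $F$ is the right endpoint of $P_1$ and the right endpoint of $F$ is the left endpoint of $Q_2$, and conclude from the linear ordering of the pieces. Your explicit ordering $Q_k<P_k<Q_{k+1}$ is in fact the correct one (the paper writes $P_k<Q_k<P_{k+1}$, which is a typo), and your endpoint bookkeeping for the half-open intervals is accurate.
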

            \begin{proof}
                By Lemma \ref{I+1}, all $p_{i+1}$-$p_i$-pieces are of the form 
            $$
            P_k=[kp_{i+1}-n_{i+1},kp_{i+1}-n_{i+1}+p_i)\,\,\, \mbox{or}\,\,\, Q_k= [kp_{i+1}-n_{i+1}-p_i,kp_{i+1}-n_{i+1}).
            $$
            
            Note that the left endpoint of $F$ is the right endpoint of $P_{1}$, and the right endpoint of $F$ is the left endpoint of $Q_2$. The statement follows from the fact that
            $P_k<Q_k<P_{k+1}$ for all $k$.
            \end{proof}
            Since $n_{i+1} \equiv n_i$ mod $p_i$ and $p_i\mid p_{i+1}$, we can write 
           $$
           F=[ap_i-n_i,bp_i-n_i)
           $$
           where $a,b \in \mathbb{Z}$. Thus, by Lemma \ref{UNI}, there exists $j>i$ such that $F$ contain a $p_j$-$p_i$-piece of $f(z)$. Thus, by Claim \ref{MAINc} and Claim \ref{Mains2}, there exists $j>i+2$ such that $F$ contains a $p_{j}$-$p_i$-piece of $f(z)$ with $j>i+2$. Fix such a $j>i+2$.

            By (\ref{MW4}), the right endpoint of $F$ is
            $$
            -p_i-n_{i+1}+2p_{i+1}=(p_{i+1}-n_{i+1}-p_i)+p_{i+1}<p_{i+1}.
            $$
            Thus, we have
           $$
           F\subset[0,p_{i+1}).
           $$
           
           Note that $[0,p_{i+1})\subset {\rm Per}_{p_{i+2}}(z)$ by definition of the Oxtoby sequence. But since $F$ contains a $p_j$-$p_i$-piece of $f(z)$, we have $[0,p_{i+1})\not \subset {\rm Per}_{p_{i+2}}(f(z))$. This contradicts the $i_0$ we choose.
            \par\hfill $\square$

         \vspace{0.5em}

       \section{Reverse elements of Oxtoby systems}\label{proof 2}
        In this section, we state and prove one more result which can be of independent interest and we give another proof of Theorem \ref{Main1}.
        
         \begin{definition}
       For $x\in X^\mathbb{Z}$, $x^{-1}$ is the bi-infinite sequence such that $x^{-1}(n)=x(-n)$.
\end{definition} 

Note that if $x$ is a point in an Oxtoby system $\overline{O}(z((p_i),(x_i)))$, then so is $x^{-1}$.

        \begin{theorem} \label{MT1'}
     Let $(x_i)$ be a sequence of points in a compact metric space $X$, $(p_i)$ be a fast growing sequence, and $z=z((p_i),(x_i))$ be the Oxtoby sequence. Suppose $L((x_i))$ have at least two points and $x \in \overline{O}(z)$ is a Toeplitz sequence. Then the following statements are equivalent
     \begin{enumerate}
      \item  $x=S^mz$ for some $m\in \mathbb{Z}$.
         \item there is a conjugacy map $f$ from $\overline{O}(z)$ to itself such that $f(x)=x^{-1}$.
        
     \end{enumerate}
\end{theorem}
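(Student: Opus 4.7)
My plan is to first establish the palindromic symmetry $z(n)=z(-n-1)$ of the Oxtoby sequence, and then deduce both implications. I would prove this symmetry by induction on the Oxtoby construction: at step $i+1$, the involution $n\mapsto -n-1$ sends the block $[kp_i,(k+1)p_i)$ to $[(-k-1)p_i,-kp_i)$, so the block index $k$ is swapped with $-k-1$; this preserves the rule ``$k\equiv 0$ or $-1 \pmod{p_{i+1}/p_i}$'' that selects which positions receive value $x_{i+1}$. With the inductive hypothesis that the already-defined set is flip-invariant, the new assignment is also flip-invariant, so $z(n)=z(-n-1)$.

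For $(1)\Rightarrow(2)$, assuming $x=S^kz$, a direct computation using the symmetry gives $x^{-1}(n)=x(-n)=z(k-n)=z(-(k-n)-1)=z(n-k-1)$, i.e.\ $x^{-1}=S^{-k-1}z=S^{-2k-1}x$; thus $f:=S^{-2k-1}$ is a self-conjugacy of $\overline{O}(z)$ with $f(x)=x^{-1}$.

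For $(2)\Rightarrow(1)$, I would apply Theorem~\ref{Main1} to the given $f$, obtaining $f(z)=S^{m}z$ for some $m\in\mathbb Z$; since $f$ commutes with $S$, this forces $f=S^{m}$ on the dense orbit of $z$ and hence on all of $\overline{O}(z)$. Thus $S^{m}x=x^{-1}$, i.e.\ $x(k)=x(m-k)$. Projecting to the maximal equicontinuous factor $\pi\colon\overline{O}(z)\to G=\varprojlim\mathbb Z_{p_i}$, the palindromic symmetry of $z$ gives $\pi(y^{-1})=-\pi(y)-\Bar 1$ for every $y\in\overline{O}(z)$ (verify on the orbit of $z$, then extend by continuity). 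Combining with $\pi(f(x))=\pi(x)+\Bar m$ yields $2\pi(x)=-\overline{m+1}$ in $G$. When $m$ is odd, the distinguished solution $g_0=-\overline{(m+1)/2}\in\mathbb Z\cdot\Bar 1$ corresponds precisely to $x=S^{-(m+1)/2}z$, which is $(1)$.

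The main obstacle will be excluding all other candidates for $\pi(x)$. When the $2$-torsion $T_2\subset G$ is nontrivial (some $p_i$ is even), other solutions $g_0+t$ with $t\in T_2\setminus\{0\}$ must be ruled out, and when all $p_i$ are odd but $m$ is even, the unique solution $g_0\notin\mathbb Z\cdot\Bar 1$ itself must be eliminated. In each case I would produce an integer in the aperiodic set of the candidate, contradicting that $x$ is Toeplitz. The central combinatorial input is that the sets $J_i$ of undefined positions after step $i$, being palindromically symmetric about $(p_i-1)/2$, contain $(p_i-1)/2$ for $p_i$ odd and $p_i/2$ for $p_i$ even. This places the $\sigma$-fixed point $g_*$ (solution of $2g=-\Bar 1$) and, when available, the nontrivial $2$-torsion element $t^*=(p_i/2)_i$ inside $\bigcap_i\pi_i^{-1}(J_i)$; integer shifts then transport these aperiodic positions onto the bad candidates for $\pi(x)$, forcing $\pi(x)\in\mathbb Z\cdot\Bar 1$ and hence $(1)$.
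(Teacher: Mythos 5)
Your $(1)\Rightarrow(2)$ is exactly the paper's. For $(2)\Rightarrow(1)$ you take a genuinely different route: the paper's argument is deliberately self-contained, analysing maximal $p_m$-periodic blocks (Lemmas \ref{6_1}--\ref{L'2}) and the asymmetry sequence $n_i-m_i$, precisely so that Theorem \ref{Main1} can afterwards be \emph{re-derived} from Theorem \ref{MT1'}; you instead invoke Theorem \ref{Main1} to get $f=S^m$ and reduce everything to the equation $2\pi(x)=-\overline{m+1}$ in the odometer. That is logically admissible (Theorem \ref{Main1} is proved independently in Section \ref{proof 1}), though your proof could not then be combined with the paper's Section \ref{proof 2} derivation of Theorem \ref{Main1} from Theorem \ref{MT1'} without circularity. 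The skeleton of your reduction is sound: $f(z)=S^mz$ forces $f=S^m$ by density and continuity, $\pi(y^{-1})=-\pi(y)-\bar{1}$ extends by continuity from the orbit of $z$, the translation rule ${\rm Aper}(g+\bar{c})={\rm Aper}(g)-c$ is correct, and a Toeplitz $x$ indeed forces ${\rm Aper}(\pi(x))=\emptyset$, so the whole burden falls on showing that every non-integer solution of $2g=-\overline{m+1}$ has nonempty aperiodic part.

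That is where the gap is. Palindromic symmetry of $J(i,0)$ about $(p_i-1)/2$ does \emph{not} imply that the centre lies in $J(i,0)$, and for even $p_i$ your claim $p_i/2\in J(i,0)$ is false: take $p_1=3$, $p_2=12$; then $J(2,0)=J(1,1)\cup J(1,2)=\{4,7\}$, which omits $6$. (For all $p_i$ odd the claim $(p_i-1)/2\in J(i,0)$ \emph{is} true, but it needs its own induction, via $(p_{i+1}-1)/2=\tfrac{r-1}{2}p_i+\tfrac{p_i-1}{2}$ with $r=p_{i+1}/p_i$ odd and $1\le\tfrac{r-1}{2}\le r-2$, not the symmetry alone; so your odd case survives once this is written out.) As stated, then, you have not produced any element of ${\rm Aper}(t^*)$ when the $2$-torsion is nontrivial, and this cannot be waved away: there are in general many non-integer $g\in G$ with ${\rm Aper}(g)=\emptyset$, so the conclusion really does depend on the specific arithmetic of the solutions. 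The step is repairable: coherence of a nontrivial $2$-torsion element forces $t_i=p_i/2$ with $r=p_{i+1}/p_i$ odd for all large $i$, and then taking the block $k=\tfrac{r-1}{2}$ in $J(i+1,0)=\bigcup_{k=1}^{r-2}(J(i,0)+kp_i)$ gives $J(i+1,0)-p_{i+1}/2\supseteq J(i,0)-p_i/2$, so these translates increase and any element of one of them witnesses ${\rm Aper}(t^*)\ne\emptyset$. But some such argument must replace the false membership claim before the even case, and hence the theorem, is actually proved.
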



Now we show a proof from Theorem 5.1 to Theorem 6.1.
\begin{proof}[Proof of Theorem \ref{Main1} from Theorem \ref{MT1'}] Write $z$ for $z((p_i),(x_i))$ and $z'$ for $z((p_i),(x'_i))$. By Lemma \ref{ToptoTop}, $f(z)$ is a Toeplitz sequence in $\overline{O}(z')$. For any integer sequence $(n_k)$, the sequence $S^{n_k}(f(z))$ converges if and only if the sequence $S^{n_k}(z)$ converges. By Theorem \ref{MT1'} with $x=z$, the sequence $S^{n_k}(z)$ converges if and only if 
the sequence $S^{n_k}(z^{-1})$ converges. By definition the sequence $S^{-n_k}(z)$ converges if and only if $S^{-n_k}(f(z))$ converges if and only if $S^{n_k}([f(z)]^{-1})$ converges. Hence $S^{n_k}(f(z))$ converges if and only if $S^{n_k}([f(z)]^{-1})$ converges. By Lemma \ref{min}, there is a conjugacy map $g$ from $\overline{O}(z')$ to itself such that $g(f(z))=[f(z)]^{-1}$. Then by Theorem \ref{MT1'} again, $f(z)$ is in the orbit of $z'$.\par\hfill 
\end{proof}

\begin{definition}
    Let $x \in X^{\mathbb{Z}}$ be a non-periodic sequence. Let $n_0\le n_1$ be integers, then the interval $[n_0,n_1]$ is called a \textbf{maximal $p$-periodic block} in $x$ if $[n_0,n_1]\subset {\rm Per}_p(x)$, and there is no interval $[n'_0,n'_1]\subset {\rm Per}_p(x)$ such that $[n_0,n_1]$ is a proper subset of $[n'_0,n'_1]$.  
    \end{definition}
    \vspace{0.5em}

    Throughout the next part of this section, given a fast growing sequence $(p_i)$, let 
    $$q_0=0 \quad{\rm and} \quad q_i=\sum_{0<j\le i}p_j$$ for $i>0$.

    \begin{lemma}\label{6_1}
    Let $(x_i)$ be a sequence of points in a compact metric space $X$, $(p_i)$ be a fast growing sequence, and $z=z((p_i),(x_i))$ be the Oxtoby sequence. Then for any $m \ge1$ and maximal $p_m$-periodic block $[n_0,n_1]$ in $z$, 
    \begin{enumerate}
        \item if there is $k\in\mathbb{Z}$ such that $kp_m\in[n_0,n_1]$, then $[n_0,n_1]=[kp_m-q_{m-1}-1,kp_m+q_{m-1}]$;
        \item if for every $k\in\mathbb{Z}$, $kp_m\notin[n_0,n_1]$, then $[n_0,n_1]$ is a maximal $p_{m-1}$-periodic block in $z$.
    \end{enumerate}
   
    \end{lemma}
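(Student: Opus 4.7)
The plan is to prove the lemma by induction on $m$. For the base case $m=1$, step $1$ of the Oxtoby construction fills exactly the positions $n\equiv-1,0\pmod{p_1}$ with $x_1$, while positions in any other coset of $p_1\mathbb{Z}$ are filled at various later steps with values that disagree across the coset; thus ${\rm Per}_{p_1}(z)=\{kp_1-1,kp_1\}_{k\in\mathbb{Z}}$. The maximal $p_1$-periodic blocks are therefore exactly $[kp_1-1,kp_1]=[kp_1-q_0-1,kp_1+q_0]$, each containing $kp_1$, so (1) holds and (2) is vacuous.

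For the inductive step $m\ge 2$, I would first record the structural identity
\[
{\rm Per}_{p_m}(z)\;=\;{\rm Per}_{p_{m-1}}(z)\;\cup\;\bigcup_{l\in\mathbb{Z}}\bigl[lp_m-p_{m-1},\,lp_m+p_{m-1}-1\bigr],
\]
which holds because step $m$ fills precisely the slots $J(m-1,l(p_m/p_{m-1}))$ and $J(m-1,l(p_m/p_{m-1})-1)$, merging the two adjacent $p_{m-1}$-intervals around each multiple of $p_m$ into one solid block of length $2p_{m-1}$. For (1), I would attach to the fully-filled interval $[kp_m-p_{m-1},kp_m+p_{m-1}-1]$ the maximal $p_{m-1}$-blocks sitting at its two boundary multiples $kp_m\pm p_{m-1}$ (both multiples of $p_{m-1}$). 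By the inductive hypothesis these $p_{m-1}$-blocks are $[kp_m\pm p_{m-1}-q_{m-2}-1,\,kp_m\pm p_{m-1}+q_{m-2}]$; since $q_{m-2}<p_{m-1}$ each overlaps the step-$m$ interval and merges with it, yielding the combined block $[kp_m-q_{m-1}-1,\,kp_m+q_{m-1}]$ via the identity $q_{m-1}=p_{m-1}+q_{m-2}$. Maximality then amounts to showing that $kp_m\pm(q_{m-1}+1)$ are not in ${\rm Per}_{p_m}(z)$; this follows from the geometric-series bound $q_{m-1}<p_m/2$ (a consequence of $p_{i+1}/p_i\ge 3$), which places those two positions strictly inside the gap between the step-$m$ intervals around $kp_m$ and $(k\pm 1)p_m$, where the structural identity reduces ${\rm Per}_{p_m}(z)$ to ${\rm Per}_{p_{m-1}}(z)$, and where the inductive maximality of the relevant $p_{m-1}$-block excludes them.

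For (2), if $[n_0,n_1]$ contains no multiple of $p_m$, then by part (1) applied at $m$ together with the fact that two distinct maximal $p_m$-blocks cannot overlap, $[n_0,n_1]$ must be disjoint from every combined block $[lp_m-q_{m-1}-1,\,lp_m+q_{m-1}]$. Hence $[n_0,n_1]$ lies entirely in a gap between consecutive such combined blocks, where the structural identity gives ${\rm Per}_{p_m}(z)={\rm Per}_{p_{m-1}}(z)$ locally; so $[n_0,n_1]$ is a maximal interval in ${\rm Per}_{p_{m-1}}(z)$, and maximality as a $p_{m-1}$-block is immediate from ${\rm Per}_{p_{m-1}}(z)\subseteq{\rm Per}_{p_m}(z)$. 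The main technical point will be justifying the structural identity itself, in particular ruling out ``accidental'' $p_m$-periodicity at positions that are not yet filled by the end of step $m$; this is where the fast-growing condition $p_{i+1}/p_i\ge 3$ enters essentially, ensuring that the filled-versus-unfilled pattern after step $m$ genuinely fails to be $p_m$-periodic and that ${\rm Per}_{p_m}(z)$ coincides with the constructively periodic positions through step $m$.
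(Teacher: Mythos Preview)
Your proof is correct and follows the same approach as the paper's: induction on $m$ with the same base case, merging the step-$m$ interval $[kp_m-p_{m-1},kp_m+p_{m-1}-1]$ with the inductive $p_{m-1}$-blocks at its endpoints to obtain part (1), and for part (2) showing that the block lies in a gap where ${\rm Per}_{p_m}$ coincides with ${\rm Per}_{p_{m-1}}$. Your ``structural identity'' is exactly the paper's containment ${\rm Per}_{p_{m+1}}(z)\setminus{\rm Per}_{p_m}(z)\subset\bigcup_k[kp_{m+1}-p_m,kp_{m+1}+p_m]$ rephrased (the $\supseteq$ direction being trivial); the paper asserts this ``by the definition of Oxtoby sequence'' without further comment, and you are right to flag the exclusion of accidental $p_m$-periodicity as the substantive point behind it.
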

    \begin{proof}

       Note by the definition of Oxtoby sequence, we have

\begin{equation}\label{eq6.1}
               {\rm Per}_{p_{m+1}}(z)\backslash{\rm Per}_{p_{m}}(z)\subset\bigcup_{k\in\mathbb{Z}}[kp_{m+1}-p_{m},kp_{m+1}+p_{m}].
           \end{equation} 
        
        \noindent(1) We prove (1) by induction on $m$. When $m=1$, by the definition of Oxtoby sequence, every maximal $p_1$-periodic block in $z$ is of the form $[kp_1-1,kp_1]=[kp_1-q_0-1,kp_1+q_0]$ for some $k\in\mathbb{Z}$. This show that (1) holds for $m=1$. 
        
       Now we perform the induction step from $m$ to $m+1$. 
        
        \vspace{0.5em}
 Fix $k\in\mathbb{Z}$ such that $kp_m\in[n_0,n_1]$. By the definition of the Oxtoby sequence, $$[kp_{m+1}-p_{m}-1,kp_{m+1}+p_{m}]\subset {\rm Per}_{p_{m+1}}(z).$$ By our induction hypothesis and the fact that $p_{m}|p_{m+1}$, the maximal $p_m$-periodic block containing $kp_{m+1}-p_{m}$ is 
 
 $$[kp_{m+1}-p_m-q_{m-1}-1,kp_{m+1}-p_m+q_{m-1}]=[kp_{m+1}-q_{m}-1,kp_{m+1}-p_m+q_{m-1}]\subset {\rm Per}_{p_{m}}(z)\subset {\rm Per}_{p_{m+1}}(z).$$
  Similarly, the maximal $p_m$-periodic block containing $kp_{m+1}+p_{m}$ is  $$[kp_{m+1}+p_m-q_{m-1}-1,kp_{m+1}+p_m+q_{m-1}]=[kp_{m+1}+p_m-q_{m-1}-1,kp_{m+1}+q_{m}]\subset {\rm Per}_{p_{m}}(z)\subset {\rm Per}_{p_{m+1}}(z).$$  Then we have,
  \begin{align}
 & [kp_{m+1}-q_{m}-1,kp_{m+1}-p_m+q_{m-1}]\cup[kp_{m+1}-p_{m}-1,kp_{m+1}+p_{m}]\cup[kp_{m+1}+p_m-q_{m-1}-1,kp_{m+1}+q_{m}]\nonumber\\
  &=[kp_{m+1}-q_{m}-1,kp_{m+1}+q_{m}]\subset {\rm Per}_{p_{m+1}}(z)\nonumber.
\end{align}

        Consider the positions $kp_{m+1}-q_{m}$ and $kp_{m+1}+q_{m}+1$, we need to show these two positions do not belong to ${\rm Per}_{p_{m+1}}(z)$. Because $[kp_{m+1}-q_{m}-1,kp_{m+1}-p_m+q_{m-1}]$ and $[kp_{m+1}+p_m-q_{m-1}-1,kp_{m+1}+q_m]$ are maximal $p_m$-periodic blocks in $z$, we have $kp_{m+1}-q_{m}\notin{\rm Per}_{p_{m}}(z)$ and $kp_{m+1}+q_{m}+1\notin{\rm Per}_{p_{m}}(z)$. By the definition of fast growing sequence, $$(k-1)p_{m+1}+p_{m}<kp_{m+1}-q_{m}<kp_{m+1}-p_{m},$$ and $$kp_{m+1}+p_{m}<kp_{m+1}+q_{m}+1<(k+1)p_{m+1}-p_{m}.$$ So $\bigcup_{k\in\mathbb{Z}}[kp_{m+1}-p_{m},kp_{m+1}+p_{m}]$ does not contain $kp_{m+1}-q_{m}$ or $kp_{m+1}+q_{m}+1$. Then by (\ref{eq6.1}) we have $kp_{m+1}-q_{m}\notin{\rm Per}_{p_{m+1}}(z)$ and $kp_{m+1}+q_{m}+1\notin{\rm Per}_{p_{m+1}}(z)$. As a result, $[kp_{m+1}-q_m-1,kp_{m+1}+q_m]$ is a maximal $p_{m+1}$-periodic block in $z$, $[kp_{m+1}-q_m-1,kp_{m+1}+q_m]=[n_0,n_1]$. This ends the induction step of the proof of (1).

        \vspace{0.5em}
\noindent(2) Fix a maximal $p_{m+1}$-periodic block $[n_0,n_1]$ in $z$ such that there is no $k\in\mathbb{Z}$ with $kp_{m+1}\in[n_0,n_1]$. For every $k\in\mathbb{Z}$, $kp_{m+1}\in {\rm Per}_{p_{1}}(z)\subset{\rm Per}_{p_{m+1}}(z)$, so by part (1), $[kp_{m+1}-q_{m}-1,kp_{m+1}+q_{m}]$ is a maximal $p_{m+1}$-periodic block in $z$. By definition, two different maximal $p_{m+1}$-periodic blocks in $z$ are disjoint. Then we get that $[n_0,n_1]$ is disjoint from $\bigcup_{k\in\mathbb{Z}}[kp_{m+1}-q_{m},kp_{m+1}+q_{m}]$. By (\ref{eq6.1}), $[n_0,n_1]$ is disjoint from ${\rm Per}_{p_{m+1}}(z)\backslash{\rm Per}_{p_{m}}(z)$. Then we have $[n_0,n_1]\subset{\rm Per}_{p_m}(z)$ by $[n_0,n_1]\subset{\rm Per}_{p_{m+1}}(z)$. By definition, a maximal $p_{m+1}$-periodic block in $z$ contained in ${\rm Per}_{p_m}(z)$ is a maximal $p_{m}$-periodic block in $z$, so $[n_0,n_1]$ is a maximal $p_{m}$-periodic block in $z$. This ends the proof of (2).
\end{proof}

\begin{lemma}\label{L'1}
    Let $(x_i)$ be a sequence of points in compact metric space $X$, $(p_i)$ be a fast growing sequence, $z=z((p_i),(x_i))$ be an Oxtoby sequence and $x\in\overline{O}(z)$ be a Toeplitz sequence. Then for any $m \ge1$ and maximal $p_m$-periodic block $[n_0,n_1]$ in $x$, there is $i<m$ so that $x[n_0,n_1]=z[-q_i-1,q_i]$.
    
\end{lemma}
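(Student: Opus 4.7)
The plan is to transfer the question from $x$ to $z$ using the canonical factor map, then apply Lemma \ref{6_1} iteratively until the block is forced to sit at a multiple of some $p_l$, and finally use $p_l$-periodicity to shift it to the origin.

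First, I would let $\pi$ be the canonical factor map of $(\overline{O}(z),S)$ and set $\pi(x)=(n_k)$. Since $x$ and $S^{n_m}z$ have the same $p_m$-skeleton, one checks that ${\rm Per}_{p_m}(x)={\rm Per}_{p_m}(z)-n_m$ and that $x(a)=z(a+n_m)$ for every $a\in{\rm Per}_{p_m}(x)$. It follows that $[n_0,n_1]$ is a maximal $p_m$-periodic block in $x$ if and only if $[n_0+n_m,n_1+n_m]$ is a maximal $p_m$-periodic block in $z$, and in that case $x[n_0,n_1]=z[n_0+n_m,n_1+n_m]$. This is the key setup that lets us work entirely inside $z$.

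Next, I would descend using Lemma \ref{6_1}. Starting with the maximal $p_m$-periodic block $[n_0+n_m,n_1+n_m]$ in $z$: either it contains some multiple of $p_m$, and then Lemma \ref{6_1}(1) gives it the form $[kp_m-q_{m-1}-1,kp_m+q_{m-1}]$; or, by Lemma \ref{6_1}(2), it is already a maximal $p_{m-1}$-periodic block in $z$, and we reapply the dichotomy at level $m-1$. The base case $l=1$ is forced to fall into case (1), since by the first step of the Oxtoby construction every maximal $p_1$-periodic block in $z$ is of the form $[kp_1-1,kp_1]$ and therefore contains the multiple $kp_1$. Hence the descent terminates at some $1\le l\le m$, producing $k\in\mathbb{Z}$ with
$$[n_0+n_m,n_1+n_m]=[kp_l-q_{l-1}-1,\,kp_l+q_{l-1}].$$
Since this interval is contained in ${\rm Per}_{p_l}(z)$ and $kp_l$ is a multiple of $p_l$, we get $z(kp_l+j)=z(j)$ for all $j\in[-q_{l-1}-1,q_{l-1}]$, so $z[kp_l-q_{l-1}-1,kp_l+q_{l-1}]=z[-q_{l-1}-1,q_{l-1}]$. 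Combining with the earlier identity yields $x[n_0,n_1]=z[-q_{l-1}-1,q_{l-1}]$, and setting $i=l-1$ finishes the proof with $0\le i<m$.

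The only step that requires real care is the first one, namely verifying that maximal $p_m$-periodic blocks in $x$ correspond exactly to maximal $p_m$-periodic blocks in $z$ under the translation by $n_m$, together with agreement of the sequence values on those blocks. This is routine from the definition of the canonical factor map, but it is the pivotal observation that reduces the lemma to the structural statement already proved for $z$ in Lemma \ref{6_1}; the descent and the final periodic shift are then straightforward.
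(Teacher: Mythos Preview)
Your proof is correct and follows essentially the same approach as the paper: reduce to the case $x=z$ via the $p_m$-skeleton (the paper phrases this as ``without loss of generality $x=z$'' using shift invariance, you spell out the translation by $n_m$ from the canonical factor map), and then repeatedly apply the dichotomy of Lemma~\ref{6_1} until case~(1) fires. The only cosmetic difference is that the paper packages the descent as an induction on $m$, while you phrase it as an iterative descent terminating at some level $l$; the content is identical.
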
  

\begin{proof}
   Note that for every $m\ge1$, the statement of the lemma depends only on the $p_m$-skeleton of $x$. Since $x$ has the same $p_m$-skeleton as $S^k(z)$ for some $k\in\mathbb{Z}$, and the result of this lemma is invariant under the shift action, without loss of generality, we can assume that $x=z$. 
   
   We prove this lemma by induction on $m$. When $m=1$, by the definition of Oxtoby sequence, every maximal $p_1$-periodic block in $z$ is of the form $[kp_1-1,kp_1]$ for some $k\in\mathbb{Z}$ and $z[kp_1-1,kp_1]=z[-q_0-1,q_0]$. This implies that the lemma holds for $m=1$.
   
   Now we perform the induction step from $m$ to $m+1$. Let $[n_0,n_1]$ be a maximal $p_{m+1}$-periodic block in $z$, we need to show there is $i\le m$ so that $z[n_0,n_1]=z[-q_i-1,q_i]$. There are two cases:
   
   \vspace{0.5em}
  \noindent{\textbf{Case 1.} There is $k\in\mathbb{Z}$ so that $kp_{m+1}\in[n_0,n_1]$;}
  \vspace{0.5em}

  In this case, by Lemma \ref{6_1}, $[n_0,n_1]=[kp_{m+1}-q_m-1,kp_{m+1}+q_m]$. Since $[n_0,n_1]\subset{\rm Per}_{m+1}(z)$, we have $z[n_0,n_1]=z[-q_m-1,q_m]$.

\vspace{0.5em}
  \noindent{\textbf{Case 2.} There is no $k\in\mathbb{Z}$ so that $kp_{m+1}\in[n_0,n_1]$.}
  \vspace{0.5em}

  By Lemma \ref{6_1}, $[n_0,n_1]$ is a maximal $p_{m}$-periodic block in $z$. Then by the induction hypothesis, there is $i<m$ so that $z[n_0,n_1]=z[-q_i-1,q_i]$.

  So the lemma holds for $m+1$. By induction, we are done.
 \end{proof}

\begin{lemma}\label{L'1_1}
    Let $(x_i)$ be a sequence of points in compact metric space $X$, $(p_i)$ is a fast growing sequence, $z=z((p_i),(x_i))$ be an Oxtoby sequence and $x\in\overline{O}(z)$ be a Toeplitz sequence. Then for any $m \ge1$ and maximal $p_m$-periodic block $[n_0,n_1]$ in $x$, if there exists $i<m$ so that $x[n_0,n_1]=z[-q_i-1,q_i]$, then $[n_0,n_1]$ is a maximal $p_{i+1}$-periodic block in $x$.
    \end{lemma}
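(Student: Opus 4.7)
The plan is to first reduce to the case $x=z$ using exactly the same trick that proves Lemma \ref{L'1}. Since $x\in\overline{O}(z)$ is Toeplitz, $x$ has the same $p_m$-skeleton as $S^az$ for some $a\in\mathbb{Z}$. Because $p_{i+1}\mid p_m$, the inclusions ${\rm Per}_{p_{i+1}}(x)\subset{\rm Per}_{p_m}(x)$ and ${\rm Per}_{p_{i+1}}(S^az)\subset{\rm Per}_{p_m}(S^az)$ force $x$ and $S^az$ to share the $p_{i+1}$-skeleton as well: a simple chase shows $n\in{\rm Per}_{p_{i+1}}(x)$ iff $n\in{\rm Per}_{p_{i+1}}(S^az)$, and the two sequences agree there. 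Being a maximal $p_{i+1}$-periodic block depends only on the $p_{i+1}$-skeleton, and our hypothesis $x[n_0,n_1]=z[-q_i-1,q_i]$ translates into $z[n_0+a,n_1+a]=z[-q_i-1,q_i]$ with $[n_0+a,n_1+a]$ a maximal $p_m$-periodic block in $z$. So the whole problem reduces to proving the lemma under the additional assumption $x=z$.

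For $x=z$, I will iterate Lemma \ref{6_1} to obtain a normal form for maximal $p_m$-periodic blocks. By downward induction on $m'\le m$: given that $[n_0,n_1]$ is a maximal $p_m$-periodic block, either it contains some multiple $kp_m$ (in which case Lemma \ref{6_1}(1) gives $[n_0,n_1]=[kp_m-q_{m-1}-1,kp_m+q_{m-1}]$, so we may take $m'=m$), or it contains no multiple of $p_m$ and by Lemma \ref{6_1}(2) it is a maximal $p_{m-1}$-periodic block, to which we apply the induction hypothesis. The recursion terminates by $m'=1$ at the latest, because every maximal $p_1$-periodic block of the Oxtoby sequence is of the form $[kp_1-1,kp_1]=[kp_1-q_0-1,kp_1+q_0]$. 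Thus we obtain $1\le m'\le m$ and $k\in\mathbb{Z}$ with
\[
[n_0,n_1]=[kp_{m'}-q_{m'-1}-1,\,kp_{m'}+q_{m'-1}].
\]

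Now the content of this block, read from $z$ using that it sits inside ${\rm Per}_{p_{m'}}(z)$ and contains $kp_{m'}$, is exactly $z[-q_{m'-1}-1,q_{m'-1}]$. Combined with the hypothesis $z[n_0,n_1]=z[-q_i-1,q_i]$ this forces $z[-q_{m'-1}-1,q_{m'-1}]=z[-q_i-1,q_i]$; comparing lengths yields $q_{m'-1}=q_i$, and since the sequence $(q_j)$ is strictly increasing this gives $m'=i+1$. Hence $[n_0,n_1]=[kp_{i+1}-q_i-1,kp_{i+1}+q_i]$, which by Lemma \ref{6_1}(1) is a maximal $p_{i+1}$-periodic block in $z$; transporting back through the shift by $a$ gives the conclusion for $x$.

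I do not expect a serious obstacle: the only delicate point is verifying that the reduction to $x=z$ is legitimate for a statement involving the $p_{i+1}$-skeleton (not just the $p_m$-skeleton), but the divisibility $p_{i+1}\mid p_m$ handles this cleanly. All remaining steps are combinatorial bookkeeping around Lemma \ref{6_1}, which already encodes the structure of maximal periodic blocks in the Oxtoby sequence.
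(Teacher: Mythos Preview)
Your proposal is correct and follows essentially the same approach as the paper: both arguments reduce to $x=z$ via the $p_m$-skeleton and then repeatedly apply Lemma~\ref{6_1} to peel down the periodicity level. The paper packages this as an upward induction on $m$, while you run the same iteration downward and finish with a clean length comparison $q_{m'-1}=q_i$; the only cosmetic slip is that the final appeal to Lemma~\ref{6_1}(1) is unnecessary, since your downward iteration already records that $[n_0,n_1]$ is a maximal $p_{m'}$-periodic block at the step where it terminates.
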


    \begin{proof}
      Note that for every $m\ge1$, the statement of the lemma depends only on the $p_m$-skeleton of $x$. Since $x$ has the same $p_m$-skeleton as $S^k(z)$ for some $k\in\mathbb{Z}$, and the result of this lemma is invariant under the shift action, without loss of generality, we can assume that $x=z$. 

      We prove this lemma by induction on $m$. When $m=1$, the result is trivial. Suppose this lemma holds for some $m\ge1$. Given a maximal $p_{m+1}$-periodic block $[n_0,n_1]$ in $z$,we have two cases:

\vspace{0.5em}
  \noindent{\textbf{Case 1.} There is $k\in\mathbb{Z}$ so that $kp_{m+1}\in[n_0,n_1]$;}
  \vspace{0.5em}

  By Lemma \ref{6_1}, $z[n_0,n_1]=z[-q_m-1,q_m]$, and $[n_0,n_1]$ is a maximal $p_{m+1}$-periodic block in $z$, there is nothing to prove.

  \vspace{0.5em}
  \noindent{\textbf{Case 2.} There is no $k\in\mathbb{Z}$ so that $kp_{m+1}\in[n_0,n_1]$;}
  \vspace{0.5em}

  By Lemma \ref{6_1}, $[n_0,n_1]$ is a maximal $p_{m}$-periodic block in $z$. Then by Lemma \ref{L'1}, there is $i<m$ so that $z[n_0,n_1]=z[-q_i-1,q_i]$, by the induction hypothesis, $[n_0,n_1]$ is a maximal $p_{i+1}$-periodic block in $z$.

  So the lemma holds for $m+1$. By induction, we are done.

    \end{proof}

\begin{lemma}\label{L'2}
   Let $(x_i)$ be a sequence of points in compact metric space $X$, $(p_i)$ is a fast growing sequence,  $z=z((p_i),(x_i))$ be an Oxtoby sequence and $x\in\overline{O}(z)$ be a Toeplitz sequence. Suppose $m_1\le m_2$, and both of the following hold:
   
   \begin{enumerate}
       \item the interval $[n_0,n_1]$ is a maximal $p_{m_1+1}$-periodic block of $x$ but not a maximal $p_{m_1}$-periodic block of $x$;
       \item the interval $[n_2,n_3]$ is a maximal $p_{m_2+1}$-periodic block of $x$ but not a maximal $p_{m_2}$-periodic block of $x$.
   \end{enumerate}
   Then $p_{m_1+1}$ divides both $n_2-n_0$ and $n_1-n_3$.
\end{lemma}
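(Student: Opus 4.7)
My plan is to first pin down what $x[n_0,n_1]$ and $x[n_2,n_3]$ must look like, then locate these blocks explicitly on the integer line using the canonical factor map, and finally compute the differences modulo $p_{m_1+1}$.

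First, I will show that $x[n_0,n_1] = z[-q_{m_1}-1,q_{m_1}]$ (and similarly $x[n_2,n_3] = z[-q_{m_2}-1,q_{m_2}]$). By Lemma \ref{L'1}, there exists $i < m_1+1$ with $x[n_0,n_1] = z[-q_i-1,q_i]$, and by Lemma \ref{L'1_1}, $[n_0,n_1]$ is then a maximal $p_{i+1}$-periodic block in $x$. If $i+1 \le m_1$, then $[n_0,n_1]\subset \mathrm{Per}_{p_{i+1}}(x)\subset \mathrm{Per}_{p_{m_1}}(x)$, and since $n_0-1,n_1+1\notin \mathrm{Per}_{p_{m_1+1}}(x)\supset \mathrm{Per}_{p_{m_1}}(x)$ the block $[n_0,n_1]$ would also be a maximal $p_{m_1}$-block, contradicting the hypothesis. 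Hence $i=m_1$, and in particular $n_1 - n_0 = 2q_{m_1}+1$.

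Next, letting $\pi(x)=(n_i)$ be the image under the canonical factor map, the $p_{m_1+1}$-skeleton of $x$ agrees with that of $S^{n_{m_1+1}}z$, so $[n_0+n_{m_1+1},n_1+n_{m_1+1}]$ is a maximal $p_{m_1+1}$-periodic block of $z$ with $z$-values equal to $z[-q_{m_1}-1,q_{m_1}]$. By Lemma \ref{6_1}, any maximal $p_{m_1+1}$-periodic block in $z$ is either of the form $[kp_{m_1+1}-q_{m_1}-1,kp_{m_1+1}+q_{m_1}]$ for some $k\in\mathbb Z$, or is also a maximal $p_{m_1}$-block. The latter case would transport back to $x$ (via the same shift argument applied to the $p_{m_1}$-skeleton, using $n_{m_1}\equiv n_{m_1+1}\pmod{p_{m_1}}$) and contradict that $[n_0,n_1]$ is not maximal $p_{m_1}$. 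Thus there is $k\in\mathbb Z$ with
$$n_0 = kp_{m_1+1} - q_{m_1} - 1 - n_{m_1+1},$$
and analogously $k'\in\mathbb Z$ with $n_2 = k'p_{m_2+1} - q_{m_2} - 1 - n_{m_2+1}$.

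Finally, I will just compute. Since $m_1\le m_2$, we have $p_{m_1+1}\mid p_{m_2+1}$, so $k'p_{m_2+1}-kp_{m_1+1}\equiv 0\pmod{p_{m_1+1}}$. Also $q_{m_2}-q_{m_1}=\sum_{j=m_1+1}^{m_2}p_j$ is a sum of multiples of $p_{m_1+1}$, and the odometer relation gives $n_{m_2+1}\equiv n_{m_1+1}\pmod{p_{m_1+1}}$. Therefore
$$n_2 - n_0 = (k'p_{m_2+1} - kp_{m_1+1}) - (q_{m_2}-q_{m_1}) - (n_{m_2+1}-n_{m_1+1}) \equiv 0 \pmod{p_{m_1+1}}.$$
For $n_1-n_3$, use $n_1-n_0 = 2q_{m_1}+1$ and $n_3-n_2 = 2q_{m_2}+1$ to write $n_1-n_3 = (n_0-n_2) + 2(q_{m_1}-q_{m_2})$, which is also divisible by $p_{m_1+1}$. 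The only real point of difficulty is the first paragraph, making sure that the combination ``maximal $p_{m_1+1}$ but not maximal $p_{m_1}$'' forces $i=m_1$ in Lemma \ref{L'1}; everything else is a skeleton transfer plus arithmetic in the odometer.
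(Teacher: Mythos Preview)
Your argument is correct and follows essentially the same route as the paper: use Lemma~\ref{6_1} to pin down the left endpoints of the two blocks explicitly, then check divisibility of the differences. The paper streamlines the bookkeeping by first observing that the statement depends only on the $p_{m_2+1}$-skeleton of $x$ and hence reducing WLOG to $x=z$; this eliminates the odometer offsets $n_{m_1+1},n_{m_2+1}$ you carry around, and also makes your first paragraph (the appeal to Lemmas~\ref{L'1} and~\ref{L'1_1} to force $i=m_1$) unnecessary, since once $x=z$ Lemma~\ref{6_1}(1) gives both endpoints $n_0=kp_{m_1+1}-q_{m_1}-1$ and $n_1=kp_{m_1+1}+q_{m_1}$ directly.
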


\begin{proof}
    
    Note that for every $m_1$ and $m_2$, the statement of the lemma depends only on the $p_{m_2+1}$-skeleton of $x$. Since $x$ has the same $p_{m_2+1}$-skeleton as $S^k(z)$ for some $k\in\mathbb{Z}$, and the result of this lemma is invariant under the shift action, without loss of generality, we can assume that $x=z$. 
    
     Since the interval $[n_0,n_1]$ does not satisfy the conclusion of Lemma \ref{6_1}(2) for $m=m_1+1$, by Lemma \ref{6_1}(1), we know that $n_0=kp_{m_1+1}-q_{m_1}-1$ for some $k\in\mathbb{Z}$. Similarly, $n_2=k'p_{m_2+1}-q_{m_2}-1$ for some $k'\in\mathbb{Z}$. Since we have $m_1\le m_2$, by the definition of fast growing sequence, $p_{m_1+1}$ divides $n_2-n_0$. By the same reason, $p_{m_1+1}$ divides $n_1-n_3$. This ends the proof.
\end{proof}


\begin{proof}[Proof of Theorem \ref{MT1'}] 
\noindent(1)$\Rightarrow$(2) By the definition of Oxtoby sequence, for every $n\in\mathbb{Z}$, we have $z(n)=z(-n-1)$, so $z^{-1}=S^{-1}z$. When $x=S^nz$, then $x^{-1}=[S^n(z)]^{-1}=S^{-n}(z^{-1})=S^{-n-1}z$. So $S^{-2n-1}$ is a conjugacy map from $(\overline{O}(z)$ to itself so that $S^{-2n-1}(x)=x^{-1}$.

(2)$\Rightarrow$(1) Let $x\in \overline{O}(z)$ be a Toeplitz sequence and $f$ is a conjugacy map from $\overline{O}(z)$ to itself so that $f(x)=x^{-1}$. Since (1) and (2) of Theorem \ref{MT1'} are invariant under the shift action, without loss of generality, we can assume that $0\in {\rm Per}_{p_1}(x)$. For $i\ge1$, let $[-n_i,m_i]$ be the maximal $p_i$-periodic block in $x$ which contains the position 0, with $n_i,m_i\geq 0$. Since $x$ is a Toeplitz sequence, both $n_i$ and $m_i$ are increasing and go to $+\infty$ as $i$ goes to infinity. By Lemma \ref{L'1}, for every 
$i\ge1$, there is $t_i<i$ such that $x[-n_i,m_i]=z[-q_{t_i}-1,q_{t_i}]$. Thus we have $n_i+m_i=2q_{t_i}+1$ and $t_i\rightarrow\infty$. By Lemma \ref{L'1_1}, for every $i\ge1$, $[-n_i,m_i]$ is a maximal $p_{t_i+1}$-periodic block in $x$. And by Lemma \ref{L'1}, for every $i\ge1$, $[-n_i,m_i]$ is not a maximal $p_{t_i}$-periodic block in $x$. 

For the integer sequence $(n_{i}-m_{i})$, either it is bounded or it is unbounded. If it is bounded then it has a constant subsequence; if it is unbounded, it has a strictly monotonic subsequence. So we have three cases.

\noindent\textbf{Case 1}. There is a subsequence $(i_j)$ of natural numbers such that $(n_{i_j}-m_{i_j})$ is constant equal to $s_0$.\\

\begin{figure}[h!]
\centering
\begin{tikzpicture}
\usetikzlibrary{math}
\usetikzlibrary{decorations.pathreplacing}

\node (x0) at (-5.0, 0.0)    {$\cdot$} ;
\node (x0) at (-4.6 , 0.0)    {$\cdot$} ;
\node (x0) at (-4.2, 0.0)    {$\cdot$} ;
\node (00) at (-4, 0.6)     {$-n_{i_2}$} ;

\node  at (9.2, 0.6)     {$m_{i_2}$} ;
\node  at (0, 0.6)     {$-n_{i_1}$} ;
\node  at (5.2, 0.6)     {$m_{i_1}$} ;
\node (02) at (2 , 0.6)     {$-n_{i_0}$} ;
\node (03) at ( 3.2, 0.6)     {$m_{i_0}$} ;
\node (20) at (9.4, 0.0)    {$\cdot$} ;
\node (20) at (9.8, 0.0)    {$\cdot$} ;
\node (20) at (10.2, 0.0)    {$\cdot$} ;

\draw[-] (-15 * 0.4, 0.3) -- (30 * 0.4, 0.3) ;
\draw[-] (-15 * 0.4, -0.3) -- (30 * 0.4, -0.3) ;
\draw[-] (0, -0.3) -- (0, 0.3) ;
\draw[-] (5.2, -0.3) -- (5.2, 0.3) ;
\draw[-] (-4, -0.3) -- (-4, 0.3) ;

\draw[-] (3.2, -0.3) -- (3.2, 0.3) ;

\draw[-] (2, -0.3) -- (2, 0.3) ;

\draw[-] (9.2, -0.3) -- (9.2, 0.3) ;

\end{tikzpicture}
\caption{Case 1. }
\end{figure}

In this case, for every $j\ge0$, by $n_{i_j}+m_{i_j}=2q_{t_{i_j}}+1$, we have $n_{i_j}=q_{t_{i_j}}+\frac{s_0+1}{2}$. This means that $$x[-q_{t_{i_j}}-\frac{s_0+1}{2},q_{t_{i_j}}-\frac{s_0-1}{2}]=z[-q_{t_{i_j}}-1,q_{t_{i_j}}].$$ Since $q_{t_{i_j}}$ diverges to $+\infty$, we have $x=S^{\frac{s_0-1}{2}}(z).$

\noindent\textbf{Case 2.} There is a subsequence $(i_j)$ of natural numbers such that $(n_{i_j}-m_{i_j})$ is strictly increasing.

We will show that this case is impossible.

\begin{figure}[h!]
\centering
\begin{tikzpicture}
\usetikzlibrary{math}

\usetikzlibrary{decorations.pathreplacing}

\node (x0) at (-5.0, 0.0)    {$\cdot$} ;
\node (x0) at (-4.6 , 0.0)    {$\cdot$} ;
\node (x0) at (-4.2, 0.0)    {$\cdot$} ;
\node (00) at (-4, 0.6)     {$-n_{i_2}$} ;

\node  at (9.2, 0.6)     {$m_{i_2}$} ;
\node  at (2, 0.6)     {$-n_{i_1}$} ;
\node  at (7.2, 0.6)     {$m_{i_1}$} ;
\node (02) at (5.2 , 0.6)     {$-n_{i_0}$} ;
\node (03) at ( 6.0, 0.6)     {$m_{i_0}$} ;
\node (20) at (9.4, 0.0)    {$\cdot$} ;
\node (20) at (9.8, 0.0)    {$\cdot$} ;
\node (20) at (10.2, 0.0)    {$\cdot$} ;

\draw[-] (-6, 0.3) -- (12, 0.3) ;
\draw[-] (-6 , -0.3) -- (12, -0.3) ;
\draw[-] (5.2, -0.3) -- (5.2, 0.3) ;
\draw[-] (7.2, -0.3) -- (7.2, 0.3) ;
\draw[-] (-4, -0.3) -- (-4, 0.3) ;

\draw[-] (6.0, -0.3) -- (6.0, 0.3) ;

\draw[-] (2, -0.3) -- (2, 0.3) ;

\draw[-] (9.2, -0.3) -- (9.2, 0.3) ;

\end{tikzpicture}
\caption{Case 2. }
\end{figure}

 For every $j\ge0$, since $[-n_{i_j},m_{i_j}]$ is a maximal $p_{t_{i_j}+1}$-periodic block in $x$, we have that $[m_{i_j}-n_{i_j},2m_{i_j}]$ is a maximal $p_{t_{i_j}+1}$-periodic block in $S^{-m_{i_j}}(x)$, in particular, 
 \begin{equation}\label{eq6.2}
      [m_{i_j}-n_{i_j},2m_{i_j}]\subset {\rm Per}_{p_{t_{i_j}+1}}(S^{-m_{i_j}}(x)). 
 \end{equation}
 By Lemma \ref{L'2}, for any $j<j'$, $p_{t_{i_j}+1}$ divides $m_{i_{j'}}-m_{i_j}$, so $S^{-m_{i_{j'}}}(x)$ and $S^{-m_{i_j}}(x)$ have the same $p_{t_{i_j}+1}$-skeleton. Thus $S^{-m_{i_{j'}}}(x)$ and $S^{-m_{i_j}}(x)$ agree on every position in ${\rm Per}_{p_{t_{i_j}+1}}(S^{-m_{i_j}}(x))={\rm Per}_{p_{t_{i_j}+1}}(S^{-m_{i_{j'}}}(x))$. So by (\ref{eq6.2}), for every $j<j'$, we have   
 \begin{equation}\label{6.3}
    \forall k\in[m_{i_j}-n_{i_j},2m_{i_j}]\quad S^{-m_{i_j}}(x)(k)=S^{-m_{i_{j'}}}(x)(k).  
 \end{equation}
  By the assumption that $(n_{i_j}-m_{i_j})$ is strictly increasing, $m_{i_j}-n_{i_j}$ diverges to $-\infty$ when $j$ goes to $+\infty$. So for every $k\in\mathbb{Z}$, by (\ref{6.3}), we have that $S^{-m_{i_j}}(x)(k)$ is eventually constant. In particular, $S^{-m_{i_j}}(x)$ converges to some point $y\in\overline{O}(z)$. By (\ref{eq6.2}) and (\ref{6.3}), we have that $[m_{i_j}-n_{i_j},2m_{i_j}]\subset {\rm Per}_{p_{t_{i_j}+1}}(y)$ for every $j\ge0$, so $y$ is a Toeplitz sequence. Since $f$ is a conjugacy map from $\overline{O}(z)$ to itself such that $f(x)=x^{-1}$ and  $S^{-m_{i_j}}(x)$ converges to $y$, we have that $S^{-m_{i_j}}(x^{-1})$ converges to $f(y)$. However, since for any $j\ge0$, the interval $[-n_{i_j},m_{i_j}]$ is a maximal $p_{t_{i_j}}$-periodic block in $x$, we have that $[-m_{i_j},n_{i_j}]$ is a maximal $p_{t_{i_j}}$-periodic block in $x^{-1}$ and $[0,m_{i_j}+n_{i_j}]$ is a maximal $p_{t_{i_j}}$-periodic block in $S^{-m_{i_j}}(x^{-1})$. As a result, 
  \begin{equation}\label{eq6.3}
     \forall j\ge0\quad -1\notin {\rm Per}_{p_{t_{i_j}}}(S^{-m_{i_j}}(x^{-1})). 
 \end{equation}
 
 By Lemma \ref{L'2}, for any $j<j'$, $p_{t_{i_j}+1}$ divides $m_{i_{j'}}-m_{i_j}$, so $S^{-m_{i_{j'}}}(x^{-1})$ and $S^{-m_{i_j}}(x^{-1})$ have the same $p_{t_{i_j}+1}$-skeleton. By (\ref{eq6.3}), we have that 
 \begin{equation}\label{6}
     \forall j'>j\ge0\quad -1\notin {\rm Per}_{p_{t_{i_j}}}(S^{-m_{i_{j'}}}(x^{-1})). 
 \end{equation}
  Then by (\ref{6}), $-1\notin {\rm Per}_{p_{t_{i_j}+1}}(f(y))$ for any $j\ge0$. In other words $-1\notin{\rm Per}(f(y))$, so $f(y)$ is not a Toeplitz sequence. This contradicts Lemma \ref{ToptoTop}. The contradiction shows that this case is impossible.

\noindent\textbf{Case 3.} There is a subsequence $(i_j)$ of natural numbers such that $(n_{i_j}-m_{i_j})$ is strictly decreasing.\

We will show that this case is also impossible.

In this case, $x^{-1}\in \overline{O}(z)$ is a Toeplitz sequence and $f^{-1}$ is a conjugacy map from $\overline{O}(z)$ to itself so that $f^{-1}(x^{-1})=x=(x^{-1})^{-1}$. Also, for every $i\ge1$, $[-m_i,n_i]$ is a maximal $p_i$-periodic block in $x^{-1}$. So $x^{-1}$ and $f^{-1}$ satisfy Case 2, but Case 2 is impossible, a contradiction.

\end{proof}

        \vspace{0.5em}    
    \section{ The equivalence relation $E_{{\rm tt}}([0,1]^{\mathbb{N}})$} \label{Borel reduction}
    
          In this section, we look at the topological type equivalence relation defined in Section \ref{TT relation} and connect it to conjugacy of Oxtoby systems. 

            \begin{theorem}\label{oxtoby}
                
           Let $(X,d)$ be a compact metric space, $(x_i)$ and $(x_i')$ be two sequences in $X$. Fix a fast growing sequence $(p_i)$. Then the following are equivalent
           \begin{enumerate}
               \item The Oxtoby systems $(\overline{O}(z((p_i),(x_i))),S)$  and $(\overline{O}(z'((x_i'),(p_i))),S)$ are conjugate.
               \item $(x_i)$ and $(x_i')$ have the same topological type. 
           \end{enumerate} 
     \end{theorem}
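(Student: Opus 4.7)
The plan is to prove (1) and (2) separately. Both directions first handle the degenerate case that $L((x_i))$ or $L((x'_i))$ is a singleton: by Proposition \ref{SIN} the corresponding Oxtoby system is then conjugate to the equicontinuous odometer $(G,\widehat{1})$, so a conjugacy between the two Oxtoby systems forces both to be equicontinuous, hence by Proposition \ref{SIN} again both $L$-sets are singletons, in which case both sequences converge and trivially share the same topological type.

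Assume henceforth $|L((x_i))|,|L((x'_i))|\ge 2$. For (1)$\Rightarrow$(2), let $F$ be the conjugacy; Theorem \ref{Main1} yields $F(z)=S^m z'$ for some $m\in\mathbb{Z}$, where $z,z'$ are the Oxtoby sequences. The induced automorphism of the common maximal equicontinuous factor $G$ must commute with the odometer rotation, hence is translation by $m$. I choose $g=(g_i)\in G$ with $g_i\in J(i,0)$ for all $i$, which guarantees $0\in{\rm Aper}(g)$ and, by the translation, $-m\in{\rm Aper}(g+m)$. Given an increasing $(n_k)$ with $x_{n_k}\to a$, set $l_k:=g_{n_k-1}+p_{n_k}-p_{n_k-1}$. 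A direct computation verifies $z(l_k)=x_{n_k}$, $z'(l_k)=x'_{n_k}$, and $\pi(S^{l_k}z)\to g$ in $G$. By Lemmas \ref{Will} and \ref{CS}, every subsequential limit of $(S^{l_k}z)$ equals the unique element $y_a^g\in\pi^{-1}(g)$ with aperiodic value $a$; hence $S^{l_k}z\to y_a^g$, so $S^{l_k+m}z'=F(S^{l_k}z)\to F(y_a^g)$, and evaluating at position $-m$ (which lies in ${\rm Aper}(g+m)$) gives $x'_{n_k}\to F(y_a^g)(-m)\in L((x'_i))$. The symmetric argument with $F^{-1}$ yields the converse.

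For (2)$\Rightarrow$(1), Fact \ref{TT implies homeo} provides a homeomorphism $h:L((x_i))\to L((x'_i))$ that preserves subsequential limits. I will define $F:\overline{O}(z)\to\overline{O}(z')$ position-by-position: for $y$ with $\pi(y)=g$ and $k\in\mathbb{Z}$, if $k\in{\rm Per}(y)$ there is a well-defined step index $j(k,g)$ at which the corresponding position of $z$ gets filled in the Oxtoby construction (depending only on $k$ and $g$), so $y(k)=x_{j(k,g)}$; set $F(y)(k):=x'_{j(k,g)}$. If $k\in{\rm Aper}(y)$, Lemma \ref{CS} gives a single value $a=y(k)\in L((x_i))$ and I set $F(y)(k):=h(a)$. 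Because both constructions use $(p_i)$, $F(y)\in\overline{O}(z')$, and $F$ is $S$-equivariant; the analogous construction with $h^{-1}$ provides the inverse.

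The hard part will be proving continuity of $F$. The case $k\in{\rm Per}(y)$ is routine, since $\pi(y_n)\to\pi(y)$ stabilizes coordinate-wise, freezing $j(k,\pi(y_n))$ at $j(k,\pi(y))$ for large $n$. The subtle case is $k\in{\rm Aper}(y)$ with $k\in{\rm Per}(y_n)$ for all $n$; then $F(y_n)(k)=x'_{j_n}$ where $j_n:=j(k,\pi(y_n))$. I will show $j_n\to\infty$ (otherwise a bounded $j_n$ combined with coordinate-wise stabilization of $\pi(y_n)$ forces $k\in{\rm Per}(y)$, a contradiction). Then $y_n(k)=x_{j_n}\to a$, and the topological-type hypothesis---applied after extracting a strictly increasing subsequence of $(j_n)$ and combined with a compactness-and-contradiction argument---forces $x'_{j_n}\to h(a)=F(y)(k)$, completing the continuity proof.
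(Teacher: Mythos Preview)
Your proof is correct, but both directions take routes that differ from the paper's.

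For (1)$\Rightarrow$(2) in the non-degenerate case, the paper argues by contrapositive: assuming the topological types differ, it picks a non-Toeplitz point $y\in\overline{O}(z)$ with $\pi(y)=(n_i)$, and uses Lemma~\ref{Will1} to show that $(S^{n_{i_k-1}}z)$ converges while $(S^{n_{i_k-1}}z')$ does not (when $(x_{i_k})$ converges and $(x'_{i_k})$ does not), then invokes Lemma~\ref{min} and Theorem~\ref{Main1}. Your direct approach---fixing $g\in G$ with $g_i\in J(i,0)$, defining $l_k$ explicitly, and pushing through $F$---is equally valid and of similar length; it has the mild advantage of producing the limit $F(y_a^g)(-m)$ as an explicit witness that $(x'_{n_k})$ converges.

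For (2)$\Rightarrow$(1) the difference is more substantial. The paper avoids building the conjugacy: it simply checks that $(S^n z)_{n\in\mathbb{N}}$ and $(S^n z')_{n\in\mathbb{N}}$ have the same topological type (periodic coordinates eventually stabilise identically for both, and aperiodic coordinates converge simultaneously by the hypothesis on $(x_i),(x'_i)$), then appeals to Lemma~\ref{min2}. This is shorter and sidesteps the continuity analysis entirely. Your explicit position-by-position construction of $F$ is correct and more concrete---in particular it makes the dependence on the homeomorphism $h$ of Fact~\ref{TT implies homeo} visible---but you pay for this with the delicate continuity argument at aperiodic positions (the extraction of an increasing subsequence of $(j_n)$ plus the compactness-contradiction step). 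Both routes work; the paper's buys brevity, yours buys an explicit formula for the conjugacy.

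One minor point: in the degenerate case you only spell out (1)$\Rightarrow$(2). For (2)$\Rightarrow$(1) you should add the one-line observation that once both $L$-sets are singletons, Proposition~\ref{SIN} makes both systems conjugate to the common odometer $(G,\widehat{1})$, hence to each other.
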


            \begin{proof}
            Write $z$ for $z((p_i),(x_i))$ and $z'$ for $z'((x_i),(p_i))$ and let $\pi$ be the canonical factor map of $\overline{O}(z)$.
            
            \textbf{Case 1}: Suppose one of $L(x_i)$ and $L(x_i')$ is singleton, without loss of generality, we assume $L(x_i)$ is a singleton. 
          By Proposition \ref{SIN}, the system $(\overline{O}(z),S)$ is conjugate to its maximal equicontinuous factor.

            (1) $\Rightarrow$ (2) Since the systems $(\overline{O}(z),S)$ and $(\overline{O}(z'),S)$ are conjugate, $(\overline{O}(z'),S)$ is also conjugate with its maximal equicontinuous factor which is the odometer system $(\varprojlim \mathbb{Z}_{p_i},\hat{1})$. We claim $L(x'_i)$ is also a singleton. Indeed suppose that $L(x'_i)$ has at least two points, then by Theorem \ref{Main1}, the only automorphisms of $(\overline{O}(z'),S)$ are shifts. But in an odometer system, any two points can be mapped by an automorphism, a contradiction. Thus, $L(x'_i)$ is also a singleton, which means both $(x_i)$ and $(x'_i)$ are convergent sequences, so they have the same topological type.

            (2) $\Rightarrow$ (1) Since $(x_i)$ and $(x'_i)$ have the same topological type, we have $L(x'_i)$ is also a singleton. By Proposition \ref{Will}, we have both systems $(\overline{O}(z'),S)$ and $(\overline{O}(z),S)$ are conjugate to their maximal equicontinuous factor $(\varprojlim \mathbb{Z}_{p_i},\hat{1})$.

       \textbf{Case 2}:  Suppose both $L(x_i)$ and $L(x'_i)$ have at least two points. 

            (1)$\Rightarrow$ (2). Suppose $(x_i)$ and $(x_i')$ have different topological types. Without loss of generality, assume that there exists an increasing sequence $(i_k)$ such that $(x_{i_k})$ converges while $(x_{i_k}')$ diverges. By Lemma \ref{non-Toeplitz exist}, we can find a non-Topelitz sequence $y\in \overline{O}(z
            )$ and write $\pi(y)=(n_i)$. 
            
            By Lemma \ref{Will1}(1), for every $m\in\mathbb{Z}$, there is an $k_0$ such that for all $k\geq k_0$ we have
            $$
            -n_{i_k}<m<p_{i_k}-n_{i_k}.
            $$
             We claim that the sequence $(S^{n_{i_k-1}}z)$ converges. To this end we need to show that the sequence $(S^{n_{i_k-1}}z)(m)$ converges for any $m\in \mathbb{Z}$. If $m\in {\rm Per}(y)$, then $(S^{n_{i_k-1}}z)(m)$ is eventually constant so converges. If $m\in {\rm Aper}(y)$ then we have $S^{n_{i_k-1}}z(m)=x_{i_k}$ by Lemma \ref{Will1}(2). By our assumption, $S^{n_{i_k-1}}z(m)=x_{i_k}$ converges as $k\rightarrow\infty$. On the other hand, we claim that the sequence $S^{n_{i_k-1}}z'$ does not converge. Indeed, $z'$ is constructed in the same way as $z$, given that $(n_i)$ belongs to the maximal equicontinuous factor of $(\overline{O}(z'),S)$ which is the same as the maximal equicontinuous factor of $(\overline{O}(z),S)$, we have $ {\rm Aper}((n_i))\neq \emptyset$. Now pick an $m\in {\rm Aper}((n_i))$. By Lemma \ref{Will1}(2), we have $S^{n_{i_k-1}}z'(m)=x'_{i_k}$  for all $k\geq k_0$. By our assumption $(x'_{i_k})$ diverges as $k$ goes to infinity, so $S^{n_{i_k-1}}z'$ diverges. Thus, $(S^nz)$ and $(S^nz')$ have different topological types. By Lemma \ref{min}, there is no isomorphism from $\overline{O}(z)$ to $\overline{O}(z')$ sending $z$ to $z'$ and by Theorem \ref{Main1},  $\overline{O}(z)$ and $\overline{O}(z')$ are not conjugate.

            (2)$\Rightarrow$(1). Suppose $(x_i)$ and $(x_i')$ have the same topological type. We will show that $(S^{n}z)$ and $(S^nz')$ have the same topological type. Suppose $S^{n_k}z$ converges to $y$ for an increasing $(n_k)$. The  ${\rm Per}_{p_i}(S^{n_k}z)$ and ${\rm Per}_{p_i}(S^{n_k}z')$ are the same for all $i\in \mathbb{N}$ since they are constructed from the same fast growing sequence. Hence, for all $m\in {\rm Per}(y)$, the sequence $S^{n_k}z'(m)$ is eventually constant. Now for any $m\in {\rm Aper}(y)$, if  $S^{n_k}z(m)=x_{i_k}$, then $S^{n_k}z'(m)=x'_{i_k}$, so, since $S^{n_k}z(m)$ converges, we have that $(x_{i_k})$ converges. But since $m\in {\rm Aper}(y)$, we have $i_k$ goes to infinity, so, $(x'_{i_k})$ converges. In conclusion, $S^{n_k}z'(m)$ converges for all $m$, so $(S^{n_k}z')$ also converges. The argument for $S^{n_k}z'$ converges implying $S^{n_k}z$ converges is symmetric. By Lemma \ref{min2}, $\overline{O}(z)$ and $\overline{O}(z')$ are conjugate. 
            \end{proof}   

           \vspace{0.5em}

            
          \begin{theorem}\label{R}
              Let $(X,d)$ be a compact metric space. The topological type relation $E_{\rm tt}(X)$ is Borel reducible to the isomorphism relation of Oxtoby subshifts in $(X^{\mathbb{Z}},S)$. In particular, the topological type relation  $E_{\rm tt}([0,1]^{\mathbb{N}})$ is Borel reducible to the isomorphism relation of minimal compact systems.
          \end{theorem}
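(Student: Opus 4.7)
The plan is to use Theorem \ref{oxtoby} as a conjugacy-classification tool. Fix once and for all a fast growing sequence $(p_i)$, say $p_i=3^i$, and define
\[
\Phi : X^\omega \to \mathcal{K}(X^{\mathbb{Z}}), \qquad \Phi((x_i)) = \overline{O}(z((p_i),(x_i))).
\]
By Theorem \ref{oxtoby}, $(x_i)$ and $(x_i')$ have the same topological type if and only if $\Phi((x_i))$ and $\Phi((x_i'))$ are conjugate Oxtoby systems, so once $\Phi$ is shown to be Borel it is the desired reduction from $E_{\rm tt}(X)$ to conjugacy of Oxtoby subshifts inside $(X^{\mathbb{Z}},S)$.

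To check that $\Phi$ is Borel I would factor it as $\Phi = \overline{O} \circ \psi$, where $\psi:X^\omega \to X^{\mathbb{Z}}$ sends $(x_i)$ to $z((p_i),(x_i))$ and $\overline{O}:X^{\mathbb{Z}}\to \mathcal{K}(X^{\mathbb{Z}})$ sends $z$ to $\overline{\{S^n z:n\in\mathbb{Z}\}}$. The map $\psi$ is continuous: by the inductive construction of an Oxtoby sequence in Section \ref{Toepliz}, for every $n\in\mathbb{Z}$ there is a fixed index $j(n)\in\mathbb{N}$, depending only on $n$ and $(p_i)$, such that $z((p_i),(x_i))(n)=x_{j(n)}$, so each coordinate projection of $\psi$ is a continuous projection of $X^\omega$. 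For $\overline{O}$, I would verify Borel-ness on a Vietoris subbase. For an open $U\subseteq X^{\mathbb{Z}}$, the preimage of $\{K:K\cap U\ne\emptyset\}$ equals $\{z:\exists n\ S^n z\in U\}=\bigcup_{n\in\mathbb{Z}} S^{-n}(U)$, which is open. For the preimage of $\{K:K\subseteq U\}$, using that $X^{\mathbb{Z}}\setminus U$ is compact, one has $\overline{O}(z)\subseteq U$ iff there exist finitely many basic open sets $V_1,\dots,V_k$ whose union covers $X^{\mathbb{Z}}\setminus U$ and each of which is disjoint from $O(z)$; this gives an $F_\sigma$ description
\[
\bigcup_{V_1,\dots,V_k}\ \bigcap_{i=1}^k \bigcap_{n\in\mathbb{Z}} S^{-n}(X^{\mathbb{Z}}\setminus V_i),
\]
the outer union being countable since the basic opens of $X^{\mathbb{Z}}$ are countable.

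For the ``in particular'' part, specialize to $X=[0,1]^{\mathbb{N}}$. Every Oxtoby system is a Toeplitz system, hence minimal and compact, so the range of $\Phi$ sits inside the Borel space of minimal subsystems of $(([0,1]^{\mathbb{N}})^{\mathbb{Z}},S)$ introduced in the Preliminaries, and isomorphism there is precisely conjugacy. Thus the same $\Phi$ witnesses $E_{\rm tt}([0,1]^{\mathbb{N}})\le_B$ conjugacy of minimal compact systems.

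Conceptually nothing in this argument is hard, since Theorem \ref{oxtoby} has already absorbed the entire dynamical content; the only real obstacle is the bookkeeping needed to verify the Borel-ness of the orbit-closure map. If one prefers to avoid that verification, an equivalent route is to send $(x_i)$ to the pointed system $(\overline{O}(z((p_i),(x_i))),z((p_i),(x_i)))\in \mathcal{K}(X^{\mathbb{Z}})\times X^{\mathbb{Z}}$ and appeal directly to the standard Borel structure on pointed minimal systems already discussed in Section 2, extracting the unpointed orbit closure via a Borel uniformization.
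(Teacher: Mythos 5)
Your proposal is correct and takes essentially the same route as the paper: fix a fast growing sequence once and for all, send $(x_i)$ to $\overline{O}(z((p_i),(x_i)))$, and invoke Theorem \ref{oxtoby} to see that topological type is exactly the conjugacy class. The paper leaves the Borelness of this assignment implicit, and your verification (coordinatewise continuity of $(x_i)\mapsto z((p_i),(x_i))$ together with the Vietoris subbasis computation for the orbit-closure map) correctly supplies that routine detail.
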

          \begin{proof}
              This follows from Theorem \ref{oxtoby}, we fix a fast growing sequence $(p_i)$ for Oxtoby sequences. We send a sequence $(x_i)$ in $X$ to the Oxtoby system $(\overline{O}(z((p_i),(x_i))$. By Theorem  \ref{oxtoby}, the topological type of $(x_i)$ determines the isomorphism type of $(\overline{O}(z),(x_i),(p_i))$, which ends the proof.
          \end{proof}
          
           
          \begin{theorem}\label{BR}
              The isomorphism relation of pointed minimal compact systems is Borel bireducible with the equivalence relation $E_{\rm tt}([0,1]^{\mathbb{N}})$.
          \end{theorem}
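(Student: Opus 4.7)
The plan is to establish Borel reductions in both directions using the material already developed in Sections \ref{Toepliz}--\ref{Borel reduction}.

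For the reduction from pointed minimal compact systems to $E_{\rm tt}([0,1]^{\mathbb{N}})$, I would use the realization of pointed minimal compact systems as pointed minimal subshifts $(K, S, x)$ of $(([0,1]^{\mathbb{N}})^{\mathbb{Z}}, S)$, and send such a pair to the sequence
$$
(x,\, Sx,\, x,\, S^2 x,\, x,\, S^3 x,\, x,\, S^4 x,\, \ldots).
$$
Since $([0,1]^{\mathbb{N}})^{\mathbb{Z}}$ is itself a Hilbert cube (hence Borel-isomorphic to $[0,1]^{\mathbb{N}}$), the target can be treated as an element of $([0,1]^{\mathbb{N}})^{\omega}$. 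The map is manifestly Borel, and Corollary \ref{BRBR} is exactly the statement that this is a reduction.

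For the reduction in the reverse direction, fix once and for all a fast growing sequence $(p_i)$ and send $(x_i) \in ([0,1]^{\mathbb{N}})^{\omega}$ to the pointed Oxtoby system
$$
\bigl(\overline{O}(z((p_i), (x_i))),\, S,\, z((p_i), (x_i))\bigr).
$$
Borelness follows from the inductive, coordinate-by-coordinate definition of $z((p_i), (x_i))$. To show this is a reduction, I would split according to $|L((x_i))|$. If both $L((x_i))$ and $L((x'_i))$ are singletons, then both sequences are convergent, hence automatically topologically equivalent, and by Proposition \ref{SIN} both Oxtoby systems are conjugate to the same odometer; since the odometer action is transitive, any two base points can be matched, yielding a pointed conjugacy. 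If exactly one of the two $L$'s is a singleton, the two sequences have obviously different topological types and by Theorem \ref{oxtoby} the systems are not conjugate (let alone pointed-conjugate). In the remaining case, when both $L$'s have at least two points, Theorem \ref{oxtoby} already gives that the unpointed Oxtoby systems are conjugate iff the sequences share topological type, and Theorem \ref{Main1} guarantees that any unpointed conjugacy $f$ satisfies $f(z((p_i),(x_i))) = S^m z((p_i),(x'_i))$ for some $m \in \mathbb{Z}$; then $S^{-m} \circ f$ is a conjugacy sending the distinguished base point to the distinguished base point, i.e.\ a pointed conjugacy. The converse direction is free: a pointed conjugacy is a fortiori an unpointed one.

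I do not expect a substantial obstacle here, since Theorem \ref{Main1} has already done the heavy lifting. The only care needed is to choose a clean embedding of pointed minimal compact systems into a standard Borel space and to handle the trichotomy on $|L((x_i))|$ without gaps.
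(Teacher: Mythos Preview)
Your proposal is correct and follows essentially the same route as the paper: the reduction from pointed minimal compact systems to $E_{\rm tt}([0,1]^{\mathbb{N}})$ via the sequence $(x,Sx,x,S^2x,\ldots)$ and Corollary~\ref{BRBR}, and the reduction in the other direction via the pointed Oxtoby system together with Theorems~\ref{oxtoby} and~\ref{Main1}. Your explicit trichotomy on $|L((x_i))|$ is in fact a bit more careful than the paper's write-up, which invokes Theorem~\ref{Main1} without isolating the singleton case (where Theorem~\ref{Main1} does not apply and one instead uses Proposition~\ref{SIN} plus transitivity of the odometer, exactly as you do).
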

          \begin{proof}
              We first show $E_{\rm tt}([0,1]^{\mathbb{N}})$ is Borel reducible to the isomorphism relation of pointed minimal compact systems. We send a sequence $(x_i)$ in the Hilbert cube to 
              $$
              (\overline{O}(z((p_i),(x_i))),S,z((p_i),(x_i))).
              $$  
              The Borelness of the map is routine to check. By Theorem \ref{oxtoby}, the topological type of the sequence determines the isomorphism type of Oxtoby systems. Also, by Theorem \ref{Main1}, two Oxtoby systems are conjugate if and only if there is an isomorphism sending the Oxtoby sequence to the other Oxtoby sequence. Thus, this is a Borel reduction.
              
              Now we show the isomorphism relation of pointed minimal compact systems is Borel reducible to $E_{\rm tt}([0,1]^{\mathbb{N}})$. For a pointed minimal compact system $(X,f,x)$, we send it to the sequence $(x,f(x),x,f^2(x),x,f^3(x),...\dots)$, by Corollary \ref{BRBR}, this is a Borel reduction.
               
              
          \end{proof}

          Now we prove $E_{{\rm tt}}([0,1]^{\mathbb{N}})$ is not Borel reducible to any $S_{\infty}$-action. We consider the action of $c_0$ on $(S^1)^{\mathbb{N}}$ defined by
          $$
          g\circ (x_n)= (g(n)+x_n)
          $$
          where $g\in c_0$ and $x_n\in S^1$ and denote this action by $E^{c_0}_{(S^1)^{\mathbb{N}}}$. This action is turbulent \cite[Theorem 10.5.2]{Gao} thus not classifiable by countable structures.
            
         \begin{theorem}\label{Turbulent}
         The equivalence relation $E_{\rm tt}([0,1]^{\mathbb{N}})$ is not Borel reducible to any $S_{\infty}$-actions.
         \end{theorem}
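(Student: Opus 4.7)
The plan is to produce a Borel reduction from the orbit equivalence relation $E^{c_0}_{(S^1)^{\mathbb{N}}}$ of the $c_0$-action to $E_{\rm tt}([0,1]^{\mathbb{N}})$. The theorem then follows at once: by the hypothesis just stated, $E^{c_0}_{(S^1)^{\mathbb{N}}}$ is turbulent and hence not Borel-reducible to any Borel $S_{\infty}$-action; composing reductions transfers the same obstruction to $E_{\rm tt}([0,1]^{\mathbb{N}})$.

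Concretely, I would construct a Borel map $\Phi \colon (S^1)^{\mathbb{N}} \to ([0,1]^{\mathbb{N}})^{\mathbb{N}}$ satisfying $(x-y)\in c_0$ iff $\Phi(x)\,E_{\rm tt}\,\Phi(y)$. The forward direction should be arranged at the construction stage: I would require $\Phi$ to depend on the $x_m$ variable in a way that is uniformly continuous across $m$, so that $x - y \in c_0$ implies $d(\Phi(x)_m,\Phi(y)_m) \to 0$ in the product metric on $[0,1]^{\mathbb{N}}$. This is enough to force matching convergence along any subsequence $(m_k)$: if $\Phi(x)_{m_k}\to v$ then $\Phi(y)_{m_k}\to v$ as well, and vice versa.

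The main obstacle is the backward direction: if $\Phi(x)$ and $\Phi(y)$ have the same topological type, then $x-y\in c_0$. The subtle issue is that topological type is a rather coarse invariant, recording only which subsequences converge and the homeomorphism type of the $\omega$-limit set (Fact \ref{TT implies homeo}), not the specific point of $[0,1]^{\mathbb{N}}$ at which the limit sits. For instance, the naive identity map $\Phi(x)_m = x_m$, or the shift-tail encoding $\Phi(x)_m = (x_m, x_{m+1}, x_{m+2}, \ldots)$, both fail: two distinct constants $x \equiv c$ and $y \equiv c'$ are not $c_0$-equivalent yet both images are constant sequences in the target with singleton (hence homeomorphic) $\omega$-limit sets. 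Accordingly, $\Phi$ must inject enough of the absolute-positional data of $x$ into the topological structure of its image so that distinct $c_0$-orbits yield topologically distinguishable limit behavior, while remaining tame enough under $c_0$-translations to preserve the forward direction.

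With such a $\Phi$ in hand, the backward direction is proved by contradiction: assuming $x-y\notin c_0$, one extracts a subsequence $(n_k)$ with $d_{S^1}(x_{n_k},y_{n_k})\ge\epsilon$, applies compactness and a diagonal argument to refine to a subsequence along which $x_{n_k}\to a$ and $y_{n_k}\to b$ with $a\ne b$, and then exploits the auxiliary encoding in $\Phi$ to produce a further subsequence along which $\Phi(x)$ converges while $\Phi(y)$ does not. The heart of the proof will lie in choosing the encoding carefully enough that this final extraction can always be carried out when $x\not\sim_{c_0}y$, while keeping $\Phi$ forward-compatible under $c_0$-translations; once this is accomplished, combining with the turbulence of $E^{c_0}_{(S^1)^{\mathbb{N}}}$ closes the argument.
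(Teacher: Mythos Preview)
Your overall strategy matches the paper's exactly: reduce the turbulent $c_0$-action $E^{c_0}_{(S^1)^{\mathbb{N}}}$ to $E_{\rm tt}([0,1]^{\mathbb{N}})$, then invoke Hjorth's theorem. You have also correctly diagnosed why the naive encodings fail and what property $\Phi$ must have.

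The genuine gap is that you never actually define $\Phi$. Your proposal ends by saying ``the heart of the proof will lie in choosing the encoding carefully enough,'' which is precisely the step that constitutes the proof. Everything else is scaffolding; without the construction, nothing has been shown.

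The paper's construction is short and worth knowing: fix a countable dense sequence $(q_n)$ in $S^1$ and send $x=(x_n)$ to the interleaved sequence $y$ with $y_{2k}=x_k$ and $y_{2k+1}=q_k$; then embed $S^1$ into $[0,1]^{\mathbb{N}}$. The dense sequence $(q_n)$ is the ``auxiliary encoding'' you were looking for. For the forward direction, if $x-x'\in c_0$ then along any subsequence the odd-indexed terms of $y$ and $y'$ agree and the even-indexed terms differ by a null sequence, so convergence matches. For the backward direction, if $x-x'\notin c_0$, pass to a subsequence along which $x_{n_k}\to a$ and $x'_{n_k}\to b$ with $a\ne b$; since $(q_n)$ is dense, pick indices $m_j$ with $q_{m_j}\to a$, and interleave the positions $2n_k$ with $2m_j+1$ to get an increasing sequence along which $y$ converges to $a$ while $y'$ oscillates between $a$ and $b$. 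This is exactly the mechanism you sketched in your final paragraph, but it only works because the fixed dense sequence supplies anchor points at every possible limit value --- the idea your outline is missing.
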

         \begin{proof}
              We first show that $E^{c_0}_{(S^1)^{\mathbb{N}}}$ is Borel reducible to $E_{{\rm tt}}(S^1)$. We fix a countable dense subset of $S^1$ as $(q_n)$. Then send a sequence $x=(x_n)$ in $(S^1)^{\mathbb{N}}$ to $y=(y_n)$ where $y_{2k}=x_k$ and $y_{2k+1}=q_k$ for all $k\in \omega$. It is routine to check this is a Borel reduction. 
         
             Now let $i$ be an embedding from $S^1$ to the Hilbert cube, it is straightforward to check the map sending a sequence $(x_n)$ in $(S^1)^{\mathbb{N}}$ to $(i(x_n))$ is a Borel reduction from $E_{{\rm tt}}(S^1)$ to $E_{{\rm tt}}([0,1]^{\mathbb{N}})$. Thus, $E^{c_0}_{(S^1)^{\mathbb{N}}}\leq_B E_{{\rm tt}}([0,1]^{\mathbb{N}})$ and since $E^{c_0}_{(S^1)^{\mathbb{N}}}$ is not Borel reducible to any Borel $S_{\infty}$-action by \cite[Theorem 10.5.2]{Gao}, this ends the proof.
         \end{proof}
          Now we can prove the main theorems in the paper.
        \begin{proof}[Proof of Theorems \ref{CM} and \ref{PCM}]
            This follows from Theorem \ref{R}, Theorem \ref{BR} and Theorem \ref{Turbulent}.
        \end{proof}

         \begin{proof}[Proof of Theorem \ref{FCM}]
              Note that Oxtoby systems are conjugate to their inverses, so two Oxtoby systems are flip conjugate if and only if they are conjugate. So the theorem follows from Theorem \ref{R} and Theorem \ref{Turbulent}.
         \end{proof} 

         \section{The equivalence relation $E_{\rm csc}$}\label{Ding Gu}
         In \cite{DG}, Ding and Gu introduced the following equivalence relation. 
         \begin{definition}
            Let $\mathbb{X}_{\rm cpt}$ be the set of all metrics $r$ on $\omega$ such that the completion of $(\omega,r)$ is compact. The equivalence relation $E_{\rm csc}$ on $\mathbb{X}_{\rm cpt}$ is defined as $rE_{\rm csc}s$ iff the set of Cauchy sequences in $(\omega,r)$ is same as that in $(\omega,s)$.
         \end{definition}
        The next theorem uses the notion of $Z$-sets, we refer the reader to \cite{Mill} or \cite{Chapman} for the definition.
        
        Denote by $Z([0,1]^{\mathbb{N}})$ the set of all sequences in the Hilbert cube whose closure is a $Z$-set and by ${\rm Homeo}([0,1]^{\mathbb{N}})$ the homeomorphism group of $([0,1]^{\mathbb{N}})$. The group ${\rm Homeo}([0,1]^{\mathbb{N}})$ act naturally on $Z([0,1]^{\mathbb{N}})$:
            $$
             f\circ (x_n)=(f(x_n)).
            $$
          \begin{theorem}\label{hilberttt}
              The equivalence relation $E_{\rm tt}([0,1]^{\mathbb{N}})$ is bi-reducible with the relation $E_{\mathrm{csc}}$.
          \end{theorem}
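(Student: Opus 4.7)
My plan is to construct Borel reductions in both directions, using in each case a natural identification between sequences in $[0,1]^{\mathbb{N}}$ and metrics on $\omega$ whose completion is compact. The bridge between the two sides is the following observation: for an injective sequence $(a_n)$ in a compact metric space with metric $d$, a sequence $(m_k)\in\omega^{\omega}$ is Cauchy in the induced metric $\rho(m,k):=d(a_m,a_k)$ on $\omega$ if and only if $(m_k)$ is either eventually constant or satisfies $m_k\to\infty$ with $(a_{m_k})$ convergent.

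For $E_{\rm tt}([0,1]^{\mathbb{N}})\le_B E_{\rm csc}$, given $(x_n)$ in the Hilbert cube I would first replace it by an injective sequence of the same topological type by setting $\tilde{x}_n:=(x_n,e_n)\in[0,1]^{\mathbb{N}}\times[0,1]^{\mathbb{N}}\cong[0,1]^{\mathbb{N}}$, where $(e_n)$ is a fixed null-sequence of pairwise distinct points (take $e_n$ concentrated on the $n$-th coordinate with value $1/(n+1)$). The assignment
\[
(x_n)\longmapsto r_{(x_n)},\qquad r_{(x_n)}(m,k):=d(\tilde{x}_m,\tilde{x}_k),
\]
with $d$ a fixed compatible metric on $[0,1]^{\mathbb{N}}$, is then Borel and lands in $\mathbb{X}_{\rm cpt}$, since the completion of $(\omega,r_{(x_n)})$ is the compact set $\overline{\{\tilde{x}_n\}}$. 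By the bridge observation, the $r_{(x_n)}$-Cauchy sequences are exactly those $(m_k)$ that are eventually constant or satisfy $m_k\to\infty$ and $(x_{m_k})$ converges. A short subsequence-extraction argument using Fact~\ref{TT implies homeo} then shows $r_{(x_n)}E_{\rm csc}r_{(y_n)}$ iff for all strictly increasing $(n_k)$, $(x_{n_k})$ converges iff $(y_{n_k})$ converges, i.e.\ iff $(x_n)E_{\rm tt}([0,1]^{\mathbb{N}})(y_n)$.

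For $E_{\rm csc}\le_B E_{\rm tt}([0,1]^{\mathbb{N}})$, given $r\in\mathbb{X}_{\rm cpt}$ with completion $(\hat\omega,\hat r)$, I would set $C(r):=1+\sup_{n,k}r(n,k)$ (finite by compactness) and define $x^r_n\in[0,1]^{\omega}$ by $x^r_n(k):=r(n,k)/C(r)$. This is the restriction to $\omega$ of the Kuratowski-style embedding $\hat\omega\hookrightarrow[0,1]^{\omega}$, $y\mapsto(\hat r(y,k)/C(r))_k$, so $(x^r_n)$ is injective and its closure in the Hilbert cube is homeomorphic to $\hat\omega$. The map $r\mapsto(x^r_n)$ is plainly Borel. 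The key computation is that $(m_k)$ is $r$-Cauchy iff $(x^r_{m_k})$ converges in the Hilbert cube: forwards, $|r(m_k,j)-r(m_{k'},j)|\le r(m_k,m_{k'})\to 0$ gives coordinatewise convergence; backwards, if $(m_k)$ were not Cauchy then by compactness two subsequences would accumulate at distinct $a,b\in\hat\omega$, and by density of $\omega$ in $\hat\omega$ one can find $j\in\omega$ with $\hat r(a,j)\neq\hat r(b,j)$, contradicting coordinatewise convergence. One then concludes $rE_{\rm csc}s$ iff $(x^r_n)E_{\rm tt}([0,1]^{\mathbb{N}})(x^s_n)$ via the same subsequence-extraction argument.

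The main obstacle in both reductions is bridging the different quantifier structures: $E_{\rm csc}$ ranges over all $(m_k)\in\omega^{\omega}$ while $E_{\rm tt}$ is defined only for strictly increasing $(n_k)$. This is handled by the injectivity of the sequences involved (ensured by the null-sequence trick in the first reduction and by the metric axioms $r(n,n)=0<r(n',n)$ in the second), which forces every Cauchy sequence of indices to be either eventually constant or to tend to $\infty$; the latter case is reduced to the increasing case by extracting monotone sub-subsequences and invoking Fact~\ref{TT implies homeo} to guarantee that limits along all such extractions agree.
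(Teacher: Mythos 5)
Your first reduction, $E_{\rm tt}([0,1]^{\mathbb{N}})\le_B E_{\rm csc}$, is correct and is genuinely more direct than the paper's route (the paper never constructs this reduction explicitly; it goes through Theorem \ref{BR} and the Ding--Gu identification of $E_{\rm csc}$ with the action of ${\rm Homeo}([0,1]^{\mathbb{N}})$ on $Z([0,1]^{\mathbb{N}})$). The null-sequence trick does exactly what you need: since $e_n\to 0$ and the $e_n$ are pairwise distinct, no $\tilde{x}_v$ is a limit of $\tilde{x}_{n}$ along indices tending to infinity, so every $r_{(x_n)}$-Cauchy index sequence is eventually constant or tends to $\infty$, and the sub-subsequence argument via Fact \ref{TT implies homeo} closes the non-monotone case.

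The second reduction has a genuine gap, and it is located precisely at the sentence claiming that injectivity of $(x^r_n)$ ``forces every Cauchy sequence of indices to be either eventually constant or to tend to $\infty$.'' That is false: a point of $\omega$ can be a non-isolated point of the completion $\hat\omega$, i.e.\ an $\hat r$-limit of other points of $\omega$, and then there are Cauchy sequences that take one value infinitely often \emph{and} are unbounded. Concretely, let $r$ and $s$ be the metrics on $\omega$ induced by the injective sequences $a_0=0$, $a_n=1/n$ and $b_0=1$, $b_n=1/n$ ($n\ge 1$) in $[0,1]$. Every strictly increasing index sequence tends to $\infty$, so $(x^r_{n_k})$ and $(x^s_{n_k})$ both always converge; hence $(x^r_n)\,E_{\rm tt}\,(x^s_n)$. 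But the index sequence $(0,1,0,2,0,3,\dots)$ is $r$-Cauchy (since $r(0,k)=1/k\to 0$) and not $s$-Cauchy (since $s(0,k)\to 1$), so $r$ and $s$ are not $E_{\rm csc}$-equivalent. Thus your map $r\mapsto (x^r_n)$ identifies inequivalent metrics and is not a reduction. The relation $E_{\rm tt}$ only quantifies over increasing index sequences and therefore cannot see how a fixed point of $\omega$ relates to the tail; this is exactly the distinction the paper draws between Lemma \ref{min}(2)/(3) and Fact \ref{strong TT implies homeo}. The repair is the interleaving device the paper uses in its own proof: instead of $(x^r_n)$, send $r$ to the sequence listing $x^r_1,\,x^r_1,x^r_2,\,x^r_1,x^r_2,x^r_3,\dots$ paired with a vanishing injective tag $1/j$ in an extra Hilbert-cube factor. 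Then arbitrary index sequences for $r$ are simulated by strictly increasing index sequences for the image, and one recovers the ``for all $(m_k)$'' quantifier; with that modification your coordinatewise Cauchy computation does yield a correct reduction, and the whole proof becomes an elementary alternative to the paper's detour through $Z$-sets and pointed minimal systems.
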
 
     
     \begin{proof}
         In \cite[Theorem 4.2]{DG}, it is shown that the relation $E_{\mathrm{csc}}$ is Borel bi-reducible with the action of ${\rm Homeo}([0,1]^{\mathbb{N}})$ on $Z([0,1]^{\mathbb{N}})$ defined above. On the other hand, the relation $E_{\mathrm{tt}}([0,1]^{\mathbb{N}})$ is Borel bi-reducible with the the conjugacy of pointed minimal compact systems by Theorem \ref{BR}. Therefore it suffices to reduce the conjugacy of pointed minimal compact systems to the action of ${\rm Homeo}([0,1]^{\mathbb{N}})$ on $Z([0,1]^{\mathbb{N}})$ and then reduce the action of ${\rm Homeo}([0,1]^{\mathbb{N}})$ on $Z([0,1]^{\mathbb{N}})$ to $E_{\rm tt}([0,1]^{\mathbb{N}})$.

         To get the reduction from conjugacy of pointed minimal compact systems to the action of ${\rm 
  Homeo}([0,1]^{\mathbb{N}})$ on  $Z([0,1]^{\mathbb{N}})$, we fix a pointed minimal compact system $(X,\varphi,x)$. We view $X$ as a subspace of the Hilbert cube. Write $\iota\colon[0,1]^{\mathbb{N}}\to[0,1]^{\mathbb{N}}$ for an embedding of the Hilbert cube in itself as a $Z$-set. Using Lemma \ref{min}, it is routine to check that the map taking $(X,\varphi,x)$ to 
         $$
         (\iota(x),\iota(\varphi(x)),\iota(\varphi^2(x)),\ldots)\in Z([0,1]^{\mathbb{N}})
         $$
         is a Borel reduction from the conjugacy of pointed  minimal compact systems to the equivalence relation induced by the action of ${\rm Homeo}([0,1]^{\mathbb{N}})$ on $Z([0,1]^{\mathbb{N}})$.

         To find a reduction from the action of ${\rm 
  Homeo}([0,1]^{\mathbb{N}})$ on  $Z([0,1]^{\mathbb{N}})$ to $E_{\rm tt}([0,1]^{\mathbb{N}})$, we identify $[0,1]^{\mathbb{N}}$ with $[0,1]^{\mathbb{N}}\times[0,1]$. Given a sequence $(x_n)$ in $Z([0,1]^{\mathbb{N}})$, we map it to
         $$
         \theta((x_n))=((x_1,1),(x_1,\frac{1}{2}),(x_2,\frac{1}{3}),(x_1,\frac{1}{4}),(x_2,\frac{1}{5}),(x_3,\frac{1}{6})\ldots)
         $$
         (that is, at step $n+1$ we put the sequence \[(x_1,\frac{1}{n(n+1)\slash 2+1}),(x_2,\frac{1}{n(n+1)\slash 2+2}),\ldots,(x_{n+1},\frac{1}{(n+1)(n+2)
         \slash 2}))\] after $(x_{n},\frac{1}{n(n+1)\slash 2})$). If a homeomorphism of $[0,1]^{\mathbb{N}}$ maps $(x_n)$ to $(y_n)$, then clearly $\theta((x_n))$ and $\theta((y_n))$ have the same topological type.

          On the other hand, if $\theta(x_n)$ and $\theta(y_n)$ have the same topological type. Since $(\frac{1}{n})$ is a convergent sequence which will not influence the convergence of the first coordinate, in other words, the sequence
         $$
         x=(x_1,x_1,x_2,x_1,x_2,\dots)
         $$
         and 
         $$
         y=(y_1,y_1,y_2,y_1,y_2,\dots)
         $$
         have the same topological type. Now by Fact \ref{strong TT implies homeo}, the map sending the limit of any increasing convergent subsequence of $x(n)$ to the limit of the corresponding subsequence $y(n)$ is a homeomorphism between $X=\overline{\{x_n\mid n\in\mathbb{N}\}}$ and $Y=\overline{\{y_n\mid n\in\mathbb{N}\}}$ taking $x_i$ to $y_i$. Since both $X$ and $Y$ are $Z$-sets, by \cite[Theorem 6.3.4]{Mill} this homeomorphism can be extended to $[0,1]^{\mathbb{N}}$.
             \end{proof}

         \section{Other bounds on the complexity of the conjugation relation of minimal compact systems}\label{s9}

         First, we give an upper bound on the complexity in the Borel reducibility hierarchy of the conjugacy relation of minimal compact systems.      
               \begin{theorem}
                The conjugacy relation of minimal compact systems is Borel reducible to an orbit equivalence relation.
               \end{theorem}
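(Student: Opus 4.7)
The plan is to give a trivial Borel reduction from conjugacy of minimal compact systems to conjugacy of all (not necessarily minimal) compact systems, and then invoke the already-cited fact that the latter is an orbit equivalence relation. Specifically, the introduction records that by Bruin and Vejnar \cite{BH} the conjugacy relation of Hilbert cube systems is a complete orbit equivalence relation (and alternatively this can be derived from Sabok's theorem, see \cite{Sabok1}); since every compact system is conjugate to a subshift of $(([0,1]^{\mathbb{N}})^{\mathbb{Z}},S)$ via the embedding described in the preliminaries, this is the same as the conjugacy relation on the Borel set of all subshifts of $(([0,1]^{\mathbb{N}})^{\mathbb{Z}},S)$.

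Let $M$ denote the set of minimal subshifts of $(([0,1]^{\mathbb{N}})^{\mathbb{Z}},S)$ and let $M'$ denote the set of all subshifts. First I would recall from the preliminaries (or equivalently, rerun the routine $G_\delta$ computation in the Fact) that both $M$ and $M'$ are Borel subsets of $\mathcal{K}(([0,1]^{\mathbb{N}})^{\mathbb{Z}})$ with the Vietoris topology, so the inclusion map $\iota : M \hookrightarrow M'$ is Borel (indeed continuous).

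Next I would observe that $\iota$ is a reduction. Indeed, conjugacy of two subshifts $A,B$ is defined by the existence of a homeomorphism $h : A \to B$ with $h \circ S = S \circ h$, and this definition does not refer to minimality. Hence, for any two minimal subshifts $A,B$, the statements ``$A$ and $B$ are conjugate as minimal compact systems'' and ``$A$ and $B$ are conjugate as compact systems'' are literally the same statement. Combined with the Bruin--Vejnar theorem, this exhibits the conjugacy relation of minimal compact systems as Borel reducible to an orbit equivalence relation, finishing the proof.

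No real obstacle is anticipated: the whole argument rests on the external fact that conjugacy of (unrestricted) compact systems is already an orbit equivalence relation, which is cited in the introduction, and on the elementary observation that the inclusion of minimal subshifts into all subshifts is a trivial Borel reduction.
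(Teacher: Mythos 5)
Your reduction itself (the inclusion of minimal subshifts into all subshifts) is trivially correct: it is a Borel map between Borel subsets of $\mathcal{K}(([0,1]^{\mathbb{N}})^{\mathbb{Z}})$, and conjugacy of two minimal subshifts does not depend on whether one remembers they are minimal. The issue is that your argument then rests entirely on the claim that conjugacy of \emph{all} compact systems is Borel reducible to an orbit equivalence relation, and the citations you lean on do not cleanly deliver that half of the statement. The Bruin--Vejnar result concerns Hilbert cube systems, i.e.\ the conjugation action of ${\rm Homeo}([0,1]^{\mathbb{N}})$ on itself --- that relation is literally an orbit equivalence relation, so no upper bound needs proving there, but a general compact system is not a Hilbert cube system and you have given no reduction into that class. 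Likewise, the ``simple reduction'' cited from \cite{Sabok1} in the introduction establishes the \emph{hardness} direction (all orbit equivalence relations reduce to conjugacy of compact systems), not the upper bound. So the one nontrivial assertion in your proof --- that conjugacy on the space of subshifts of $(([0,1]^{\mathbb{N}})^{\mathbb{Z}},S)$ is itself reducible to a Polish group action --- is exactly the content of the theorem, merely transported from minimal systems to all systems, and you have outsourced it to a citation that, as described in this paper, does not explicitly contain it.

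The paper's proof supplies precisely that missing upper bound, by a direct construction: embed $([0,1]^{\mathbb{N}})^{\mathbb{Z}}$ into $[0,1]^{\mathbb{N}}$ as a $Z$-set via some $h$, send a subshift $X$ to $\iota(X)$ where $\iota(x)(n)=h(S^nx)$, and reduce to the orbit equivalence relation of the Polish group $G=\{f^{\mathbb{Z}}: f\in {\rm Homeo}([0,1]^{\mathbb{N}})\}$ acting diagonally on the hyperspace of subsystems. The essential point is the $Z$-set unknotting theorem: an abstract conjugacy between $h(X)$ and $h(Y)$ extends to an ambient homeomorphism of the Hilbert cube, which is what converts ``conjugate'' into ``in the same $G$-orbit.'' Note that this argument never uses minimality, so it does prove the statement for all compact systems --- meaning your detour through the larger class buys nothing: to make your proof complete you would either have to locate a precise reference for the upper bound for general compact systems, or reproduce essentially the paper's $Z$-set argument anyway.
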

               \begin{proof}
                   Let $D$ be the set of all closed subsystems of $(([0,1]^{\mathbb{N}})^{\mathbb{Z}},S)$. Note that $D$ is a Borel subset of $\mathcal{K}(([0,1]^{\mathbb{N}})^{\mathbb{Z}})
        $. For an automorphism $f\in \rm{Aut}([0,1]^{\mathbb{N}})$, we have an automorphism of $([0,1]^{\mathbb{N}})^{\mathbb{Z}}$ denoted by $f^{\mathbb{Z}}$  such that $f^{\mathbb{Z}}(x)(n)=f(x(n))$. Note that  $G=\{f^{\mathbb{Z}}|f\in\rm{Aut}([0,1]^{\mathbb{N}})\}$ is a closed subgroup of $\rm{Aut}(([0,1]^{\mathbb{N}})^{\mathbb{Z}})$ thus a Polish group. We write $E^G_D$ to denote the equivalence relation induced by the action of $G$ on $D$. We will prove that the conjugacy relation of minimal compact systems is Borel reducible to $E_D^G$. This will prove the theorem since $E^G_D$ is induced by a Polish group action.

                  First, we embed $([0,1]^{\mathbb{N}})^{\mathbb{Z}}$ to $[0,1]^{\mathbb{N}}$ as a $Z$-set by $h$. 

                  Now for a minimal subshift $(X,S)$ of $(([0,1]^{\mathbb{N}})^{\mathbb{Z}},S)$, we embed $X$ into $([0,1]^{\mathbb{N}})^{\mathbb{Z}}$ by $\iota$, where
                  $$
                  \iota(x)(n)=h(S^nx),\, \forall n\in \mathbb{Z}
                  $$
                  It is easy to check that $(\iota(X),S)$ is topologically conjugate to $(X,S)$ by $\iota$, thus the topological conjugacy of $(\iota(Y),S)$ and $(\iota(X),S)$ implies the conjugacy of $(X,S)$ and $(Y,S)$.

                  On the other hand, suppose $(X,S)$ and $(Y,S)$ are conjugate by $f$. Then $h f h^{-1}$ is a homeomorphism between $h(X)$ and $h(Y)$. Since both of them are $Z$-sets, this homeomorphism extends to an auto-homeomorphism of $[0,1]^{\mathbb{N}}$, denoted by $g$. We know that $g|_{h(X)}=h f h^{-1}$. We consider $g^{\mathbb{Z}}(\iota(X))$, each $x\in X$, 
                  $$
                  g^{\mathbb{Z}}(\iota(x))(n)=gh(S^nx)=hfh^{-1}h(S^nx)=h(S^nf(x)).
                  $$
                  Thus we have  $g^{\mathbb{Z}}(\iota(x))=\iota(f(x))\in \iota(Y)$ which implies $g^{\mathbb{Z}}(\iota(X))\subset \iota(Y)$. By a symmetric argument we get the other inclusion, thus, $g^{\mathbb{Z}}(\iota(X))= \iota(Y)$. Thus, $\iota$ is a Borel reduction.
               \end{proof}

            Now we compute the Borel complexity of the relation of conjugacy of pointed minimal compact systems.
               \begin{theorem}
                   The conjugacy relation of pointed minimal compact systems is a $\mathbf{\Pi}^0_3$ equivalence relation but not Borel reducible to any $\mathbf{\Sigma}^0_3$ equivalence relation, in particular, it is a  $\mathbf{\Pi}_3^0$-complete set.
               \end{theorem}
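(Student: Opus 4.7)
The plan is to transfer both complexity claims through the Borel bi-reducibilities of Theorem~\ref{BR} and Theorem~\ref{hilberttt}. The reduction from pointed minimal conjugacy to $E_{\rm tt}([0,1]^{\mathbb{N}})$ given by Corollary~\ref{BRBR}, namely $(X,f,x)\mapsto (x,f(x),x,f^2(x),\ldots)$, is continuous, so upper bounds transfer by continuous preimage; Borel bi-reducibility itself preserves statements of the form ``not Borel reducible to any $F$,'' so the lower bound will transfer as well.

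For the $\mathbf{\Pi}^0_3$ upper bound I will first prove the characterization that $(x_n)\,E_{\rm tt}([0,1]^{\mathbb{N}})\,(y_n)$ if and only if for every $\epsilon\in\mathbb{Q}_{>0}$ there exist $\delta\in\mathbb{Q}_{>0}$ and $K\in\mathbb{N}$ with $d(x_m,x_n)<\delta\Rightarrow d(y_m,y_n)\le\epsilon$ for all $m,n\ge K$, and symmetrically with $x,y$ swapped. The nontrivial direction uses compactness of $[0,1]^{\mathbb{N}}$: if the condition fails at some $\epsilon>0$, pick witnesses $(m_l,n_l)$ with $l\to\infty$, $d(x_{m_l},x_{n_l})\to 0$ and $d(y_{m_l},y_{n_l})\ge\epsilon$, refine to a subsequence along which $x_{m_l},x_{n_l}\to p$ and $y_{m_l}\to q$, $y_{n_l}\to q'$ with $q\ne q'$, then interleave $(m_l),(n_l)$ into one increasing sequence $(a_j)$ along which $(x_{a_j})$ converges but $(y_{a_j})$ does not, contradicting $E_{\rm tt}$. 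To extract the $\mathbf{\Pi}^0_3$ bound, introduce
\[
 f_{\delta,K}(x,y)=\sup\{d(y_m,y_n):m,n\ge K,\ d(x_m,x_n)<\delta\},
\]
which is lower-semicontinuous: any pair $(m,n)$ witnessing $f_{\delta,K}(x,y)>\epsilon$ satisfies the same strict inequalities on a neighborhood of $(x,y)$. Hence $\{f_{\delta,K}\le\epsilon\}$ is closed, and the characterization reads $\bigcap_{\epsilon}\bigcup_{\delta,K}\{f_{\delta,K}\le\epsilon\}$, an $F_{\sigma\delta}=\mathbf{\Pi}^0_3$ set. Transferring by continuous preimage, pointed minimal conjugacy is $\mathbf{\Pi}^0_3$.

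For the lower bound, by Theorem~\ref{hilberttt} pointed minimal conjugacy is Borel bi-reducible with $E_{\rm csc}$, and I will invoke \cite{DG} to conclude that $E_{\rm csc}$ is not Borel reducible to any $\mathbf{\Sigma}^0_3$ equivalence relation; this transfers to pointed minimal conjugacy by bi-reducibility. The $\mathbf{\Pi}^0_3$-completeness as a set then follows automatically: the relation lies in $\mathbf{\Pi}^0_3$, and if it were $\mathbf{\Sigma}^0_3$ it would Borel reduce to itself as a $\mathbf{\Sigma}^0_3$ equivalence relation, contradicting the lower bound; hence it lies in $\mathbf{\Pi}^0_3\setminus\mathbf{\Sigma}^0_3$, which is $\mathbf{\Pi}^0_3$-complete by Wadge's theorem for Borel sets. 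The main obstacle is the lower bound itself, where one needs the Ding--Gu non-reducibility for $E_{\rm csc}$ (or a direct replacement, e.g., via the Homeo action on $Z$-sets of Theorem~\ref{hilberttt}); the upper bound is a clean consequence of the lower semicontinuity of $f_{\delta,K}$.
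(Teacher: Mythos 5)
Your upper bound is correct and is in substance the same computation the paper performs, just routed through $E_{\rm tt}([0,1]^{\mathbb{N}})$: the paper writes the analogous $\forall\exists\forall$ uniform-continuity condition directly on the orbit pair $(S^iz,S^jz)$, $(S^iz',S^jz')$ (where no tail cutoff $K$ is needed, since a conjugacy is uniformly continuous everywhere), verifies equivalence with conjugacy via Lemma \ref{min}, and reads off $\mathbf{\Pi}^0_3$ from the shape of the formula. Your characterization of $E_{\rm tt}$ with the tail parameter $K$, the compactness/interleaving argument for its nontrivial direction, and the lower-semicontinuity observation for $f_{\delta,K}$ are all sound, and the transfer by the continuous reduction of Corollary \ref{BRBR} is legitimate. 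Your derivation of $\mathbf{\Pi}^0_3$-completeness from ``$\mathbf{\Pi}^0_3$ but not $\mathbf{\Sigma}^0_3$'' via Wadge is also fine.

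The gap is in the lower bound. You reduce it to the assertion that $E_{\rm csc}$ is not Borel reducible to any $\mathbf{\Sigma}^0_3$ equivalence relation and propose to ``invoke \cite{DG}'' for this, while yourself flagging it as the main obstacle; no argument is given, and the needed statement is not something you can take for granted from that reference. The paper closes this differently and more economically: by Kaya's theorem \cite[Theorem 2]{Kaya}, $=^+_{\mathbb{R}}$ is Borel reducible to the conjugacy relation of pointed Cantor minimal systems, hence to that of pointed minimal compact systems, and $=^+_{\mathbb{R}}$ is known not to be Borel reducible to any $\mathbf{\Sigma}^0_3$ equivalence relation \cite[Theorem 8.5.2 and Lemma 8.5.4]{Gao}. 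Since non-reducibility to $\mathbf{\Sigma}^0_3$ equivalence relations is downward-closed under $\geq_B$, the lower bound follows. If you want to keep your route through $E_{\rm csc}$, you would still need to exhibit a reduction of $=^+_{\mathbb{R}}$ (or some other relation with the known $\mathbf{\Sigma}^0_3$ obstruction) into $E_{\rm csc}$ or into $E_{\rm tt}([0,1]^{\mathbb{N}})$; as written, the lower bound is asserted rather than proved.
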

               \begin{proof}
                   Note that the set of all pointed minimal compact systems is a $G_\delta$ subset of $\mathcal{K}(([0,1]^{\mathbb{N}})^\mathbb{Z})\times ([0,1]^{\mathbb{N}})^\mathbb{Z}$. Let $d$ be a compatible metric on $([0,1]^{\mathbb{N}})^\mathbb{Z}$. For two pointed minimal compact systems $(X,S,z)$ and $(X',S,z')$, suppose $(X,S,z)$ is conjugate to $(X',S,z')$ and let $f$ be a conjugacy map. Note that since $f$ and $f^{-1}$ are uniformly continuous, we have
                   \begin{equation*}\label{eq9.2}
                     \forall n\ge1\exists m\ge1 \forall i,j\in\mathbb{Z} ((d(S^iz,S^jz)<\frac{1}{m} \rightarrow d(S^if(z),S^jf(z))\le\frac{1}{n})\wedge(d(S^if(z),S^jf(z))<\frac{1}{m}\rightarrow d(S^iz,S^jz)\le\frac{1}{n}).
                   \end{equation*}
                   This means that if two pointed  minimal compact systems $(X,S,z)$ and $(X',S,z')$ are conjugate then the following condition holds:
                   \begin{equation}\label{eq9.1}
                     \forall n\ge1\exists m\ge1 \forall i,j\in\mathbb{Z} ((d(S^iz,S^jz)<\frac{1}{m} \rightarrow d(S^iz',S^jz'\le\frac{1}{n})\wedge(d(S^iz',S^jz')<\frac{1}{m}\rightarrow d(S^iz,S^jz)\le\frac{1}{n}).
                   \end{equation}
                   
                    On the other hand, if (\ref{eq9.1}) holds, then by Lemma \ref{min}, $(X,S,z)$ is conjugate to $(X',S,z')$. So by the form of (\ref{eq9.1}), the conjugacy relation of pointed minimal compact systems is a $\mathbf{\Pi}^0_3$ equivalence relation.
                   
                   It is well-known that $=_\mathbb{R}^{+}$ is not Borel reducible to any $\mathbf{\Sigma}^0_3$ equivalence relation \cite[Theorem 8.5.2 and Lemma 8.5.4]{Gao}. By Kaya's result \cite[Theorem 2]{Kaya}, $=_{\mathbb{R}}^+$ is Borel-reducible to the conjugacy relation of pointed Cantor minimal systems thus to the conjugacy relation of pointed minimal compact systems. So the conjugacy relation of pointed minimal compact systems is not Borel reducible to any $\mathbf{\Sigma}^0_3$ equivalence relation.
               \end{proof}

               \section{Remarks and questions}

               The exact complexity of the conjugacy relation of minimal compact systems is still open. Sabok made the following conjecture:

               \vspace{0.5em}

              \noindent \textbf{Conjecture 8.1} \textit{{\rm (Sabok)} The conjugacy relation of minimal compact systems is a complete orbit equivalence relation.}

               \vspace{0.5em}

             When studying topological types of sequences, it is also natural to ask the following question:

                 \vspace{0.5em}

              \noindent \textbf{Question 8.2} Let $M_1$ and $M_2$ be two manifolds with different dimensions. Do $E_{\rm tt}(M_1)$ and $E_{\rm tt}(M_2)$ have different complexity? In particular, is $E_{\rm tt}([0,1])<_B E_{\rm tt}([0,1]^{\mathbb{N}})$?

               \vspace{0.5em}
               \,
               \,

                \noindent \textbf{Acknowledgements.} We would like to thank Su Gao and Marcin Sabok for their valuable advice. Theorem \ref{Turbulent} was suggested by Marcin Sabok and we have learnt recently that a similar argument also appears in the work of Longyun Ding and Kai Gu \cite{DG}. The first author is partly funded by NSFC grants 12250710128 and 12271263. The second author is partly funded by NSERC Discovery Grant RGPIN-2020-05445.

\end{document}